\theoremstyle{plain}
\newtheorem{thm}{Theorem}[section]
\newtheorem{lem}[thm]{Lemma}
\newtheorem{cor}[thm]{Corollary}
\newtheorem{prop}[thm]{Proposition}
\theoremstyle{definition}
\newtheorem{defn}[thm]{Definition}
\newtheorem{exs}[thm]{Examples}
\theoremstyle{remark}
\newtheorem{rem}{Remark}
\newcommand{\N}{\mathbb{N}}
\newcommand{\R}{\mathbb{R}}
\newcommand{\FF}{\mathcal{F}}
\newcommand{\HH}{\mathcal{H}}
\newcommand{\JJ}{\mathcal{J}}
\newcommand{\LL}{\mathcal{L}}
\newcommand{\RR}{\mathcal{R}}
\newcommand{\fX}{\mathfrak{X}}
\newcommand{\bH}{\mathbf{H}}
\newcommand{\bV}{\mathbf{V}}
\newcommand{\sA}{\mathsf{A}}
\newcommand{\sT}{\mathsf{T}}
\newcommand{\supp}{\operatorname{supp}}
\newcommand{\codim}{\operatorname{codim}}
\newcommand{\id}{\operatorname{id}}
\newcommand{\rnabla}{\mathring{\nabla}}
\newcommand{\rP}{\mathring{P}}
\newcommand{\rR}{\mathring{R}}
\newcommand{\rT}{\mathring{T}}
\newcommand{\rexp}{\mathring{\exp}}
\newcommand{\rGamma}{\mathring{\Gamma}}
\newcommand{\cexp}{\check{\exp}}
\newcommand{\cO}{\check{O}}
\newcommand{\cV}{\check{V}}
\newcommand{\vol}{\operatorname{vol}}
\definecolor{darkgreen}{cmyk}{1,0,1,.2}
\definecolor{m}{rgb}{1,0.1,1}
\newdimen\theight
\def\TeXref#1{%
             \leavevmode\vadjust{\setbox0=\hbox{{\tt
                     \quad\quad  {\small \textrm #1}}}%
             \theight=\ht0
             \advance\theight by \lineskip
             \kern -\theight \vbox to
             \theight{\rightline{\rlap{\box0}}%
             \vss}%
             }}%
\title{Riemannian foliations of bounded geometry}
\author[J.A. \'Alvarez L\'opez]{Jes\'us A. \'Alvarez L\'opez}
\address{Departamento de Xeometr\'{\i}a e Topolox\'{\i}a\\
         Facultade de Matem\'aticas\\
         Universidade de Santiago de Compostela\\
         15782 Santiago de Compostela\\ Spain}
\email{jesus.alvarez@usc.es}
\thanks{The first author is partially supported by the MICINN grants MTM2008-02640 and MTM2011-25656}
\author[Y.A. Kordyukov]{Yuri A. Kordyukov}
\address{Institute of Mathematics\\ Russian Academy of Sciences\\
112 Chernyshevsky street\\ 450008 Ufa\\ Russia}
\email{yurikor@matem.anrb.ru}
\thanks{The second author is partially supported by the RFBR grant 12-01-00519 and by the MICINN grants MTM2008-02640 and MTM2011-25656.}
\author[E. Leichtnam]{Eric Leichtnam}
\address{Institut de Math\'ematiques de Jussieu-PRG, CNRS,  Batiment Sophie Germain (bureau 740), Case 7012, 75205 Paris Cedex 13, France}
\email{ericleichtnam@math.jussieu.fr}
\date{\today}
\subjclass{57R30, (53C21, 53C20)}
\keywords{Riemannian foliation, bounded geometry, O'Neill tensors}
\begin{document}

\maketitle

\begin{abstract}
	Continuing the study of bounded geometry for Riemannian foliations, begun by Sanguiao, we introduce a chart-free definition of this concept. Our main theorem states that it is equivalent to a condition involving certain normal foliation charts. For this type of charts, it is also shown that the derivatives of the changes of coordinates are uniformly bounded, and there are nice partitions of unity. Applications to a trace formula for foliated flows will given in a forthcoming paper.
\end{abstract}


\section{Introduction}\label{s:intro}

Bounded geometry has played an important role in global analysis on non-compact manifolds. Recall that a Riemannian manifold $M$ is said to be of bounded geometry if it has positive injectivity radius and all covariant derivatives of its curvature tensor are uniformly bounded; in particular, $M$ is complete. It also has a chart characterization: $M$ is of bounded geometry if and only if there are normal coordinates at each point $p$, whose image is an Euclidean ball $B$ independent of $p$, and such that the corresponding Christoffel symbols $\Gamma^i_{jk}$, as a family of functions on $B$ parametrized by $i$, $j$, $k$ and $p$, lie in a bounded set of the Fr\'echet space $C^\infty(B)$. In this characterization, the Cristoffel symbols can be replaced by the metric coefficients $g_{ij}$. This was first proved by Eichhorn \cite{Eichhorn1991}. A different proof was indicated by Roe \cite{Roe1988I}, using Rauch comparison theorem and integrating the differential equations that relate the curvature and the Christoffel symbols or metric coefficients \cite[Appendix]{AtiyahBottPatodi1973}, \cite{AtiyahBottPatodi1975}. The details of this proof are rather involved and were given by Schick \cite{Schick1996,Schick2001}. 

In fact, Schick studied bounded geometry for manifolds with boundary. In this case, besides the curvature and injectivity radius of the manifold, the chart-free definition also involves the curvature, second fundamental form and injectivity radius of the boundary, and the ``normal radius'' of a geodesic collar of the boundary. Moreover, besides normal coordinates at interior points, its chart characterization also involves the coordinates given by a geodesic collar of the boundary. 

For geometric structures on open manifolds that involve a metric, one may try to give a chart-free definition of bounded geometry by requiring some condition analogous to a positive injectivity radius (well adapted to the structure), and uniform bounds of the covariant derivatives of the tensors that describe the structure (including the curvature). The corresponding chart characterization should involve some version of normal coordinates canonically associated to the geometric structure.

An example of such generalization is the case of bounded geometry for Riemannian foliations, which was introduced by Sanguiao \cite[Definition~2.7]{Sanguiao2008}. It was used to extend certain analysis on Riemannian foliations to the case of open manifolds. But Sanguiao's definition is of chart type, and a chart-free counterpart was missing. 

We give a chart-free definition of bounded geometry for Riemannian foliations in the following way. This class of foliations is characterized by being locally defined by Riemannian submersions for some metric on the ambient manifold (a bundle-like metric). These local Riemannian submersions are described by the O'Neill tensors \cite[Theorem~4]{ONeill1966}, which can be combined to define global O'Neill tensors. Thus our definition requires the existence of uniform bounds of all covariant derivatives of the curvature and O'Neill tensors, and positivity of certain leafwise and transverse versions of the injectivity radius. 

Let us give some examples of Riemannian foliations that have bounded geometry for some bundle-like metric. The lifts of Riemannian foliations on compact manifolds to any connected covering have bounded geometry. Given a connected Lie group $H$, a connected normal Lie subgroup $L\subset H$, and a discrete subgroup $\Gamma\subset H$, the induced foliation on the homogeneous space $\Gamma\backslash H$, whose leaves are the projections of the translates of $L$, has bounded geometry. Given any codimension one foliation almost without holonomy on a compact manifold, with a finite number of compact leaves with non-trivial holonomy group, its restriction to the complement of those leaves has bounded geometry.

Our main theorem (Theorem~\ref{t:bounded geometry}) establishes the equivalence of this condition to a foliation chart characterization, which uses the metric coefficients with respect to what we call normal foliation coordinates. These coordinates, centered at a point $p$, are defined by taking geodesics orthogonal to the leaves emanating from $p$, defining a local transversal, and then taking geodesics in the leaves emanating from each point of this local transversal.

The proof follows the arguments of Schick. An important ingredient in the proof is a certain version of Jacobi fields, called adapted Jacobi fields. They are infinitesimal variations of leafwise geodesics in the ambient manifold, where leafwise geodesics are geodesics of the leaves, considered as curves in the ambient manifold. The key result about adapted Jacobi fields (Proposition~\ref{p:|X|^2+|Y|^2}) states that they are bounded in terms of the initial condition, the parameter, and the curvature and O'Neill tensors.

Two more well known results about bounded geometry are generalized to our setting: it is proved that the changes of coordinates of these normal foliation charts have uniformly bounded derivatives, and there are partitions of unity with uniformly bounded derivatives subordinated to an appropriate covering by domains of normal foliation coordinates.

\section{Preliminaries}\label{s:prelim}

Let $\FF$ be a smooth foliation on a (possibly non-compact) manifold $M$, and set $n'=\codim\FF$, $n''=\dim\FF$ and $n=\dim M=n'+n''$. Let us recall and fix the following familiar terminology and notation. The leaf through some point $p$ is usually denoted by $L_p$. Let $T\FF\subset TM$ be the subbundle of vectors tangent to the leaves, which is called {\em tangent\/} or {\em vertical\/} bundle of $\FF$. Then $N\FF=TM/T\FF$ is the bundle of vectors normal to the leaves, called the {\em normal\/} bundle of $\FF$. This gives rise to the concept of {\em vertical\/}/{\em normal\/} vectors, vector fields and local frames. Let $\fX(M)$ denote the Lie algebra and $C^\infty(M)$-module of tangent vector fields on $M$. The vertical vector fields form a Lie subalgebra and $C^\infty(M)$-submodule $\fX(\FF)\subset\fX(M)$. The normal vector (or normal vector field) induced by the projection of any tangent vector (or tangent vector field) $X$ will be denoted by $\overline{X}$. For any smooth local transversal $\Sigma$ of $\FF$ through a point $p$, there is a canonical isomorphism $T_p\Sigma\cong N_p\FF$. In the sequel, the notation $\Sigma$, with possible subindices, will be used for a smooth local transversal or the local quotient of a foliation chart, or a disjoint union of such objects. 

The normal bundle $N\FF$ has a flat $T\FF$-partial connection $\nabla^\FF$ given by \label{nabla^FF on N FF} $\nabla^\FF_V\overline{X}=\overline{[V,X]}$ for $V\in\fX(\FF)$ and $X\in\fX(M)$. For each path $c$ from $p$ to $q$ in a leaf, the corresponding holonomy transformation $h_c$ is defined between smooth local transversals through $p$ and $q$, and its differential can be considered as an isomorphism $h_{c*}:N_p\FF\to N_q\FF$, called the {\em infinitesimal holonomy\/} of $c$. This $h_{c*}$ equals the $\nabla^\FF$-parallel transport along $c$. 

The normalizer $\fX(M,\FF)$ of $\fX(\FF)$ in $\fX(M)$ consists of the tangent vector fields whose flows are {\em foliated\/} in the sense that they map leaves to leaves; these vector fields are called {\em infinitesimal transformations\/} of $(M,\FF)$. The quotient Lie algebra $\overline{\fX}(M,\FF)=\fX(M,\FF)/\fX(\FF)$ can be identified with the space of the $\nabla^\FF$-parallel normal vector fields (those that are invariant by all infinitesimal holonomy transformations). The element of $\overline{\fX}(M,\FF)$ represented by any $X\in\fX(M,\FF)$ is also denoted by $\overline{X}$.

Let $\{U_a,\pi_a,h_{ab}\}$ be a defining cocycle of $\FF$; here, the sets $U_a$ are distinguished open sets (domains of foliation charts) that form a covering of $M$, the maps $\pi_a:U_a\to\Sigma_a$ are the submersions whose fibers are the plaques, which will be called {\em distinguished submersions\/}, and the maps $h_{ab}:\pi_a(U_{ab})\to\pi_b(U_{ab})$ are the elementary holonomy transformations determined by the condition $h_{ab}\pi_a=\pi_b$ on $U_{ab}=U_a\cap U_b$. Let $\HH$ be the representative of the holonomy pseudogroup on $\Sigma=\bigsqcup_a\Sigma_a$ generated by the maps $h_{ab}$. Let $\fX(\Sigma,\HH)\subset\fX(\Sigma)$ be the Lie subalgebra consisting of $\HH$-invariant vector fields. Then $\fX(M,\FF)$ consists of the vector fields $X\in\fX(M)$ that can be projected by the distinguished submersions $\pi_a$, defining an element of $\fX(\Sigma,\HH)$, also denoted by $\overline{X}$. This assignment induces an isomorphism $\overline{\fX}(M,\FF)\cong\fX(\Sigma,\HH)$, which may be considered as an identity.

It is said that $\FF$ is {\em Riemannian\/} when there is an $\HH$-invariant Riemannian metric on $\Sigma$. This is equivalent to the existence of a $\nabla^\FF$-parallel Riemannian structure on $N\FF$. It also means that there is a Riemannian metric on $M$, called {\em bundle-like\/}, which induces a metric on $\Sigma$ so that each $\pi_a:U_a\to\Sigma_a$ is a Riemannian submersion. 

From now on, $\FF$ will be assumed to be Riemannian, and $M$ equipped with a bundle-like metric $g=(\ ,\ )$. Let $\nabla$ denote its Levi-Civita connection, and $R$ its curvature. The vector subbundle $T\FF^\perp\subset TM$ is called {\em horizontal\/}, giving rise to the concepts of {\em horizontal\/} vectors, vector fields and local frames. Let $\bV:TM\to T\FF$ and $\bH:TM\to T\FF^\perp$ denote the orthogonal projections. The Levi-Civita connection of the leaves can be combined to define a $T\FF$-partial connection of $T\FF$, also denoted by \label{nabla^FF on T FF} $\nabla^\FF$. The $\HH$-invariant metric induced by $g$ on $\Sigma$ will be denoted by $\check g$, and the corresponding Levi-Civita connection and exponential map by $\check\nabla$ and $\check\exp$.

\section{O'Neill tensors and adapted connection}\label{ss:O'Neill}

Only in this section, we will use the notation $V$ and $W$ for arbitrary vertical vector fields, $X$ and $Y$ for arbitrary horizontal vector fields, and $E$, $F$, $G$ and $H$ for arbitrary vector fields. The O'Neill tensors \cite{ONeill1966} of the Riemannian submersions $\pi_a$ combine to produce $(1,2)$-tensors $\sT$ and $\sA$ on $M$; they are defined by
  \begin{align*}
    \sT_EF&=\bH\nabla_{\bV E}(\bV F)+\bV\nabla_{\bV E}(\bH F)\;,\\
    \sA_EF&=\bH\nabla_{\bH E}(\bV F)+\bV\nabla_{\bH E}(\bH F)\;.
  \end{align*}
On the other hand, the expression
  \[
    \rnabla_EF=\bV\nabla_E(\bV F)+\bH\nabla_E(\bH F)
  \]
defines a linear connection $\rnabla$ \cite{AlvTond1991}, which is said to be {\em adapted\/}. Observe that
  \begin{equation}\label{nabla-rnabla}
    \nabla_V-\rnabla_V=\sT_V\;,\quad\nabla_X-\rnabla_X=\sA_X\;.
  \end{equation}
It is easy to check that $\rnabla$ is Riemannian; thus $\rnabla=\nabla$ if and only if $\rnabla$ is torsion-free.  In fact, the torsion $\rT$ of $\rnabla$ is given by
  	\begin{gather}
		\rT(V,W)=0\;,\quad\rT(X,Y)=-2\sA_XY\;,\label{rT(V,W)}\\
    		\rT(X,V)=-\rT(V,X)=\sT_VX-\sA_XV\;.\label{rT(X,V)}
  	\end{gather}
By using~\eqref{nabla-rnabla}, and since $\nabla$ is torsion free and $\bV[X,Y]=2A_XY$ \cite[Lemma~2]{ONeill1966}, it easily follows that the curvature tensor $\rR$ of $\rnabla$ satisfies
  	\begin{align}
      		(\rR-R)(V,W)&=-(\nabla_V\sT)_W+(\nabla_W\sT)_V+[\sT_V,\sT_W]\;,\label{rR(V,W)}\\
      		(\rR-R)(X,Y)&=-(\nabla_X\sA)_Y+(\nabla_Y\sA)_X+[\sA_X,\sA_Y]
		+\sT_{\bV[X,Y]}\;,\label{rR(X,Y)}\\
      		(\rR-R)(X,V)&=-(\nabla_X\sT)_V+(\nabla_V\sA)_X-\sT_{\sT_VX}+\sA_{\sA_XV}
		+[\sA_X,\sT_V]\;.\label{rR(X,V)}
  	\end{align}
$\rR(E,F)$ preserves the subspace of vertical/horizontal vector fields because $\rnabla_E$ preserves it. Since $\rnabla$ is Riemannian, the usual arguments \cite[Chapter~4, Proposition~2.5-(c)]{doCarmo1992} show that
	\begin{equation}\label{(rR(E,F)G,H)}
		(\rR(E,F)G,H)=-(\rR(E,F)H,G)\;.
	\end{equation}
Note that
  \begin{align}
    \nabla^\FF_VW&=\rnabla_VW\;,\label{rnabla_V W}\\
    \nabla^\FF_V\overline{X}&=\overline{\rnabla_VX-\sA_XV}\;,\label{nabla^FF_V overline X}
  \end{align}
where $\nabla^\FF$ denotes the $T\FF$-partial connection on $T\FF$ and $N\FF$ defined in Section~\ref{s:prelim}. Equation~\eqref{nabla^FF_V overline X} follows by taking local references of $T\FF^\perp$ consisting of local infinitesimal transformations, and using that $\rnabla_VX=\sA_XV$ if $X$ is a horizontal infinitesimal transformation by \cite[Lemma~3]{ONeill1966}.

By~\eqref{rnabla_V W}, the $\rnabla$-geodesics that are tangent to the leaves at some point remain tangent to the leaves at every point, and they are the geodesics of the leaves. So the leaves are $\rnabla$-totally geodesic, but not necessarily $\nabla$-totally geodesic, of course. By the second equality of~\eqref{nabla-rnabla} and \cite[Lemma~2]{ONeill1966}, $\rnabla$ and $\nabla$ have the same geodesics orthogonal to the leaves.

The connection $\breve\nabla=\rnabla-\frac{1}{2}\rT$ has the same geodesics as $\rnabla$, and is torsion-free but not Riemannian. We have
  \begin{gather*}
  	\breve\nabla_VW=\nabla_VW-\sT_VW\;,\quad
	\breve\nabla_XY=\nabla_XY\;,\\
		\begin{aligned}
			\breve\nabla_XV=\nabla_XV-\frac{1}{2}(\sT_VX+\sA_XV)\;,\\
			\breve\nabla_VX=\nabla_VX-\frac{1}{2}(\sT_VX+\sA_XV)\;.
		\end{aligned}
 \end{gather*}
As usual, $\rnabla$ and $\breve\nabla$ induce connections on all tensor bundles over $M$.

Observe that the expression $\theta_XV=\bV[X,V]$ defines a differential operator $\theta:\fX(\FF)\to C^\infty(M;T\FF^{\perp\,*}\otimes T\FF)$; indeed, by using that $\nabla$ is torsion free, it follows that
  \begin{equation}\label{theta_XV}
    \theta_XV=\rnabla_XV-\sT_VX\;.
  \end{equation}

\section{Adapted exponential map}\label{s: rexp}

Let $\rexp$ be the exponential map of the geodesic spray of $\rnabla$ \cite[pp.~96--99]{Poor1981}, which can be called {\em adapted\/} exponential map. If $\rnabla$ is complete, then $\rexp$ is defined on the whole of $TM$. Observe that the exponential map of the leaves is given by the restriction of $\rexp$ to $T\FF$. 

For each $p\in M$, $\rexp$ defines a diffeomorphism of some open neighborhood $V$ of $0$ in $T_pM$ to some open neighborhood $O$ of $p$ in $M$. Suppose that $p\in U_a$. Then, by taking $V$ small enough, we can suppose that $O\subset U_a$. The map $\cexp$ restricts to a diffeomorphism of some open neighborhood $\check V$ of $0$ in $T_{\pi_a(p)}\Sigma$ to some neighborhood $\check O$ of $\pi_a(p)$ in $\Sigma$. If $V$ is again small enough, then we can assume that $\pi_{a*}(V)\subset\check V$ and $\pi_a(O)\subset\check O$. From \cite[Lemma~1-(3)]{ONeill1966}, it follows that, on $V$,
	\begin{equation}\label{pi rexp = cexp pi_*}
		\pi_a\,\rexp=\cexp\,\pi_{a*}\;.
	\end{equation} 

\begin{prop}\label{p:rexp}
	For $E,F\in V$, we have $E-F\in T_p\FF$ if and only if $\rexp E$ and $\rexp F$ are in the same plaque.
\end{prop}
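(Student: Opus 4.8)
The plan is to reduce the statement to the injectivity of $\cexp$ by means of the already-established intertwining relation \eqref{pi rexp = cexp pi_*}. Since the neighborhood $V$ was chosen so that $O=\rexp(V)\subset U_a$, both points $\rexp E$ and $\rexp F$ lie in the distinguished chart $U_a$, where by definition two points lie in the same plaque precisely when they have the same image under the distinguished submersion $\pi_a$ (whose fibers are the plaques). Thus the desired equivalence becomes
\[
    E-F\in T_p\FF \iff \pi_a(\rexp E)=\pi_a(\rexp F)\;,
\]
and it suffices to establish this reformulated version.

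For the right-hand side, I would apply \eqref{pi rexp = cexp pi_*} to rewrite $\pi_a(\rexp E)=\cexp(\pi_{a*}E)$ and $\pi_a(\rexp F)=\cexp(\pi_{a*}F)$. Because $V$ was shrunk so that $\pi_{a*}(V)\subset\check V$, both $\pi_{a*}E$ and $\pi_{a*}F$ lie in $\check V$, on which $\cexp$ restricts to a diffeomorphism onto $\check O$ and is in particular injective. Hence $\pi_a(\rexp E)=\pi_a(\rexp F)$ holds if and only if $\pi_{a*}E=\pi_{a*}F$, that is, if and only if $\pi_{a*}(E-F)=0$.

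It then remains to identify the kernel of $\pi_{a*}$ at $p$. Since $\pi_a$ is a submersion whose fiber through $p$ is the plaque, an open subset of the leaf $L_p$, its differential at $p$ has kernel equal to the tangent space of that fiber, namely $T_p\FF$. Therefore $\pi_{a*}(E-F)=0$ is equivalent to $E-F\in T_p\FF$, and chaining the three equivalences yields the claim.

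I do not expect a genuine obstacle here: the essential geometric content is precisely the intertwining \eqref{pi rexp = cexp pi_*} inherited from \cite[Lemma~1-(3)]{ONeill1966}, and everything else is bookkeeping. The only point demanding care is to keep all the relevant images inside the regions where the maps are diffeomorphisms, $\rexp$ on $V$ and $\cexp$ on $\check V$, so that ``same plaque'' can legitimately be read off as equality of $\pi_a$-values and so that the injectivity of $\cexp$ genuinely applies; these are exactly the smallness constraints on $V$ imposed in the paragraph preceding the statement.
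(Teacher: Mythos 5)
Your proof is correct and follows exactly the paper's argument: the same chain of equivalences $E-F\in T_p\FF\Leftrightarrow\pi_{a*}E=\pi_{a*}F\Leftrightarrow\cexp\,\pi_{a*}E=\cexp\,\pi_{a*}F\Leftrightarrow\pi_a\,\rexp E=\pi_a\,\rexp F$, resting on the kernel of $\pi_{a*}$ being $T_p\FF$, the injectivity of $\cexp$ on $\check V$, and \eqref{pi rexp = cexp pi_*}. Your version merely makes explicit the bookkeeping (plaques as fibers of $\pi_a$, the smallness constraints on $V$) that the paper's terser proof leaves implicit.
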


\begin{proof}
	Since $\cexp$ is injective on $\check V$ and by~\eqref{pi rexp = cexp pi_*}, we get
		\begin{multline*}
			E-F\in T_p\FF\Longleftrightarrow\pi_{a*}E=\pi_{a*}F\Longleftrightarrow\cexp\,\pi_{a*}E
			=\cexp\,\pi_{a*}F\\
			\Longleftrightarrow\pi_a\,\rexp E=\pi_a\,\rexp F\;.\qed
		\end{multline*}
\renewcommand{\qed}{}
\end{proof}

\section{Adapted Jacobi fields}\label{s:Jacobi}

Let $\gamma:[a,b]\to M$ ($a\le b$) be a fixed $\rnabla$-geodesic in some leaf of $\FF$, which will be called a {\em leafwise geodesic\/}, and let $\fX(M,\gamma)$ denote the linear space of tangent vector fields of $M$ along $\gamma$. Some $X\in\fX(M,\gamma)$ is called an {\em adapted Jacobi field\/} if
	\begin{gather}
		\rnabla_{\dot\gamma}\rnabla_{\dot\gamma}X-\rR(\dot\gamma,X)\dot\gamma-\rnabla_{\dot\gamma}(\rT(\dot\gamma,X))=0\;,\label{Jacobi}\\
		\rnabla_{\dot\gamma(a)}X-\rT(\dot\gamma(a),X(a))\in T_{\gamma(a)}\FF\;.\label{vertical}
	\end{gather}
This type of vector fields forms a linear subspace $\JJ(M,\FF,\gamma)\subset\fX(M,\gamma)$. 

\begin{prop}\label{p:vertical}
	$\rnabla_{\dot\gamma}X-\rT(\dot\gamma,X)$ is vertical for all $X\in\JJ(M,\FF,\gamma)$.
\end{prop}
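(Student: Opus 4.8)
The plan is to show that $Z := \rnabla_{\dot\gamma}X-\rT(\dot\gamma,X)$ is vertical at every parameter value by reducing the claim to uniqueness of $\rnabla$-parallel transport along $\gamma$. The first step is to recognize that the Jacobi equation~\eqref{Jacobi} is exactly the identity $\rnabla_{\dot\gamma}Z=\rR(\dot\gamma,X)\dot\gamma$: rewriting~\eqref{Jacobi} as $\rnabla_{\dot\gamma}\rnabla_{\dot\gamma}X-\rnabla_{\dot\gamma}(\rT(\dot\gamma,X))=\rR(\dot\gamma,X)\dot\gamma$, the left-hand side is $\rnabla_{\dot\gamma}Z$ by the definition of $Z$.

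The second step uses two structural features of $\rnabla$. Since $\gamma$ is a leafwise geodesic, $\dot\gamma$ is vertical, and because $\rR(\dot\gamma,X)$ preserves the subspace of vertical vectors (the fact recorded just before~\eqref{(rR(E,F)G,H)}), the vector $\rR(\dot\gamma,X)\dot\gamma$ is vertical; hence $\rnabla_{\dot\gamma}Z$ is vertical. Moreover, the defining formula $\rnabla_EF=\bV\nabla_E(\bV F)+\bH\nabla_E(\bH F)$ shows that $\rnabla_{\dot\gamma}$ commutes with the projection $\bH$, so $\rnabla_{\dot\gamma}(\bH Z)=\bH\,\rnabla_{\dot\gamma}Z=0$. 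Thus the horizontal part $\bH Z$ is $\rnabla$-parallel along $\gamma$.

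The final step invokes the initial condition~\eqref{vertical}, which states precisely that $Z(a)$ is vertical, so $\bH Z(a)=0$. A $\rnabla$-parallel field along $\gamma$ that vanishes at the initial point vanishes identically, so $\bH Z\equiv0$ and $Z=\bV Z$ is vertical on all of $[a,b]$.

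I expect this argument to be routine once the two structural facts are isolated; the only point needing explicit verification is that $\rnabla_{\dot\gamma}$ maps horizontal fields to horizontal fields, which follows directly from the defining formula for $\rnabla$ (and is the same reason $\rR(E,F)$ preserves verticality). No genuine obstacle arises, since the heart of the proof is the uniqueness of solutions to the linear parallel-transport ODE.
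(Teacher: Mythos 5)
Your proof is correct and follows essentially the same route as the paper: both reduce the Jacobi equation to the first-order identity $\rnabla_{\dot\gamma}Z=\rR(\dot\gamma,X)\dot\gamma$ with vertical right-hand side and vertical initial value, and then invoke uniqueness for the resulting linear ODE. The only difference is cosmetic: you phrase the conclusion invariantly via the $\rnabla$-parallel projection $\bH$, whereas the paper expresses the same computation in a $\rnabla$-parallel orthonormal frame adapted to the splitting $T\FF\oplus T\FF^\perp$.
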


\begin{proof}
	With $Y=\rnabla_{\dot\gamma}X-\rT(\dot\gamma,X)$, the conditions~\eqref{Jacobi} and~\eqref{vertical} become
		\begin{equation}\label{Y}
			\rnabla_{\dot\gamma}Y-\rR(\dot\gamma,X)\dot\gamma=0\;,\quad Y(a)\in T_{\gamma(a)}\FF\;.
		\end{equation}
	Let $\{e_1,\dots,e_n\}$ be a $\rnabla$-parallel orthonormal frame of $TM$ along $\gamma$ so that $e_i$ is vertical if $i>n'$. Write\footnote{In products, we use the convention that repeated indices are summed.} $Y=Y^ie_i$ with $Y^i\in C^\infty([a,b])$. Since $\rR(\dot\gamma,X)\dot\gamma$ is vertical, it follows from~\eqref{Y} that $\dot Y^i=0$ and $Y^i(a)=0$ if $i\le n'$. So $Y^i=0$ for $i\le n'$, obtaining that $Y$ is vertical.
\end{proof}

\begin{cor}\label{c:nabla^FF_dot gamma overline X=0}
	$\nabla^\FF_{\dot\gamma}\overline X=0$ for all $X\in\JJ(M,\FF,\gamma)$.
\end{cor}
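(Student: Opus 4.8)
The plan is to reduce the corollary directly to Proposition~\ref{p:vertical} by rewriting the leafwise derivative $\nabla^\FF_{\dot\gamma}\overline X$ in terms of the adapted connection. Since $\gamma$ lies in a leaf, $\dot\gamma$ is vertical, so $\dot\gamma$ plays the role of the vertical argument in~\eqref{nabla^FF_V overline X}. Because $\overline X=\overline{\bH X}$, I would apply~\eqref{nabla^FF_V overline X} with the horizontal field $\bH X$ in place of the horizontal argument, expecting
\[
	\nabla^\FF_{\dot\gamma}\overline X=\overline{\rnabla_{\dot\gamma}(\bH X)-\sA_{\bH X}\dot\gamma}\;.
\]
First I would justify this identity along the curve: formula~\eqref{nabla^FF_V overline X} is an equality of fields on $M$, but the left-hand side depends only on $\overline X|_\gamma$ (and $\dot\gamma$), $\rnabla_{\dot\gamma}(\bH X)$ only on $\bH X|_\gamma$, and $\sA_{\bH X}\dot\gamma$ is pointwise, so the identity descends to vector fields along $\gamma$.

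Next I would use that $\rnabla$ preserves the vertical--horizontal splitting, which is immediate from its defining formula $\rnabla_EF=\bV\nabla_E(\bV F)+\bH\nabla_E(\bH F)$: this yields $\rnabla_{\dot\gamma}(\bH X)=\bH\rnabla_{\dot\gamma}X$, hence $\overline{\rnabla_{\dot\gamma}(\bH X)}=\overline{\rnabla_{\dot\gamma}X}$ since $\overline{\,\cdot\,}$ kills the vertical part. Then I would compute the torsion term by decomposing $X=\bV X+\bH X$ and invoking~\eqref{rT(V,W)} and~\eqref{rT(X,V)} with $V=\dot\gamma$, obtaining
\[
	\rT(\dot\gamma,X)=\sA_{\bH X}\dot\gamma-\sT_{\dot\gamma}(\bH X)\;.
\]
Inspecting the definitions of $\sA$ and $\sT$ shows that the first summand is horizontal and the second vertical, so $\overline{\rT(\dot\gamma,X)}=\overline{\sA_{\bH X}\dot\gamma}$.

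These two computations feed into Proposition~\ref{p:vertical}. Since $\rnabla_{\dot\gamma}X-\rT(\dot\gamma,X)$ is vertical, applying $\overline{\,\cdot\,}$ gives $\overline{\rnabla_{\dot\gamma}X}=\overline{\rT(\dot\gamma,X)}=\overline{\sA_{\bH X}\dot\gamma}$, and substituting into the displayed expression for $\nabla^\FF_{\dot\gamma}\overline X$ leaves $\overline{\sA_{\bH X}\dot\gamma}-\overline{\sA_{\bH X}\dot\gamma}=0$. The only genuinely delicate point is the bookkeeping of the horizontal and vertical projections --- ensuring that~\eqref{nabla^FF_V overline X} is applied with the correct horizontal argument $\bH X$ and that each tensor term is assigned to the right summand of $TM=T\FF\oplus T\FF^\perp$; I expect no analytic difficulty, only careful tensor algebra, after which the statement is a one-line consequence of Proposition~\ref{p:vertical}.
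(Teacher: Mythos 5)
Your proof is correct and takes essentially the same route as the paper's: both decompose $X=\bV X+\bH X$, compute $\rT(\dot\gamma,X)=\sA_{\bH X}\dot\gamma-\sT_{\dot\gamma}\bH X$ from~\eqref{rT(V,W)} and~\eqref{rT(X,V)}, and combine Proposition~\ref{p:vertical} with~\eqref{nabla^FF_V overline X}. The only difference is organizational: the paper equates the horizontal components of the vertical field $\rnabla_{\dot\gamma}X-\rT(\dot\gamma,X)$ to get $\rnabla_{\dot\gamma}\bH X-\sA_{\bH X}\dot\gamma=0$ before projecting, whereas you pass to $N\FF$ first and cancel there --- since $\overline{\,\cdot\,}$ is an isomorphism on $T\FF^\perp$, these are the same argument.
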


\begin{proof}
	By~\eqref{rT(V,W)} and~\eqref{rT(X,V)},
		\begin{align*}
			\rnabla_{\dot\gamma}X-\rT(\dot\gamma,X)
			&=\rnabla_{\dot\gamma}\bV X+\rnabla_{\dot\gamma}\bH X-\rT(\dot\gamma,\bV X)-\rT(\dot\gamma,\bH X)\\
			&=\rnabla_{\dot\gamma}\bV X+\rnabla_{\dot\gamma}\bH X
			-\sA_{\bH X}\,\dot\gamma+\sT_{\dot\gamma}\bH X\;,
		\end{align*}
	where $\rnabla_{\dot\gamma}\bV X$ and $\sT_{\dot\gamma}\bH X$ are vertical, and $\rnabla_{\dot\gamma}\bH X$ and $\sA_{\bH X}\,\dot\gamma$ are horizontal. So $\rnabla_{\dot\gamma}\bH X-\sA_{\bH X}\,\dot\gamma=0$ by Proposition~\ref{p:vertical}, giving $\nabla^\FF_{\dot\gamma}\overline X=\nabla^\FF_{\dot\gamma}\overline{\bH X}=0$ by~\eqref{nabla^FF_V overline X}.
\end{proof}

\begin{cor}\label{c:|sH X| is constant}
	The function $|\bH X|$ is constant for all $X\in\JJ(M,\FF,\gamma)$.
\end{cor}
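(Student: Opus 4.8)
The plan is to deduce this immediately from Corollary~\ref{c:nabla^FF_dot gamma overline X=0} together with the Riemannian structure on the normal bundle. Recall from Section~\ref{s:prelim} that, since $\FF$ is Riemannian and $g$ is bundle-like, the induced Euclidean structure on $N\FF$ is $\nabla^\FF$-parallel, and the bundle projection $TM\to N\FF$ restricts to a fiberwise isometry $T\FF^\perp\to N\FF$. In particular $\overline X=\overline{\bH X}$ and $|\bH X|=|\overline X|$ at every point of $\gamma$, so it is enough to show that $|\overline X|$ is constant along $\gamma$.

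Since $\gamma$ lies in a leaf, $\dot\gamma$ is vertical and the partial connection $\nabla^\FF_{\dot\gamma}$ is defined along $\gamma$; because the metric of $N\FF$ is $\nabla^\FF$-parallel, I would compute
\[
  \frac{d}{dt}\,|\overline X|^2=2\,(\nabla^\FF_{\dot\gamma}\overline X,\overline X)\;.
\]
By Corollary~\ref{c:nabla^FF_dot gamma overline X=0} the right-hand side vanishes, whence $|\overline X|$, and therefore $|\bH X|$, is constant.

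Alternatively, one can argue directly: from the proof of Corollary~\ref{c:nabla^FF_dot gamma overline X=0} we have $\rnabla_{\dot\gamma}\bH X=\sA_{\bH X}\dot\gamma$, and since $\rnabla$ is Riemannian, $\tfrac{d}{dt}|\bH X|^2=2(\sA_{\bH X}\dot\gamma,\bH X)$; the skew-symmetry of $\sA_{\bH X}$ together with the identity $\sA_{\bH X}\bH X=0$ from \cite{ONeill1966} then give $(\sA_{\bH X}\dot\gamma,\bH X)=-(\dot\gamma,\sA_{\bH X}\bH X)=0$. The only point needing care in either route is purely bookkeeping: confirming that the norm $|\bH X|$ computed with $g$ agrees with the norm of $\overline X$ in the $\nabla^\FF$-parallel metric on $N\FF$ (equivalently, that the relevant O'Neill identities are the pointwise tensorial ones). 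There is no genuine analytic obstacle here; this is a short corollary of the preceding results.
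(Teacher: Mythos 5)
Your first argument is exactly the paper's proof: Corollary~\ref{c:nabla^FF_dot gamma overline X=0} plus the fact that the metric induced by $g$ on $N\FF$ is $\nabla^\FF$-parallel (with the pointwise identification $|\bH X|=|\overline X|$ via the isometry $T\FF^\perp\cong N\FF$), so the proposal is correct and takes essentially the same approach. Your alternative computation is just an unwinding of the same facts, since the identity $\rnabla_{\dot\gamma}\bH X=\sA_{\bH X}\dot\gamma$ it relies on is precisely what Proposition~\ref{p:vertical} delivers in the proof of Corollary~\ref{c:nabla^FF_dot gamma overline X=0}, and the skew-symmetry of $\sA_{\bH X}$ together with $\sA_{\bH X}\bH X=0$ from \cite{ONeill1966} correctly closes it.
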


\begin{proof}
	This follows from Corollary~\ref{c:nabla^FF_dot gamma overline X=0} since the metric induced by $g$ on $N\FF$ is $\nabla^\FF$-parallel.
\end{proof}

\begin{prop}\label{p:existence of Jacobi}
	For all $X_0\in T_{\gamma(a)}M$ and $Y_0\in T_{\gamma(a)}\FF$, there is a unique $X\in\JJ(M,\FF,\gamma)$ such that 
		\begin{equation}\label{initial conditions}
			X(a)=X_0\;,\quad\rnabla_{\dot\gamma(a)}X-\rT(\dot\gamma(a),X_0)=Y_0\;.
		\end{equation}
\end{prop}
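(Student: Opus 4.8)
The plan is to recognize equation~\eqref{Jacobi} as a second-order linear ODE system for $X$ along $\gamma$ and invoke the standard existence--uniqueness theorem for linear ODEs, once the geometric conditions~\eqref{initial conditions} are translated into genuine initial data $X(a)$ and $\rnabla_{\dot\gamma(a)}X$. First I would choose a $\rnabla$-parallel orthonormal frame $\{e_1,\dots,e_n\}$ of $TM$ along $\gamma$, as in the proof of Proposition~\ref{p:vertical}, and write $X=X^ie_i$ with $X^i\in C^\infty([a,b])$. Because the frame is parallel, $\rnabla_{\dot\gamma}X=\dot X^ie_i$ and $\rnabla_{\dot\gamma}\rnabla_{\dot\gamma}X=\ddot X^ie_i$, so the three terms of~\eqref{Jacobi} all become functions of $t$ times the frame: the curvature term $\rR(\dot\gamma,X)\dot\gamma$ is linear in $X$ with coefficients built from the components of $\rR$ and $\dot\gamma$ in the frame, and the torsion term $\rnabla_{\dot\gamma}(\rT(\dot\gamma,X))$ involves $\rT$, its derivative along $\gamma$, and first derivatives $\dot X^i$ because $\rT(\dot\gamma,X)$ is linear in $X$ and differentiating it brings down one $\dot X$. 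Thus~\eqref{Jacobi} reads
  \begin{equation*}
    \ddot X^i = a^i_j(t)\,\dot X^j + b^i_j(t)\,X^j\;,
  \end{equation*}
a linear second-order system with smooth coefficients $a^i_j,b^i_j$ on the compact interval $[a,b]$.

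Next I would record that prescribing $X(a)=X_0$ together with $\rnabla_{\dot\gamma(a)}X-\rT(\dot\gamma(a),X_0)=Y_0$ is equivalent to prescribing the pair $\bigl(X(a),\rnabla_{\dot\gamma(a)}X\bigr)$, hence the pair $\bigl(X^i(a),\dot X^i(a)\bigr)$. Indeed $\rT(\dot\gamma(a),X_0)$ is a fixed vector once $X_0$ is fixed, so the second condition determines $\rnabla_{\dot\gamma(a)}X=Y_0+\rT(\dot\gamma(a),X_0)$ uniquely, and conversely. The standard theorem on linear ODEs then yields a unique global solution $X^i$ on all of $[a,b]$ with these Cauchy data, producing a unique $X\in\fX(M,\gamma)$ satisfying~\eqref{Jacobi} and both equations of~\eqref{initial conditions}.

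The one point needing care, rather than a genuine obstacle, is checking that this $X$ actually lies in $\JJ(M,\FF,\gamma)$, i.e.\ that the algebraic condition~\eqref{vertical} holds. But~\eqref{vertical} asks that $\rnabla_{\dot\gamma(a)}X-\rT(\dot\gamma(a),X(a))\in T_{\gamma(a)}\FF$, and by our choice of initial data this quantity equals $Y_0$, which belongs to $T_{\gamma(a)}\FF$ by hypothesis. Hence~\eqref{vertical} is satisfied automatically, so $X\in\JJ(M,\FF,\gamma)$, and the solution is the unique such field with the prescribed data. I would close by remarking that uniqueness within $\JJ(M,\FF,\gamma)$ is immediate: any two members of $\JJ(M,\FF,\gamma)$ meeting~\eqref{initial conditions} satisfy the same linear ODE~\eqref{Jacobi} with the same Cauchy pair $\bigl(X(a),\rnabla_{\dot\gamma(a)}X\bigr)$, so they coincide.
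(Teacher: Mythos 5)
Your proposal is correct and follows essentially the same route as the paper: choose a $\rnabla$-parallel orthonormal frame along $\gamma$, reduce~\eqref{Jacobi} to a second-order linear ODE system in the components $X^i$, translate~\eqref{initial conditions} into the Cauchy pair $\bigl(X^i(a),\dot X^i(a)\bigr)$, and note that~\eqref{vertical} holds automatically because $Y_0\in T_{\gamma(a)}\FF$. The only cosmetic difference is that the paper additionally normalizes $\gamma$ to arc length and takes $e_n=\dot\gamma$ to write the coefficients explicitly, which your frame-independent formulation makes unnecessary.
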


\begin{proof}
	Without loss of generality, we can assume that $\gamma$ is parametrized by arc length. Then we can choose $\{e_1,\dots,e_n\}$, like in the proof of Proposition~\ref{p:vertical}, so that $e_n=\dot\gamma$. Write $\rR(e_k,e_l)e_j=\rR_{jkl}^i\,e_i$, $\rT(e_k,e_l)=\rT_{kl}^i\,e_i$, $X_0=X_0^i\,e_i(a)$ and $Y_0=Y_0^i\,e_i(a)$ with $\rR_{jkl}^i,\rT_{kl}^i\in C^\infty([a,b])$ and $X_0^i,Y_0^i\in\R$. For $X=X^ie_i\in\fX(M,\gamma)$ ($X^i\in C^\infty([a,b])$),~\eqref{Jacobi} and~\eqref{initial conditions} become
		\begin{gather*}
			\ddot X^i-\rT_{nj}^i\,\dot X^j-(\rR_{nnj}^i+\rT_{nj}^i)X^j=0\;,\\
			X^i(a)=X_0^i\;,\quad\dot X^i(a)=Y_0^i+X_0^j\,\rT_{nj}^i(a)\;.
		\end{gather*}
	By the basic ODE theory, this system of differential equations has a unique solution $(X^1,\dots,X^n)$. So there is a unique $X\in\fX(M,\gamma)$ satisfying~\eqref{Jacobi} and~\eqref{initial conditions}. Moreover $X$ satisfies~\eqref{vertical} because $Y_0\in T_{\gamma(a)}\FF$.
\end{proof}

Proposition~\ref{p:existence of Jacobi} means that the mapping
	\[
		X\mapsto(X(a),\rnabla_{\dot\gamma(a)}X-\rT(\dot\gamma(a),X(a)))
	\]
defines a linear isomorphism $\JJ(M,\FF,\gamma)\to T_{\gamma(a)}M\oplus T_{\gamma(a)}\FF$, and therefore $\dim\JJ(M,\FF,\gamma)=n+n''$.

A {\em leafwise geodesic variation\/} of $\gamma$ is a smooth map $f:[a,b]\times(-\epsilon,\epsilon)\to M$, for some $\epsilon>0$, such that each curve $\gamma_s=f(\cdot,s)$ is a leafwise geodesic, with $\gamma_0=\gamma$. The {\em variation field\/} of $f$, $X\in\fX(M,\gamma)$, is defined by
	\[
		X(t)=\partial_sf(t,s)|_{s=0}\;.
	\]

\begin{prop}\label{p:Jacobi=variation}
	$\JJ(M,\FF,\gamma)$ consists of the variation fields of leafwise geodesic variations of $\gamma$.
\end{prop}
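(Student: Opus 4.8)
The plan is to reproduce the classical correspondence between Jacobi fields and geodesic variations, now for the adapted connection $\rnabla$, keeping careful track of the torsion $\rT$ and of the leafwise constraint. For a two-parameter map $f(t,s)$ I write $T=\partial_t f$ and $S=\partial_s f$ for its coordinate fields along $f$, so that $[T,S]=0$ and the torsion yields the modified symmetry lemma $\rnabla_T S-\rnabla_S T=\rT(T,S)$. To see that the variation field of a leafwise geodesic variation lies in $\JJ(M,\FF,\gamma)$, I would use $\rnabla_T T=0$ (each $\gamma_s$ is an $\rnabla$-geodesic) and the definition of $\rR$ together with $[S,T]=0$ to get $\rR(S,T)T=-\rnabla_T\rnabla_S T$; substituting $\rnabla_S T=\rnabla_T S+\rT(S,T)$ and evaluating at $s=0$, where $T=\dot\gamma$ and $S=X$, the antisymmetry of $\rR$ and $\rT$ in their arguments turns this into exactly~\eqref{Jacobi}. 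For condition~\eqref{vertical}, the same symmetry lemma gives $\rnabla_{\dot\gamma}X-\rT(\dot\gamma,X)=\rnabla_S T|_{s=0}$; since every $\gamma_s$ is leafwise, $T$ is a vertical section of $f^*TM$, and because $\rnabla$ preserves $T\FF$, the field $\rnabla_S T$ is vertical, so~\eqref{vertical} holds.

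For the converse I would realize a prescribed $X\in\JJ(M,\FF,\gamma)$, with data $X_0=X(a)$ and $Y_0=\rnabla_{\dot\gamma(a)}X-\rT(\dot\gamma(a),X_0)\in T_{\gamma(a)}\FF$, as such a variation field, then invoke the uniqueness in Proposition~\ref{p:existence of Jacobi}. Choose a smooth curve $\alpha$ with $\alpha(0)=\gamma(a)$ and $\dot\alpha(0)=X_0$, and a vertical field $W$ along $\alpha$ with $W(0)=\dot\gamma(a)$ and $\rnabla_{\dot\alpha(0)}W=Y_0$ (take the $\rnabla$-parallel transport of $\dot\gamma(a)$ plus $s$ times that of $Y_0$, both vertical since $\rnabla$ preserves $T\FF$ and $Y_0\in T_{\gamma(a)}\FF$). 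Put $f(t,s)=\rexp_{\alpha(s)}((t-a)W(s))$; for small $\epsilon$ this is defined and smooth on $[a,b]\times(-\epsilon,\epsilon)$ by continuous dependence of geodesics, and each $t$-curve is an $\rnabla$-geodesic with vertical initial velocity, hence leafwise, so $f$ is a leafwise geodesic variation with $\gamma_0=\gamma$. Its variation field $X'$ is an adapted Jacobi field by the first part, with $X'(a)=\dot\alpha(0)=X_0$ and $\rnabla_{\dot\gamma(a)}X'-\rT(\dot\gamma(a),X'(a))=\rnabla_S T|_{(a,0)}=\rnabla_{\dot\alpha(0)}W=Y_0$, so $X'=X$.

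The main obstacle is the bookkeeping of the torsion: the symmetry lemma is no longer $\rnabla_T S=\rnabla_S T$, and the torsion-dependent terms must be combined through the antisymmetry of $\rR$ and $\rT$ so as to recover precisely the form of~\eqref{Jacobi} rather than a nearby expression. A secondary point deserving care is that the leafwise constraint be respected throughout — forcing $W$ to be vertical so that the $\gamma_s$ remain geodesics of the leaves, and making~\eqref{vertical} automatic — which is exactly what distinguishes adapted Jacobi fields from Jacobi fields of arbitrary $\rnabla$-geodesics.
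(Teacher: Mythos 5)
Your proposal is correct and follows essentially the same route as the paper: the forward direction via the torsion-modified symmetry identity $\rnabla_{\partial_tf}\partial_sf-\rT(\partial_tf,\partial_sf)=\rnabla_{\partial_sf}\partial_tf$ together with the definition of $\rR$ and verticality of $\partial_tf$, and the converse by constructing $f(t,s)=\rexp_{\alpha(s)}((t-a)W(s))$ with $W$ built from $\rnabla$-parallel (hence vertical) transports of $\dot\gamma(a)$ and $Y_0$, then invoking the uniqueness of Proposition~\ref{p:existence of Jacobi}. The only inessential deviation is that you take an arbitrary curve $\alpha$ where the paper uses a $\rnabla$-geodesic $\xi$, which changes nothing since only $\alpha(0)$ and $\dot\alpha(0)$ enter the computation.
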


\begin{proof}
	Suppose that $X\in\fX(M,\gamma)$ is the variation field of a leafwise geodesic variation $f:[a,b]\times(-\epsilon,\epsilon)\to M$ of $\gamma$. Let $(t,s)$ denote the canonical coordinates of $[a,b]\times(-\epsilon,\epsilon)$. Observe that
		\begin{equation}\label{rnabla_partial_tf partial_sf - rT(partial_tf,partial_sf)}
			\rnabla_{\partial_tf}\partial_sf-\rT(\partial_tf,\partial_sf)=\rnabla_{\partial_sf}\partial_tf
		\end{equation}
	is vertical because $\partial_tf$ is vertical and $\rnabla_{\partial_sf}$ preserves vertical vector fields. Then, by evaluating this equality at $(s,t)=(0,0)$, we get that $X$ satisfies~\eqref{vertical}. Moreover, since $\rnabla_{\partial_tf}\partial_tf=[\partial_tf,\partial_sf]=0$, we get
		\begin{align*}
			\rnabla_{\partial_tf}\rnabla_{\partial_tf}\partial_sf
			&=\rnabla_{\partial_tf}\rnabla_{\partial_sf}\partial_tf+\rnabla_{\partial_tf}(\rT(\partial_tf,\partial_sf))\\
			&=\rR(\partial_tf,\partial_sf)\partial_tf+\rnabla_{\partial_tf}(\rT(\partial_tf,\partial_sf))\;.
		\end{align*}
	By evaluating this equality at $s=0$, it follows that $X$ satisfies~\eqref{Jacobi}. Therefore $X\in\JJ(M,\FF,\gamma)$.
	
	Now take any $X\in\JJ(M,\FF,\gamma)$, and let
		\[
			X_0=X(a)\;,\quad Y_0=\rnabla_{\dot\gamma(a)}X-\rT(\dot\gamma(a),X_0)\;.
		\]
	Let $\xi:(-\epsilon,\epsilon)\to M$ ($\epsilon>0$) be a $\rnabla$-geodesic with initial conditions $\xi(0)=\gamma(a)$ and $\dot\xi(0)=X_0$. Let $V$ and $W$ be the $\rnabla$-parallel tangent vector fields along $\xi$ with $V(0)=\dot\gamma(a)$ and $W(0)=Y_0$. Note that $V$ and $W$ are vertical because $\dot\gamma(a)$ and $Y_0$ are vertical, and the $\rnabla$-parallel transport preserves vertical vector fields. Let $f:[a,b]\times(-\epsilon,\epsilon)\to M$ be the leafwise geodesic variation of $\gamma$ given by 
		\[
			f(t,s)=\rexp_{\xi(s)}((t-a)(V(s)+sW(s)))\;.
		\]
	 The corresponding variation field $Z\in\JJ(M,\FF,\gamma)$ satisfies $Z(a)=\partial_sf(a,0)=\dot\xi(0)=X_0$ and
		\begin{multline*}
			\rnabla_{\dot\gamma(a)}Z-\rT(\dot\gamma(a),Z_0)
			=\rnabla_{\partial_tf}\partial_sf|_{s=0}-\rT(\partial_tf,\partial_sf)|_{s=0}
			=\rnabla_{\partial_sf}\partial_tf|_{s=0}\\
			=\rnabla_{X_0}(V(s)+sW(s))=W(0)
			=Y_0=\rnabla_{\dot\gamma(a)}X-\rT(\dot\gamma(a),X_0)\;.
		\end{multline*}
	So $Z=X$ by Proposition~\ref{p:existence of Jacobi}.	
\end{proof}

\begin{rem}
	Proposition~\ref{p:vertical} also follows from Proposition~\ref{p:Jacobi=variation} and evaluating the terms of~\eqref{rnabla_partial_tf partial_sf - rT(partial_tf,partial_sf)} at $s=0$.
\end{rem}

\begin{prop}\label{p:|X|^2+|Y|^2}
	Let $X\in\JJ(M,\FF,\gamma)$ and $Y=\rnabla_{\dot\gamma}X-\rT(\dot\gamma,X)$. Then 
		\[
			|X(b)|^2+|Y(b)|^2\le e^{C(b-a)}\left(|X(a)|^2+|Y(a)|^2\right)\;,
		\]
	where\footnote{Recall that  $\gamma:[a,b]\to M$ is a leafwise geodesic, and therefore $|\dot\gamma|$ is independent of $t$.}
		\[
			C=\max_{a\le t\le b}\max\left\{3,2|\rT_{\gamma(t)}|^2|\dot\gamma|^2+1+|\rR_{\gamma(t)}|^2|\dot\gamma|^4\right\}\;.
		\]
\end{prop}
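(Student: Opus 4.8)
The plan is to prove the inequality by a standard Gronwall-type energy estimate. I would introduce the energy function
\[
  \phi(t)=|X(t)|^2+|Y(t)|^2\;,
\]
where $Y=\rnabla_{\dot\gamma}X-\rT(\dot\gamma,X)$, and bound its derivative $\dot\phi$ linearly by $\phi$ itself, so that $\phi(b)\le e^{C(b-a)}\phi(a)$ follows immediately from the integral form of Gronwall's lemma. The constant $C$ in the statement is manifestly tailored to such an argument: the terms $|\rT_{\gamma(t)}|^2|\dot\gamma|^2$ and $|\rR_{\gamma(t)}|^2|\dot\gamma|^4$ are exactly the squared operator norms one expects to appear after applying Cauchy--Schwarz (or Young's inequality $2\langle u,v\rangle\le|u|^2+|v|^2$) to cross terms involving $\rT(\dot\gamma,X)$ and $\rR(\dot\gamma,X)\dot\gamma$.

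First I would rewrite the defining conditions~\eqref{Jacobi} and~\eqref{vertical} as a first-order system in $(X,Y)$. Using $Y=\rnabla_{\dot\gamma}X-\rT(\dot\gamma,X)$, the Jacobi equation~\eqref{Jacobi} is equivalent to the pair
\[
  \rnabla_{\dot\gamma}X=Y+\rT(\dot\gamma,X)\;,\qquad
  \rnabla_{\dot\gamma}Y=\rR(\dot\gamma,X)\dot\gamma\;,
\]
exactly as in the reduction~\eqref{Y} used in the proof of Proposition~\ref{p:vertical}. Since $\rnabla$ is Riemannian, differentiating the inner products gives
\[
  \tfrac{1}{2}\dot\phi
  =(\rnabla_{\dot\gamma}X,X)+(\rnabla_{\dot\gamma}Y,Y)
  =(Y,X)+(\rT(\dot\gamma,X),X)+(\rR(\dot\gamma,X)\dot\gamma,Y)\;.
\]
Here I would keep in mind that $|\dot\gamma|$ is constant along the leafwise geodesic $\gamma$, so $|\rT(\dot\gamma,X)|\le|\rT_{\gamma(t)}|\,|\dot\gamma|\,|X|$ and $|\rR(\dot\gamma,X)\dot\gamma|\le|\rR_{\gamma(t)}|\,|\dot\gamma|^2\,|X|$, where the tensor norms are the relevant operator norms.

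Next I would estimate each of the three terms. The term $(Y,X)$ is bounded by $\tfrac{1}{2}(|X|^2+|Y|^2)$. For the torsion term, $|(\rT(\dot\gamma,X),X)|\le|\rT_{\gamma(t)}|\,|\dot\gamma|\,|X|^2$, which I would further bound using $|\rT_{\gamma(t)}|\,|\dot\gamma|\le\tfrac{1}{2}(|\rT_{\gamma(t)}|^2|\dot\gamma|^2+1)$ to make the coefficient match $C$; alternatively one absorbs it directly. For the curvature term, Young's inequality gives $|(\rR(\dot\gamma,X)\dot\gamma,Y)|\le\tfrac{1}{2}|\rR_{\gamma(t)}|^2|\dot\gamma|^4|X|^2+\tfrac{1}{2}|Y|^2$. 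Collecting everything yields $\dot\phi\le C\,\phi$ with the stated $C$, after checking that the various numerical coefficients are dominated by $C$; the appearance of the constant $3$ in the maximum guarantees enough room to absorb the coefficients of $|X|^2$ and $|Y|^2$ coming from $(Y,X)$ together with the $+1$ slack.

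The only genuine obstacle I anticipate is bookkeeping: verifying that the precise combination $2|\rT_{\gamma(t)}|^2|\dot\gamma|^2+1+|\rR_{\gamma(t)}|^2|\dot\gamma|^4$, together with the floor value $3$, really dominates the sum of all coefficients simultaneously, for both the $|X|^2$ and $|Y|^2$ contributions. This is not deep but requires choosing the split in Young's inequality consistently so that no term is undercounted; I would organize the computation so that $\dot\phi\le C\phi$ is visibly true termwise rather than trying to optimize $C$. Once the differential inequality is in hand, Gronwall's lemma closes the argument.
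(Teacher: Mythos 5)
Your strategy is the paper's own: reduce to the first-order system $\rnabla_{\dot\gamma}X=Y+\rT(\dot\gamma,X)$, $\rnabla_{\dot\gamma}Y=\rR(\dot\gamma,X)\dot\gamma$, differentiate $\phi=|X|^2+|Y|^2$ using that $\rnabla$ is Riemannian, and close with Gronwall. But the step you yourself flagged as mere bookkeeping is where the argument, as written, fails: with the splits you specify, the inequality $\dot\phi\le C\phi$ with the \emph{stated} $C$ does not come out. Writing $T=|\rT_{\gamma(t)}|^2|\dot\gamma|^2$ and $R=|\rR_{\gamma(t)}|^2|\dot\gamma|^4$, your three bounds give
\[
\dot\phi\le(T+2+R)\,|X|^2+2\,|Y|^2
\]
(coefficient of $|X|^2$: $1$ from $2(Y,X)$, then $T+1$ from $2(\rT(\dot\gamma,X),X)$ via $2|\rT||\dot\gamma|\le T+1$, then $R$ from the curvature term). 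The needed domination $T+2+R\le\max\{3,\,2T+1+R\}$ fails whenever $T<1$ and $T+R>1$: for instance $T=0$, $R=2$ gives $|X|^2$-coefficient $4$ while $C=3$. So your remark that ``the constant $3$ guarantees enough room'' is not correct --- the floor $3$ only caps the $|Y|^2$-coefficient, and no re-weighting of Young's inequality in the torsion term rescues the direct expansion, since the cross term $2(Y,X)$ unavoidably deposits a $+1$ on the $|X|^2$ side under this splitting.

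The repair is a different grouping of the same elementary inequalities, and it is exactly what the paper does: instead of expanding $2(\rnabla_{\dot\gamma}X,X)$ into cross terms, first bound $\frac{d}{dt}|X|^2\le|\rnabla_{\dot\gamma}X|^2+|X|^2$, and then use $|\rnabla_{\dot\gamma}X|\le|Y|+|\rT(\dot\gamma,X)|$ together with $(u+v)^2\le2(u^2+v^2)$ to get $\frac{d}{dt}|X|^2\le2|Y|^2+(2T+1)|X|^2$. Adding $\frac{d}{dt}|Y|^2\le R|X|^2+|Y|^2$ (which you have correctly) yields coefficients exactly $2T+1+R$ on $|X|^2$ and $3$ on $|Y|^2$, i.e.\ $\dot\phi\le C\phi$ with the stated $C$; the point is that the Cauchy--Schwarz cost of the cross term is shifted onto $|Y|^2$, where the floor $3$ in $C$ absorbs it. With this reorganization your proof goes through verbatim; as written, the deferred termwise verification would not succeed.
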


\begin{proof}
	We have
		\begin{multline*}
			|\rnabla_{\dot\gamma}X|^2\le(|Y|+|\rT(\dot\gamma,X)|)^2
			=|Y|^2+|\rT(\dot\gamma,X)|^2+2|Y||\rT(\dot\gamma,X)|\\
			\le2(|Y|^2+|\rT(\dot\gamma,X)|^2)
			\le2(|Y|^2+|\rT_\gamma|^2|\dot\gamma|^2|X|^2)\;.
		\end{multline*}
	So
		\begin{multline*}
			\frac{d}{dt}|X|^2=2(\rnabla_{\dot\gamma}X,X)\le2|\rnabla_{\dot\gamma}X||X|
			\le|\rnabla_{\dot\gamma}X|^2+|X|^2\\
			\le2|Y|^2+(2|\rT_\gamma|^2|\dot\gamma|^2+1)|X|^2\;.
		\end{multline*}
	Moreover, by~\eqref{Jacobi},
		\begin{multline*}
			\frac{d}{dt}|Y|^2=2(\rnabla_{\dot\gamma}Y,Y)=2(\rR(\dot\gamma,X)\dot\gamma,Y)
			\le2|\rR(\dot\gamma,X)\dot\gamma||Y|\\
			\le|\rR(\dot\gamma,X)\dot\gamma|^2+|Y|^2
			\le|\rR_\gamma|^2|\dot\gamma|^4|X|^2+|Y|^2\;.
		\end{multline*}
	Thus
		\[
			\frac{d}{dt}\left(|X|^2+|Y|^2\right)\le C\left(|X|^2+|Y|^2\right)
		\]
	for the constant $C$ defined in the statement, and the result follows.
\end{proof}

Like in the study of usual Jacobi fields, $\dot\gamma$ and $(t-a)\dot\gamma$ are adapted Jacobi fields along $\gamma$. However it may not be possible to modify some $X\in\JJ(M,\FF,\gamma)$ so that it becomes an adapted Jacobi field orthogonal to $\dot\gamma$. But we can modify it as follows so that the corresponding modification of $Y=\rnabla_{\dot\gamma}X-\rT(\dot\gamma,X)$ is orthogonal to $\dot\gamma$.  Since $\rnabla$ is metric, and by~\eqref{Y} and~\eqref{(rR(E,F)G,H)},
	\[
		\frac{d}{dt}(Y,\dot\gamma)=(\rnabla_{\dot\gamma}Y,\dot\gamma)=(\rR(\dot\gamma,X)\dot\gamma,\dot\gamma)=0\;.
	\]
Hence $(Y,\dot\gamma)$ is constant, say equal to some $c\in\R$, and $Y^*=Y-c\dot\gamma$ is vertical and orthogonal to $\dot\gamma$. Moreover $X^*=X-c(t-a)\dot\gamma\in\JJ(M,\FF,\gamma)$ satisfies
	\[
		\rnabla_{\dot\gamma}X^*-\rT(\dot\gamma,X^*)=\rnabla_{\dot\gamma}X-\rT(\dot\gamma,X)-c\dot\gamma=Y^*\;.
	\]

\section{Normal foliation coordinates}\label{s: normal foln coordinates}

For a fixed point $p\in M$, let $\kappa_p$ (or simply $\kappa$) be the smooth map of some neighborhood $W$ of $0$ in $T_pM$ to $M$ defined by
	\[
		\kappa_p(X)=\rexp_q(\rP_{\bH X}\bV X)\;,
	\]
where $q=\rexp_p(\bH X)$, and $\rP_{\bH X}:T_pM\to T_qM$ denotes the $\rnabla$-parallel transport along the $\rnabla$-geodesic $t\mapsto\rexp_p(t\bH X)$, $0\le t\le1$. Take $\rexp:V\to O$, $\pi_a:U_a\to\Sigma_a$ and $\cexp:\cV\to\cO$ like in Section~\ref{s: rexp}. By choosing $W$ small enough, we have $W\subset V$ and $\kappa(W)\subset O$; thus $\pi_{a*}(W)\subset\cV$ and $\pi_a\kappa(W)\subset\cO$. For $X\in V$, by the definition of $\kappa$ and~\eqref{pi rexp = cexp pi_*}, we have 
	\begin{equation}\label{pi kappa = cexp pi_*}
		\pi_a\kappa(X)=\pi_a\kappa(\bH X)=\pi_a\,\rexp(\bH X)=\cexp\,\pi_{a*}\bH X=\cexp\,\pi_{a*}X\;.
	\end{equation}
Like in Proposition~\ref{p:rexp}, we get the following from~\eqref{pi kappa = cexp pi_*}.

\begin{prop}\label{p:kappa is foliated}
	For $X,Y\in W$, we have $X-Y\in T_p\FF$ if and only if $\kappa(X)$ and $\kappa(Y)$ belong to the same plaque of $U$.
\end{prop}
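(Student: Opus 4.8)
The plan is to copy the proof of Proposition~\ref{p:rexp} almost verbatim, now extracting the needed intertwining relation from~\eqref{pi kappa = cexp pi_*} rather than from~\eqref{pi rexp = cexp pi_*}. The entire argument is a chain of equivalences, so the only thing to do is to justify each link in the present setting of the map $\kappa$ and of foliation coordinates.

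First I would observe that the differential $\pi_{a*}\colon T_pM\to T_{\pi_a(p)}\Sigma_a$ of the distinguished submersion has kernel exactly $T_p\FF$: its fibers are the plaques, which are tangent to $T\FF$, and a dimension count gives equality. Hence, for $X,Y\in W$,
\[
	X-Y\in T_p\FF\iff\pi_{a*}(X-Y)=0\iff\pi_{a*}X=\pi_{a*}Y\;.
\]
Next, since $W$ was chosen so that $\pi_{a*}(W)\subset\cV$ and $\cexp$ is injective on $\cV$, applying $\cexp$ is reversible, so
\[
	\pi_{a*}X=\pi_{a*}Y\iff\cexp\,\pi_{a*}X=\cexp\,\pi_{a*}Y\;.
\]
Finally~\eqref{pi kappa = cexp pi_*} turns the right-hand side into $\pi_a\kappa(X)=\pi_a\kappa(Y)$; since $\kappa(X),\kappa(Y)\in O\subset U_a$, this last equality says exactly that $\kappa(X)$ and $\kappa(Y)$ lie in a common fiber of $\pi_a$ inside $O$, i.e.\ in the same plaque. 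Concatenating the three equivalences gives the proposition.

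I do not expect any real obstacle here: all the geometric content is already packaged in~\eqref{pi kappa = cexp pi_*}, and the remaining ingredients---injectivity of $\cexp$ on $\cV$ and the identity $\ker\pi_{a*}=T_p\FF$---are standard. The only point deserving a word of care is the last identification of ``same fiber of $\pi_a$ in $O$'' with ``same plaque,'' which holds because $O$ has been shrunk enough that $\pi_a|_O$ has connected fibers, exactly as was tacitly used in Proposition~\ref{p:rexp}.
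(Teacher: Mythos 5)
Your proof is correct and is exactly the paper's argument: the paper proves this proposition with the one-line remark ``Like in Proposition~\ref{p:rexp}, we get the following from~\eqref{pi kappa = cexp pi_*}'', i.e.\ the same chain of equivalences $X-Y\in T_p\FF\Leftrightarrow\pi_{a*}X=\pi_{a*}Y\Leftrightarrow\cexp\,\pi_{a*}X=\cexp\,\pi_{a*}Y\Leftrightarrow\pi_a\kappa(X)=\pi_a\kappa(Y)$ that you spell out. Your added justifications (that $\ker\pi_{a*}=T_p\FF$, the injectivity of $\cexp$ on $\cV$, and that the fibers of $\pi_a$ are the plaques by the definition of a distinguished submersion) are precisely the ingredients the paper leaves implicit.
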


\begin{prop}\label{p:kappa_*=id}
	$\kappa_*\equiv\id:T_0(T_pM)\equiv T_pM\to T_pM$.
\end{prop}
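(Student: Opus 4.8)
The plan is to compute the differential directly from the definition, reducing everything to the classical fact that the differential of an exponential map at the origin is the identity. Under the canonical identification $T_0(T_pM)\cong T_pM$, a vector $Z\in T_pM$ is represented by the ray $s\mapsto sZ$, so that
	\[
		\kappa_{p*}(Z)=\frac{d}{ds}\Big|_{s=0}\kappa_p(sZ)\;.
	\]
Since $\bH$, $\bV$ and the $\rnabla$-parallel transport are all linear, I would first rewrite $\kappa_p(sZ)=\rexp_{q_s}\bigl(s\,\rP_{s\bH Z}\bV Z\bigr)$, where $q_s=\rexp_p(s\bH Z)$ and $\rP_{s\bH Z}$ is the parallel transport along $t\mapsto\rexp_p(ts\bH Z)$, $0\le t\le1$.

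The one technical point is that both the base point $q_s$ and the transported vector depend on $s$. To decouple these dependencies cleanly I would introduce the two-parameter map
	\[
		G(s,u)=\rexp_{q_s}\bigl(u\,\rP_{s\bH Z}\bV Z\bigr)\;,
	\]
so that $\kappa_p(sZ)=G(s,s)$, and then apply the chain rule along the diagonal to obtain
	\[
		\kappa_{p*}(Z)=\partial_sG(0,0)+\partial_uG(0,0)\;,
	\]
an identity of vectors in $T_{G(0,0)}M=T_pM$. The justification of this diagonal chain rule is immediate from the smoothness of $\kappa_p$ (equivalently of $G$), which is already granted in the definition.

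It then remains to evaluate the two partials, each of which collapses to a single one-variable exponential. Setting $u=0$ gives $G(s,0)=q_s=\rexp_p(s\bH Z)$, a $\rnabla$-geodesic with initial velocity $\bH Z$, so $\partial_sG(0,0)=\bH Z$. Setting $s=0$, the transport along the constant geodesic $t\mapsto p$ is the identity, hence $G(0,u)=\rexp_p(u\bV Z)$; since the differential of $\rexp_p$ at $0$ is the identity of $T_pM$ (the standard property of the exponential map of the geodesic spray of $\rnabla$), we get $\partial_uG(0,0)=\bV Z$. Adding, $\kappa_{p*}(Z)=\bH Z+\bV Z=Z$, which is exactly $\kappa_{p*}\equiv\id$. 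I do not expect any genuine obstacle here: the only thing to be careful about is keeping track of the two separate $s$-dependencies, and the map $G$ is designed precisely to isolate them; beyond that the computation is routine.
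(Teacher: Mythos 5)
Your proof is correct, but it takes a genuinely different route from the paper's. The paper argues subspace by subspace: on $T_0(T_p\FF)$ it observes that $\kappa$ coincides with the exponential map of the leaf $L_p$, and on $T_0(T_p\FF^\perp)$ it uses the relation $\pi_a\kappa=\cexp\,\pi_{a*}$ to get a commutative diagram whose vertical arrows $\pi_{a*}$ are isomorphisms and whose bottom arrow $\cexp_*$ is the identity; linearity of the differential then gives the result on all of $T_pM$. You instead compute $\kappa_{p*}(Z)$ for an arbitrary $Z$ at once, via the two-parameter map $G(s,u)=\rexp_{q_s}\bigl(u\,\rP_{s\bH Z}\bV Z\bigr)$ and the chain rule along the diagonal, obtaining $\bH Z$ from the $s$-slice and $\bV Z$ from the $u$-slice. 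Your version is more self-contained: it never invokes the local Riemannian submersion $\pi_a$ or the quotient exponential $\cexp$, only the spray homogeneity $d(\rexp_p)_0=\id$, and it would work verbatim for any map built this way from two exponentials and a parallel transport, independent of the foliated structure. The paper's version buys economy within its own framework: it reuses the identity~\eqref{pi kappa = cexp pi_*} (already needed for Proposition~\ref{p:kappa is foliated}) and requires no smooth-parameter-dependence argument. On that last point, one small inaccuracy in your write-up: smoothness of $G$ does not follow from smoothness of $\kappa_p$ alone (smoothness of the diagonal restriction $s\mapsto G(s,s)$ says nothing about $G$ off the diagonal); it follows instead from smoothness of $\rexp$ near the zero section together with smooth dependence of the parallel transport $\rP_{s\bH Z}$ on $s$, which is standard ODE dependence on parameters. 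With that justification supplied, the argument is complete.
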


\begin{proof}
	We have $\kappa_*\equiv\id:T_0(T_p\FF)\equiv T_p\FF\to T_p\FF$ because, on $T_p\FF$, $\kappa$ equals the exponential map of the leaf $L_p$. Moreover $\kappa_*(T_0(T_p\FF^\perp))\subset T_p\FF^\perp$ by the definition of $\kappa$, and~\eqref{pi kappa = cexp pi_*} induces the commutative diagram
		\[
			\begin{CD}
				T_0(T_p\FF^\perp) @>{\kappa_*}>> T_p\FF^\perp\\
				@V{\pi_{a**}}VV  @VV{\pi_{a*}}V \\
				T_0(T_{\pi_a(p)}\Sigma) @>{\cexp_*}>> T_{\pi_a(p)}\Sigma\;,
			\end{CD}
		\]
	where the vertical maps are isomorphisms. Hence $\kappa_*\equiv\id:T_0(T_p\FF^\perp)\equiv T_p\FF^\perp\to T_p\FF^\perp$.
\end{proof}

\begin{cor}\label{c:kappa is a chart}
	$\kappa$ defines a diffeomorphism of some neighborhood of $0$ in $T_pM$ to some neighborhood of $p$ in $M$.
\end{cor}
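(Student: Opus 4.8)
The plan is to deduce this directly from the Inverse Function Theorem, using Proposition~\ref{p:kappa_*=id} as the essential input. The two ingredients I need are that $\kappa$ is smooth near $0\in T_pM$, and that its differential at $0$ is a linear isomorphism.

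First I would verify smoothness. By construction, $\kappa$ is a composition of smooth maps on $W$: the orthogonal projections $\bV,\bH:T_pM\to T_pM$ are linear (hence smooth), the assignment $X\mapsto q=\rexp_p(\bH X)$ is smooth because $\rexp$ is smooth on $V$, the $\rnabla$-parallel transport $\rP_{\bH X}$ depends smoothly on $X$ (it solves a linear ODE whose coefficients vary smoothly along the $\rnabla$-geodesic $t\mapsto\rexp_p(t\bH X)$), and finally $\rexp_q$ is smooth. Hence $\kappa:W\to M$ is smooth.

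Next, Proposition~\ref{p:kappa_*=id} gives $\kappa_*\equiv\id$ on $T_0(T_pM)\equiv T_pM$, so the differential of $\kappa$ at $0$ is the identity, and in particular a linear isomorphism $T_pM\to T_pM$. The Inverse Function Theorem then produces a neighborhood of $0$ in $T_pM$ on which $\kappa$ restricts to a diffeomorphism onto its image, which is an open neighborhood of $\kappa(0)=\rexp_p(0)=p$ in $M$.

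I expect no real obstacle here: the substance of the statement was already absorbed into the computation of $\kappa_*$ in Proposition~\ref{p:kappa_*=id}, so the only remaining point is the routine check that $\kappa$ is smooth, after which the Inverse Function Theorem applies without further ado.
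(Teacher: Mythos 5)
Your proof is correct and is exactly the argument the paper intends: the corollary is stated without proof precisely because it follows from Proposition~\ref{p:kappa_*=id} via the Inverse Function Theorem, with smoothness of $\kappa$ already built into its definition (your ODE-based verification of smooth dependence of $\rP_{\bH X}$ on $X$ is a careful but routine supplement). Nothing is missing.
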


By choosing orthonormal references, we get identities $T_p\FF^\perp\equiv\R^{n'}$ and $T_p\FF\equiv\R^{n''}$. Then, according to Proposition~\ref{p:kappa is foliated} and Corollary~\ref{c:kappa is a chart}, for some open balls centered at the origin, $B'$ in $\R^{n'}$ and $B''$ in $\R^{n''}$, we can assume that $\kappa$ is a diffeomorphism of $B'\times B''$ to some open neighborhood $U$ of $p$, and $\kappa^{-1}=x=(x^1,\dots,x^n)$ is a foliation chart on $U$; it will be said that $x$ is a system of {\em normal\/} foliation coordinates or {\em normal\/} foliation chart at $p$. Write $x\equiv(x',x'')$ with $x'=(x^1,\dots,x^{n'})$ and $x''=(x^{n'+1},\dots,x^n)$. We will use the identity $U\equiv B'\times B''$ given by these coordinates; in particular,  the local transversal defined by $x''=0$ will be identified with $B'\times\{0\}$. As usual, the notation $g_{ij}$ is used for the corresponding coefficients of the metric $g$, and let $(g^{ij})=(g_{ij})^{-1}$. The more explicit notation $U_p$, $x_p$, $x_p'$,  $x_p''$, $x_p^i$, $x_p^{\prime i}$,  $x_p^{\prime\prime i}$, $g^p_{ij}$ and $g_p^{ij}$ may be also used.

\begin{rem}
	The exponential map $\rexp$ also gives rise to foliation coordinates according to Proposition~\ref{p:rexp}. The foliation coordinates defined by $\kappa$ are used to avoid having to study a more general version of adapted Jacobi fields (along arbitrary $\rnabla$-geodesics); specially, to avoid having to estimate their norm, which would possibly need some adapted version of the Rauch comparison theorem. With $\kappa$, the estimates given by Corollary~\ref{c:|sH X| is constant} and Proposition~\ref{p:|X|^2+|Y|^2} will be enough for our purposes. However, the study of more general adapted Jacobi fields could have its own interest.
\end{rem}

\section{Coefficients with respect to normal foliation coordinates}\label{s: coefficients}

Consider the notation of Section~\ref{s: normal foln coordinates}. The reference $\partial_1(0),\dots,\partial_n(0)$ of $T_pM$ is orthonormal. Take the $\rnabla$-parallel transport of $\partial_1(0),\dots,\partial_n(0)$ along the geodesics orthogonal to the leaves emanating from $p$, defining an orthonormal frame on $B'\times\{0\}$. Then extend this orthonormal frame on $B'\times\{0\}$ to $U\equiv B'\times B''$ by using the $\rnabla$-parallel transport along leafwise geodesics emanating from $B'\times\{0\}$. This process produces an orthonormal frame $s_1,\dots,s_n$ of $TM|_U$. Let $\theta^1,\dots,\theta^n$ be the orthonormal reference of $TM^*|_U$ dual to $s_1,\dots,s_n$. Write $\theta^i=a^i_j\,dx^j$ and $dx^i=b^i_j\,\theta^j$; thus $(a^i_j)=(b^i_j)^{-1}$, $\partial_j=a^i_js_i$ and $s_j=b^i_j\partial_i$. The orthonormality of $\theta^1,\dots,\theta^n$ means $g_{ij}\,dx^i\otimes dx^j=\theta^\alpha\otimes\theta^\alpha$, and therefore
	\begin{equation}\label{g_ij}
		g_{ij}=a^\alpha_ia^\alpha_j\;,\quad g^{ij}=b_\alpha^ib_\alpha^j\;.
	\end{equation}

To emphasize the difference between transverse and leafwise coordinates, we may use the notation $x^{\prime i}=x^i$, $\partial'_i=\partial_i$ and $\theta^{\prime i}=\theta^i$ for $i\le n'$, and $x^{\prime\prime i}=x^i$, $\partial''_i=\partial_i$ and $\theta^{\prime\prime i}=\theta^i$ for $i>n'$. Thus, when using $x^{\prime i}$, $\partial'_i$ or $\theta^{\prime i}$, it will be understood that $i$ runs in $\{1,\dots,n'\}$, and, when using $x^{\prime\prime i}$, $\partial''_i$ or $\theta^{\prime\prime i}$, it will be understood that $i$ runs in $\{n'+1,\dots,n\}$. Observe that, on $U$,
	\begin{equation}\label{sV, sH}
		\bV=g_{ik}g^{kj}\,\partial''_j\otimes dx^{\prime i}+\partial''_i\otimes dx^{\prime\prime i}\;,\quad
		\bH=\partial'_i\otimes dx^{\prime i}-g_{ik}g^{kj}\,\partial''_j\otimes dx^{\prime i}\;,
	\end{equation}
where $k$ runs in $\{n'+1,\dots,n\}$.

Given any $x=(x',x'')\in B'\times B''$, $t\mapsto(x',tx'')$ is identified with a leafwise geodesic via the normal foliation chart.

\begin{prop}\label{partial'_i(x',tx'') is Jacobi}
	$t\mapsto\partial'_i(x',tx'')$ is an adapted Jacobi field along the leafwise geodesic $t\mapsto(x',tx'')$.
\end{prop}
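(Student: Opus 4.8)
The plan is to exhibit $t\mapsto\partial'_i(x',tx'')$ as the variation field of an explicit leafwise geodesic variation of the curve $\gamma(t):=(x',tx'')$, and then to invoke Proposition~\ref{p:Jacobi=variation}. Under the identification $U\equiv B'\times B''$, the point $(x',x'')$ is $\kappa$ applied to the tangent vector at $p$ whose components in the frame $\partial_1(0),\dots,\partial_n(0)$ are $(x',x'')$; write this vector as $H_0+V_0$, where $H_0=\sum_{k\le n'}x^{\prime k}\partial'_k(0)\in T_p\FF^\perp$ and $V_0=\sum_{k>n'}x^{\prime\prime k}\partial''_k(0)\in T_p\FF$. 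Thus $\gamma(t)=\kappa(H_0+tV_0)$, and for the given $i\le n'$ (so that $\partial'_i(0)$ is horizontal) I would set
\[
  f(t,s)=\kappa(H_0+s\,\partial'_i(0)+tV_0)\;,
\]
for $s$ in a small interval $(-\epsilon,\epsilon)$.

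First I would check that each $\gamma_s=f(\cdot,s)$ is a leafwise geodesic. Since $\partial'_i(0)$ is horizontal, the horizontal part of $H_0+s\,\partial'_i(0)+tV_0$ is $H_0+s\,\partial'_i(0)$, which is independent of $t$, and its vertical part is $tV_0$. By the definition of $\kappa$ and linearity of parallel transport,
\[
  f(t,s)=\rexp_{q_s}\bigl(t\,\rP_{H_0+s\partial'_i(0)}V_0\bigr)\;,\qquad q_s=\rexp_p(H_0+s\,\partial'_i(0))\;,
\]
so in the variable $t$ this is the $\rnabla$-geodesic through $q_s$ with initial velocity $\rP_{H_0+s\partial'_i(0)}V_0$. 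That velocity is vertical, since $\rnabla$-parallel transport preserves the vertical subbundle (as $\rnabla_E$ preserves vertical vector fields) and $V_0$ is vertical; hence $\gamma_s$ is a geodesic of its leaf, i.e.\ a leafwise geodesic, and clearly $\gamma_0=\gamma$.

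Next I would identify the variation field. Because $x\circ\kappa=\id$ under the coordinate identification, $f(t,s)$ has coordinates $(x'+s\,e_i,tx'')$, where $e_i$ is the $i$-th standard basis vector of $\R^{n'}$. Hence, for fixed $t$, the curve $s\mapsto f(t,s)$ is the coordinate line in the $x^i$ direction through $(x',tx'')$, so its velocity at $s=0$ is exactly $\partial'_i(x',tx'')=\partial'_i(\gamma(t))$. Therefore $t\mapsto\partial'_i(x',tx'')$ is the variation field of the leafwise geodesic variation $f$, and Proposition~\ref{p:Jacobi=variation} yields that it belongs to $\JJ(M,\FF,\gamma)$.

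The only delicate point is the first step: one must verify that, for fixed $s$, the horizontal part of $H_0+s\,\partial'_i(0)+tV_0$ stays constant in $t$ while the vertical part scales linearly, so that the defining formula for $\kappa$ collapses to a single $\rnabla$-geodesic, and that the transported initial velocity is genuinely vertical. Both rely only on $\partial'_i(0)$ being horizontal and on $\rnabla$-parallel transport preserving the splitting $TM=T\FF\oplus T\FF^\perp$; everything else is a direct unwinding of the definitions.
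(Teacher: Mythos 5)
Your proof is correct and follows essentially the same route as the paper: the paper's proof simply takes the coordinate variation $(t,s)\mapsto(x'+se'_i,tx'')$, notes it is a leafwise geodesic variation with variation field $t\mapsto\partial'_i(x',tx'')$, and invokes Proposition~\ref{p:Jacobi=variation}. Your unwinding of $\kappa$ to check that each $\gamma_s$ collapses to a single $\rnabla$-geodesic with vertical initial velocity is exactly the verification the paper leaves implicit (it follows from the remark, just before the proposition, that $t\mapsto(x',tx'')$ is a leafwise geodesic via the normal foliation chart, applied at each $x'+se'_i$).
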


\begin{proof}
	Let $e'_i$ denote the $i$th element of the canonical reference of $\R^{n'}$. The map $(t,s)\mapsto(x'+se'_i,tx'')$ is a leafwise geodesic variation of $t\mapsto(x',tx'')$ whose variation field is $t\mapsto\partial'_i(x',tx'')$. Hence the result follows from Proposition~\ref{p:Jacobi=variation}.
\end{proof}

The standard index notation is used for the coefficients of any tensor with respect to the references $\partial_1,\dots,\partial_n$ and $dx^1,\dots,dx^n$, and the indices will be underlined in the coefficients with respect to the frames $s_1,\dots,s_n$ and $\theta^1,\dots,\theta^n$. We may even mix non-underlined and underlined indices. For instance, $\rT=\rT^{\underline i}\,s_i$ and $\rR=\rR^{\underline i}_{\underline j}\,s_i\otimes\theta^j$ for $2$-forms 
	\[
		\rT^{\underline i}=\frac{1}{2}\rT^{\underline i}_{kl}\,dx^k\wedge dx^l\;,\quad
		\rR^{\underline i}_{\underline j}=\frac{1}{2}\rR^{\underline i}_{\underline{j}kl}\,dx^k\wedge dx^l\;,
	\]
where $\rT^{\underline i}_{kl}=-\rT^{\underline i}_{lk}$ and $\rR^{\underline i}_{\underline{j}kl}=-\rR^{\underline i}_{\underline{j}lk}$; note that $\rT^{\underline i}_{jk}=\rT^{\underline i}(\partial_j,\partial_k)$ and $\rR^{\underline i}_{\underline{j}kl}=\rR^{\underline i}_{\underline{j}}(\partial_j,\partial_k)$. This notation is also used for the coefficients of the covariant derivatives; for example, $\rnabla_{\partial_m}\rT=\rT^{\underline i}_{;m}s_i$ and $\rnabla_{\partial_m}\rT^{\underline i}=(\rT^{\underline i})_{;m}s_i$ for $2$-forms 
	\[
		\rT^{\underline i}_{;m}=\frac{1}{2}\rT^{\underline i}_{kl;m}\,dx^k\wedge dx^l\;,\quad
		(\rT^{\underline i})_{;m}=\frac{1}{2}(\rT^{\underline i})_{kl;m}\,dx^k\wedge dx^l\;,
	\]
where $\rT^{\underline i}_{kl;m}=-\rT^{\underline i}_{lk;m}$ and $(\rT^{\underline i})_{kl;m}=-(\rT^{\underline i})_{lk;m}$. A similar notation is used for the $\rnabla$-Christoffel symbols: $\rnabla_{\partial_k}\partial_j=\rGamma^i_{jk}\partial_i$ and $\rnabla_{\partial_k}s_j=\rGamma^{\underline i}_{\underline{j}k}s_i$, and therefore $\rnabla_{\partial_k}dx^j=-\rGamma^j_{ik}dx^i$ and $\rnabla_{\partial_k}\theta^j=-\rGamma^{\underline{j}}_{\underline{i}k}\theta^i$. By the definition of $s_1,\dots,s_n$, we have
	\begin{gather}
		\rGamma^{\underline i}_{\underline{j}k}(0)=0\;,
		\label{rGamma^underline i_underline j k(0)=0}\\
		\rGamma^{\underline i}_{\underline{j}k}(x',0)=0\quad\forall x'\in B'\quad\text{if}\quad k>n'\;.
		\label{rGamma^underline i_underline j k(x',0)=0}
	\end{gather}
Furthermore
	\begin{equation}\label{rT^underline i_;m}
		\rT^{\underline i}_{kl;m}
		=(\rT^{\underline i})_{kl;m}+\rT^{\underline j}_{kl}\,\rGamma^{\underline i}_{\underline{j}m}
	\end{equation}
because
	\[
		\rT^{\underline i}_{;m}s_i=\rnabla_{\partial_m}(\rT^{\underline j}s_j)
		=\rnabla_{\partial_m}\rT^{\underline j}\;s_j+\rT^{\underline j}\,\rnabla_{\partial_m}s_j
		=((\rT^{\underline i})_{;m}+\rT^{\underline j}\,\rGamma^{\underline i}_{\underline{j}m})s_i\;.
	\]
The functions $a^i_j$ and $b^i_j$ also relate the tensor coefficients and $\rnabla$-Cristoffel symbols with respect to $\partial_1,\dots,\partial_n$ and $s_1,\dots,s_n$; for instance,
	\begin{gather}
		\rT^i_{kl;m_1\dots m_r}=\rT^{\underline{\alpha}}_{kl;m_1\dots m_r}b^\alpha_i\;,\quad
		\rT^{\underline{i}}_{kl;m_1\dots m_r}
		=\rT^{\underline{i}}_{\underline{\alpha}\underline{\beta};m_1\dots m_r}a_k^\alpha a_l^\beta\;,
		\label{rT^underline i_kl}\\
		\rT^{\underline{i}}_{\underline{k}\underline{l};m_1\dots m_r}
		=\rT^{\underline{i}}_{\underline{k}\underline{l};
		\underline{\gamma_1}\dots\underline{\gamma_r}}
		a^{\gamma_1}_{m_1}\cdots a^{\gamma_r}_{m_r}\;,
		\label{rT^underline i_kl;m_1 dots m_r}\\	
		\rR^i_{jkl;m_1\dots m_r}
		=\rR^{\underline{\alpha}}_{\underline{\beta}kl;m_1\dots m_r}
		b_i^\alpha a_j^\beta\;,\quad			
		\rR^{\underline{i}}_{\underline{j}kl;m_1\dots m_r}
		=\rR^{\underline{i}}_{\underline{j}\underline{\alpha}\underline{\beta};m_1\dots m_r}
		a_k^\alpha a_l^\beta\;,
		\label{rR^i_jkl}\\
		\rR^{\underline{i}}_{\underline{j}\underline{k}\underline{l};m_1\dots m_r}
		=\rR^{\underline{i}}_{\underline{j}\underline{k}\underline{l};
		\underline{\gamma_1}\dots\underline{\gamma_r}}
		a^{\gamma_1}_{m_1}\cdots a^{\gamma_r}_{m_r}\;,
		\label{rR^i_jkl;m_1 dots m_r}\\
		\rGamma^i_{jk}=\rGamma^{\underline{\alpha}}_{jk}b^i_\alpha\;,\quad
		\rGamma^{\underline{i}}_{jk}=\partial_ka^i_j
		+\rGamma^{\underline{i}}_{\underline{\alpha}k}a^\alpha_j\;.
		\label{rGamma^i_jk}
	\end{gather}

The connection $1$-forms of $\rnabla$, $\theta^{\underline{i}}_{\underline{j}}$ with respect to $s_1,\dots,s_n$ and $\theta^i_j$ with respect to $\partial_1,\dots,\partial_n$, are given by\footnote{The index notation of the connection forms is different from \cite{AtiyahBottPatodi1973}.} $\rnabla s_j=\theta^{\underline{i}}_{\underline{j}}s_i$ and $\rnabla\partial_j=\theta^i_j\partial_i$, using again the non-underlined/underlined index notation; thus $\theta^{\underline{i}}_{\underline{j}}=\rGamma^{\underline i}_{\underline{j}k}\,dx^k$ and $\theta^i_j=\rGamma^i_{jk}\,dx^k$. Observe that $\theta^{\underline{i}}_{\underline{j}}=-\theta^{\underline{j}}_{\underline{i}}$ and $\theta^i_j=-\theta^j_i$ because $\rnabla$ is a metric connection, and 
	\begin{equation}\label{theta^i_j=0 if i le n' and j>n'}
		\theta^{\underline{i}}_{\underline{j}}=\theta^{\underline{j}}_{\underline{i}}=0\quad\text{if}\quad i\le n'\quad\text{and}\quad j>n'
	\end{equation}
because $\rnabla$ preserves the spaces of horizontal and vertical vector fields. 
The following well known expressions can be easily checked, either directly, or by using the structure equations on the principal frame bundle \cite[Chap.~III, Theorem~2.4 and p.~145]{KobayashiNomizu1963-I}:
	\begin{alignat}{2}
		d\theta^i&=-\theta^{\underline{i}}_{\underline{j}}\wedge\theta^j+\rT^{\underline i}\;,&\quad
		0&=-\theta^i_j\wedge dx^j+\rT^i\;,\label{d theta^i}\\
		d\theta^{\underline{i}}_{\underline{j}}&=-\theta^{\underline{i}}_{\underline{k}}\wedge\theta^{\underline{k}}_{\underline{j}}+\rR^{\underline i}_{\underline j}\;,&\quad
		d\theta^i_j&=-\theta^i_k\wedge\theta^k_j+\rR^k_j\;.
		\label{d theta^i_j - theta^i_k wedge theta^k_j}
	\end{alignat}

Consider the normal and leafwise radial vector fields, $\RR'=x^{\prime i}\,\partial'_i$ and $\RR''=x^{\prime\prime i}\,\partial''_i$. Also, let $r'$ and $r''$ be the normal and leafwise radius functions on $U$, determined by ${r'}^2=x^{\prime i}\,x^{\prime i}$ and ${r''}^2=x^{\prime\prime i}\,x^{\prime\prime i}$. It is elementary that
	\begin{alignat*}{3}
		\RR'x^{\prime i}&=x^{\prime i}\;,&\quad\RR'x^{\prime\prime i}&=0\;,&\quad
		\RR'r'&=r'\;,\\
		\RR''x^{\prime\prime i}&=x^{\prime\prime i}\;,&\quad\RR''x^{\prime i}&=0\;,&\quad
		\RR''r''&=r''\;,
	\end{alignat*}
obtaining
	\begin{gather}
		\LL_{\RR'}dx^{\prime i}=dx^{\prime i}\;,\quad
		\LL_{\RR''}dx^{\prime\prime i}=dx^{\prime\prime i}\;,\quad
		\LL_{\RR'}dx^{\prime\prime i}=\LL_{\RR''}dx^{\prime i}=0\;,\label{LL_RR' dx^prime i = dx^prime i}\\
		\RR'(x^{\prime i}/r')=\LL_{\RR'}(dx^{\prime i}/r')
		=\RR''(x^{\prime\prime i}/r'')=\LL_{\RR''}(dx^{\prime\prime i}/r'')=0\;,\label{RR' x^prime i /r' =0}\\
		\LL_{\RR'}(dx^{\prime\prime i}/r')=-dx^{\prime\prime i}/r'\;,\quad
		\LL_{\RR''}(dx^{\prime i}/r'')=-dx^{\prime i}/r''\;.\label{RR'(x^prime i /r')=0}
	\end{gather}
On $B'\times\{0\}$, we have
	\begin{equation}\label{theta^i_j(RR') = 0}
		\theta^{\prime i}(\RR')=x^{\prime i}\;,\quad
		\theta^{\prime\prime i}(\RR')=\theta^{\underline{i}}_{\underline{j}}(\RR')=\theta^i_j(\RR')=0\;,
	\end{equation}
and, on the whole of $U$,
	\begin{equation}\label{theta^i_j(RR'') = 0}
		\theta^{\prime\prime i}(\RR'')=x^{\prime\prime i}\;,\quad
		\theta^{\prime i}(\RR'')=\theta^{\underline{i}}_{\underline{j}}(\RR'')=\theta^i_j(\RR'')=0\;.
	\end{equation}
The first equality of~\eqref{theta^i_j(RR') = 0} holds because, for $x\in B'\times\{0\}$ and $0\le t\le1$, since $r'(tx)=t\,r'(x)$ and $\RR'/r'$ is $\rnabla$-parallel along the geodesics orthogonal to the leaves emanating from $p$, we get
	\[
		(\theta^{\prime i}(\RR'/r'))(x)=(\theta^{\prime i}(\RR'/r'))(tx)
		=\frac{x^{\prime j}}{r'(x)}(\theta^{\prime i}(\partial'_j))(tx)\;,
	\]
which converges to $\frac{x^{\prime i}}{r'(x)}$ as $t\to0$. The same kind of argument proves the first equality of~\eqref{theta^i_j(RR'') = 0}. The other equalities of~\eqref{theta^i_j(RR') = 0} and~\eqref{theta^i_j(RR'') = 0} follow directly from the given definitions.

\begin{prop}\label{p:equations relating Cristoffel symbols and curvature, underlined}
	On $B'\times\{0\}$, we have
		\begin{alignat}{2}
			\RR'\rGamma^{\underline i}_{\underline{j}k}+\rGamma^{\underline i}_{\underline{j}k}
			&=\rR^{\underline i}_{\underline{j}lk}\,x^{\prime l}\quad
			&\text{if}\quad k\le n'\;,\label{RR' rGamma, k le n', underlined}\\
			\RR'\rGamma^{\underline i}_{\underline{j}k}
			&=\rR^{\underline i}_{\underline{j}lk}\,x^{\prime l}\quad
			&\text{if}\quad k>n'\;,\label{RR' rGamma, k>n', underlined}
		\end{alignat}	
	and, on $B'\times B''$,
		\begin{alignat}{2}
			\RR''\rGamma^{\underline i}_{\underline{j}k}
			&=\rR^{\underline i}_{\underline{j}lk}\,x^{\prime\prime l}\quad
			&\text{if}\quad k\le n'\;,\label{RR'' rGamma, k le n', underlined}\\
			\RR''\rGamma^{\underline i}_{\underline{j}k}+\rGamma^{\underline i}_{\underline{j}k}
			&=\rR^{\underline i}_{\underline{j}lk}\,x^{\prime\prime l}\quad
			&\text{if}\quad k>n'\;.\label{RR'' rGamma, k>n', underlined}
		\end{alignat}
\end{prop}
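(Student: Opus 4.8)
The plan is to recognize these four identities as the radial transport equations for the connection $1$-forms $\theta^{\underline i}_{\underline j}=\rGamma^{\underline i}_{\underline{j}k}\,dx^k$ of $\rnabla$ in the adapted frame, and to read them off from the second structure equation \eqref{d theta^i_j - theta^i_k wedge theta^k_j} by contracting with the radial fields. First I would apply Cartan's formula $\LL_{\RR}=\iota_{\RR}\,d+d\,\iota_{\RR}$ to $\theta^{\underline i}_{\underline j}$, with $\RR$ equal to $\RR'$ or $\RR''$, and substitute \eqref{d theta^i_j - theta^i_k wedge theta^k_j} into the $\iota_{\RR}\,d\theta^{\underline i}_{\underline j}$ term. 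The two ingredients that make everything collapse are: the radial gauge for the frame (equations \eqref{theta^i_j(RR') = 0} and \eqref{theta^i_j(RR'') = 0}), which forces $\iota_{\RR}\theta^{\underline i}_{\underline j}=0$ and hence kills both $d\,\iota_{\RR}\theta^{\underline i}_{\underline j}$ and the contracted quadratic term $\iota_{\RR}(\theta^{\underline i}_{\underline k}\wedge\theta^{\underline k}_{\underline j})$; and the Lie-derivative rules \eqref{LL_RR' dx^prime i = dx^prime i} for $dx^{\prime i}$ and $dx^{\prime\prime i}$, which produce the undifferentiated $\rGamma^{\underline i}_{\underline{j}k}$ terms on the left-hand side.

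For $\RR''$ this runs globally on $U$. Since $\iota_{\RR''}\theta^{\underline i}_{\underline j}=0$ everywhere by \eqref{theta^i_j(RR'') = 0}, the identity reduces to $\LL_{\RR''}\theta^{\underline i}_{\underline j}=\iota_{\RR''}\rR^{\underline i}_{\underline j}$. Expanding the left-hand side with $\LL_{\RR''}dx^{\prime\prime i}=dx^{\prime\prime i}$ and $\LL_{\RR''}dx^{\prime i}=0$, and writing $\iota_{\RR''}\rR^{\underline i}_{\underline j}=\rR^{\underline i}_{\underline{j}lk}\,x^{\prime\prime l}\,dx^k$ (using the antisymmetry of the curvature $2$-form), a comparison of the $dx^k$ coefficients gives \eqref{RR'' rGamma, k le n', underlined} when $k\le n'$ and \eqref{RR'' rGamma, k>n', underlined} when $k>n'$, the extra $\rGamma^{\underline i}_{\underline{j}k}$ appearing exactly in the range where $\LL_{\RR''}dx^k=dx^k$.

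The $\RR'$ identities are where I expect the main obstacle. The corresponding gauge \eqref{theta^i_j(RR') = 0} only holds on the transversal $B'\times\{0\}$, not on a full neighborhood, so the term $d\,\iota_{\RR'}\theta^{\underline i}_{\underline j}$ cannot simply be dropped. Restricting to $B'\times\{0\}$, the tangential ($dx^{\prime}$) part of $d\,\iota_{\RR'}\theta^{\underline i}_{\underline j}$ still vanishes, because $\iota_{\RR'}\theta^{\underline i}_{\underline j}$ is identically zero along $B'\times\{0\}$ and $\RR'$ is tangent to it; combined with the vanishing of the contracted quadratic term this yields \eqref{RR' rGamma, k le n', underlined} for $k\le n'$ exactly as before. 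The leafwise ($dx^{\prime\prime}$) components, i.e. the case $k>n'$, are the delicate point, since there $d\,\iota_{\RR'}\theta^{\underline i}_{\underline j}$ involves normal derivatives off the transversal that are not controlled by the gauge alone. For these I would instead argue directly: by \eqref{rGamma^underline i_underline j k(x',0)=0} one has $\rGamma^{\underline i}_{\underline{j}k}(x',0)=0$ for all $x'$ when $k>n'$, and since $\RR'$ differentiates only along $B'\times\{0\}$ this gives $\RR'\rGamma^{\underline i}_{\underline{j}k}=0$ there; matching this with the curvature contraction and the antisymmetry $\rR^{\underline i}_{\underline{j}kl}=-\rR^{\underline i}_{\underline{j}lk}$ then closes \eqref{RR' rGamma, k>n', underlined}. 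The step that needs the most care is precisely this reconciliation on $B'\times\{0\}$ between the two-stage (transverse, then leafwise) construction of the frame and the single-direction radial gauge it provides.
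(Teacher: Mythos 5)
Your overall strategy --- Cartan's formula applied to $\theta^{\underline i}_{\underline j}$, the structure equation \eqref{d theta^i_j - theta^i_k wedge theta^k_j}, the radial gauge, and \eqref{LL_RR' dx^prime i = dx^prime i} --- is exactly the paper's proof, and your treatment of \eqref{RR' rGamma, k le n', underlined}, \eqref{RR'' rGamma, k le n', underlined} and \eqref{RR'' rGamma, k>n', underlined} is correct; indeed it is more scrupulous than the paper's own two-line argument, which on $B'\times\{0\}$ simply writes $\LL_{\RR'}\theta^{\underline i}_{\underline j}=\iota_{\RR'}d\theta^{\underline i}_{\underline j}$ on the strength of \eqref{theta^i_j(RR') = 0} alone, i.e.\ drops $d\,\iota_{\RR'}\theta^{\underline i}_{\underline j}$ in \emph{all} components and treats the $\RR'$ case exactly like the $\RR''$ case, without the reconciliation you correctly identify as necessary.

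The gap is your closing step for \eqref{RR' rGamma, k>n', underlined}. You rightly deduce from \eqref{rGamma^underline i_underline j k(x',0)=0} and the tangency of $\RR'$ to $B'\times\{0\}$ that $\RR'\rGamma^{\underline i}_{\underline{j}k}=0$ there for $k>n'$; but then \eqref{RR' rGamma, k>n', underlined} asserts precisely that $\rR^{\underline i}_{\underline{j}lk}\,x^{\prime l}=0$ along the transversal, and the antisymmetry $\rR^{\underline i}_{\underline{j}kl}=-\rR^{\underline i}_{\underline{j}lk}$ cannot deliver this: since $l\le n'<k$, it only exchanges the two mixed slots and never kills a mixed component. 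These are the components $\theta^i(\rR(\RR',\partial''_k)s_j)$ of $\rR$ evaluated on one horizontal and one vertical argument, and nothing in the construction makes them vanish; by \eqref{rR(X,V)} they are generically nonzero. Concretely, take $M=\R_u\times\R^2_v$ foliated by the fibers of $(u,v)\mapsto u$ with the bundle-like metric $g=du^2+e^{2uv^1}\bigl((dv^1)^2+(dv^2)^2\bigr)$ and $p$ the origin: the transversal is the $u$-axis, the frame $\partial_u,\partial_{v^1},\partial_{v^2}$ is $\rnabla$-parallel and orthonormal along it, and a direct computation gives $\rR(\partial_u,\partial_{v^2})\partial_{v^1}=\partial_{v^2}$ there, so the right-hand side of \eqref{RR' rGamma, k>n', underlined} equals $u\ne0$ while your left-hand side is $0$.

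In other words, your two correct computations contradict the identity you are trying to prove, and the missing step is not something antisymmetry can wave through. Carried to its end, your own Cartan computation on $B'\times\{0\}$ yields, in the $dx^{\prime\prime k}$-components, $\RR'\rGamma^{\underline i}_{\underline{j}k}=\rR^{\underline i}_{\underline{j}lk}x^{\prime l}+\partial''_k\bigl(\rGamma^{\underline i}_{\underline{j}l}x^{\prime l}\bigr)$: the term $d\,\iota_{\RR'}\theta^{\underline i}_{\underline j}$ you flagged does not vanish in these components but exactly cancels the curvature contraction (as one can cross-check by differentiating \eqref{rGamma, B' times B'', k le n'} at $x''=0$). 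So no completion of your argument --- nor the paper's, which silently discards the same term --- establishes \eqref{RR' rGamma, k>n', underlined} in the stated form; what is actually true on $B'\times\{0\}$ for $k>n'$, and all that the later corollaries need there, is the vanishing $\RR'\rGamma^{\underline i}_{\underline{j}k}=0$ that your direct argument supplies.
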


\begin{proof}
	On $B'\times\{0\}$, by~\eqref{theta^i_j(RR') = 0} and~\eqref{d theta^i_j - theta^i_k wedge theta^k_j},
		\begin{equation}\label{LL_RR' theta^i_j}
			\LL_{\RR'}\theta^{\underline{i}}_{\underline{j}}=\iota_{\RR'}d\theta^{\underline{i}}_{\underline{j}}
			=\iota_{\RR'}(d\theta^{\underline{i}}_{\underline{j}}+\theta^{\underline{i}}_{\underline{k}}\wedge\theta^{\underline{k}}_{\underline{j}})
			=\iota_{\RR'}\rR^{\underline i}_{\underline j}
			=\rR^{\underline i}_{\underline{j}kl}x^{\prime k}\,dx^l\;,
		\end{equation}
	and, by~\eqref{LL_RR' dx^prime i = dx^prime i},
		\[
			\LL_{\RR'}\theta^{\underline{i}}_{\underline{j}}=\LL_{\RR'}(\rGamma^{\underline i}_{\underline{j}k}\,dx^k)
			=\RR'\rGamma^{\underline i}_{\underline{j}k}\,dx^k
			+\rGamma^{\underline i}_{\underline{j}k}\,dx^{\prime k}\;,
		\]
	giving~\eqref{RR' rGamma, k le n', underlined} and~\eqref{RR' rGamma, k>n', underlined}. The equalities~\eqref{RR'' rGamma, k le n', underlined} and~\eqref{RR'' rGamma, k>n', underlined} can be proved with similar arguments, using~\eqref{theta^i_j(RR'') = 0},~\eqref{d theta^i_j - theta^i_k wedge theta^k_j} and~\eqref{LL_RR' dx^prime i = dx^prime i}.
\end{proof}

For any $h\in C^\infty(B'\times\{0\})$ and $x\in B'\times\{0\}$, the function $f(t)=h(tx)$ satisfies
	\begin{equation}\label{tf'(t)=RR'h(tx)}
		tf'(t)=tx^{\prime l}\partial'_lh(tx)=\RR'h(tx)\;.
	\end{equation}
Similarly, for any $h\in C^\infty(B'\times B'')$ and $x=(x',x'')\in B'\times\{0\}$, the function $f(t)=h(x',tx'')$ satisfies
	\begin{equation}\label{tf'(t)=RR''h(x',tx'')}
		tf'(t)=tx^{\prime\prime l}\partial''_lh(x',tx'')=\RR''h(x',tx'')\;.
	\end{equation}

\begin{cor}\label{c:equations relating Cristoffel symbols and curvature, underlined}
	For $x\in B'\times\{0\}$, we have 
		\begin{alignat}{2}
			\rGamma^{\underline{i}}_{\underline{j}k}(x)
			&=x^{\prime l}\int_0^1\tau\rR^{\underline{i}}_{\underline{j}lk}(\tau x)\,d\tau\quad
			&\text{if}\quad k\le n'\;,\label{rGamma, B' times 0, k le n'}\\
			\rGamma^{\underline{i}}_{\underline{j}k}(x)
			&=x^{\prime l}\int_0^1\rR^{\underline{i}}_{\underline{j}lk}(\tau x)\,d\tau\quad
			&\text{if}\quad k>n'\;.\label{rGamma, B' times 0, k > n'}\\
		\intertext{For $x=(x',x'')\in B'\times B''$, we have}
			\rGamma^{\underline{i}}_{\underline{j}k}(x)
			&=\rGamma^{\underline{i}}_{\underline{j}k}(x',0)
			+x^{\prime\prime l}\int_0^1\rR^{\underline{i}}_{\underline{j}lk}(x',\tau x'')\,d\tau\quad
			&\text{if}\quad k\le n'\;,\label{rGamma, B' times B'', k le n'}\\
			\rGamma^{\underline{i}}_{\underline{j}k}(x)
			&=x^{\prime\prime l}\int_0^1\tau\rR^{\underline{i}}_{\underline{j}lk}(x',\tau x'')\,d\tau\quad
			&\text{if}\quad k>n'\;.\label{rGamma, B' times B'', k > n'}
		\end{alignat}
\end{cor}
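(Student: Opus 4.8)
The plan is to integrate the four first-order radial ODEs of Proposition~\ref{p:equations relating Cristoffel symbols and curvature, underlined} along straight rays through the origin, converting each radial derivative into an ordinary $t$-derivative by means of the elementary identities~\eqref{tf'(t)=RR'h(tx)} and~\eqref{tf'(t)=RR''h(x',tx'')}. The whole argument is a one-variable reduction: each statement becomes a linear ODE in $t$ that can be integrated over $[0,1]$ by the fundamental theorem of calculus.

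First I would treat the two identities on $B'\times\{0\}$. Fix $x\in B'\times\{0\}$ and set $f(t)=\rGamma^{\underline i}_{\underline{j}k}(tx)$. Since the transverse coordinates of the point $tx$ are $t\,x^{\prime l}$, evaluating~\eqref{RR' rGamma, k le n', underlined} and~\eqref{RR' rGamma, k>n', underlined} at $tx$ and applying~\eqref{tf'(t)=RR'h(tx)} turns them into
	\[
		tf'(t)+f(t)=t\,x^{\prime l}\rR^{\underline i}_{\underline{j}lk}(tx)\quad(k\le n')\;,\qquad
		tf'(t)=t\,x^{\prime l}\rR^{\underline i}_{\underline{j}lk}(tx)\quad(k>n')\;.
	\]
In the first case the left-hand side is exactly $\frac{d}{dt}(tf(t))$, so integrating over $[0,1]$ and using that $tf(t)$ vanishes at $t=0$ gives~\eqref{rGamma, B' times 0, k le n'}. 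In the second case, after cancelling a factor $t$ the equation reads $f'(t)=x^{\prime l}\rR^{\underline i}_{\underline{j}lk}(tx)$, and integrating from the value $f(0)=\rGamma^{\underline i}_{\underline{j}k}(0)=0$, which holds by~\eqref{rGamma^underline i_underline j k(0)=0}, yields~\eqref{rGamma, B' times 0, k > n'}.

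The two identities on $B'\times B''$ are handled identically, now fixing $x=(x',x'')$, setting $f(t)=\rGamma^{\underline i}_{\underline{j}k}(x',tx'')$, and using~\eqref{tf'(t)=RR''h(x',tx'')} together with~\eqref{RR'' rGamma, k le n', underlined} and~\eqref{RR'' rGamma, k>n', underlined}. When $k\le n'$ the resulting ODE is $f'(t)=x^{\prime\prime l}\rR^{\underline i}_{\underline{j}lk}(x',tx'')$, whose integral starting from $f(0)=\rGamma^{\underline i}_{\underline{j}k}(x',0)$ produces~\eqref{rGamma, B' times B'', k le n'}; when $k>n'$ the left-hand side is again $\frac{d}{dt}(tf(t))$ and integration over $[0,1]$ gives~\eqref{rGamma, B' times B'', k > n'}.

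The only point requiring any care is the apparent singularity at $t=0$. In the two cases where the left-hand side is $\frac{d}{dt}(tf(t))$ there is nothing to worry about, since $tf(t)$ is smooth. In the complementary cases, where I cancel a factor $t$, one should observe that the identity $tf'(t)=t\,g(t)$ holds on all of $[0,1]$ with $g$ continuous; this gives $f'(t)=g(t)$ for $t>0$, and then $f'(0)=g(0)$ by continuity of $f'$ and $g$, so $f'=g$ on $[0,1]$ and the fundamental theorem of calculus applies without obstruction. This is precisely the step where the vanishing~\eqref{rGamma^underline i_underline j k(0)=0} at the origin is genuinely used (in the case $k>n'$ on $B'\times\{0\}$), whereas in the case $k\le n'$ on $B'\times B''$ the boundary value $\rGamma^{\underline i}_{\underline{j}k}(x',0)$ is merely carried along as the constant of integration, explaining its explicit appearance in~\eqref{rGamma, B' times B'', k le n'}.
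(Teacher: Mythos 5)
Your proof is correct and takes essentially the same approach as the paper: both integrate the radial ODEs of Proposition~\ref{p:equations relating Cristoffel symbols and curvature, underlined} along rays through the center via~\eqref{tf'(t)=RR'h(tx)} and~\eqref{tf'(t)=RR''h(x',tx'')}, with the initial condition~\eqref{rGamma^underline i_underline j k(0)=0}, the paper simply setting $f(t)=t\rGamma^{\underline{i}}_{\underline{j}k}(tx)$ outright where you recognize $tf'+f$ as $\frac{d}{dt}(tf)$. Your added remark justifying the cancellation of the factor $t$ at $t=0$ by continuity is a minor point the paper leaves implicit and changes nothing.
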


\begin{proof}
	Fix $x\in B'\times\{0\}$. If $k\le n'$, the function $f(t)=t\rGamma^{\underline{i}}_{\underline{j}k}(tx)$ satisfies
		\[
			f'(t)=(\rGamma^{\underline{i}}_{\underline{j}k}
			+\RR'\rGamma^{\underline{i}}_{\underline{j}k})(tx)
			=tx^{\prime l}\rR^{\underline{i}}_{\underline{j}lk}(tx)
		\]
	by~\eqref{tf'(t)=RR'h(tx)} and~\eqref{RR' rGamma, k le n', underlined}, obtaining~\eqref{rGamma, B' times 0, k le n'}. If $k>n'$, the function $f(t)=\rGamma^{\underline{i}}_{\underline{j}k}(tx)$ satisfies
		\[
			f'(t)=\frac{1}{t}\RR'\rGamma^{\underline{i}}_{\underline{j}k}(tx)
			=x^{\prime l}\rR^{\underline{i}}_{\underline{j}lk}(tx)
		\]
	by~\eqref{tf'(t)=RR'h(tx)} and~\eqref{RR' rGamma, k>n', underlined}, obtaining~\eqref{rGamma, B' times 0, k > n'} since $f(0)=0$ by~\eqref{rGamma^underline i_underline j k(0)=0}.
	
	The proofs of~\eqref{rGamma, B' times B'', k le n'} and~\eqref{rGamma, B' times B'', k > n'} are similar, using~\eqref{tf'(t)=RR''h(x',tx'')},~\eqref{RR'' rGamma, k le n', underlined} and~\eqref{RR'' rGamma, k>n', underlined}.
\end{proof}

For any multi-index $I=(i_1,\dots,i_n)$, with $i_r\in\N$, let  
	\begin{alignat*}{2}
		|I|&=i_1+\dots+i_n\;,&\quad
		\partial_I&=\partial_1^{i_1}\cdots\partial_n^{i_n}\;,\\
		|I|'&=i_1+\dots+i_{n'}\;,&\quad
		\partial'_I&=\partial_1^{i_1}\cdots\partial_{n'}^{i_{n'}}\;,\\
		|I|''&=i_{n'+1}+\dots+i_n\;,&\quad
		\partial''_I&=\partial_{n'+1}^{i_{n'+1}}\cdots\partial_n^{i_n}\;.
	\end{alignat*}
By taking derivatives of~\eqref{rGamma, B' times 0, k le n'}--\eqref{rGamma, B' times B'', k > n'}, we get the following.

\begin{cor}\label{c:equations relating the derivatives of Cristoffel symbols and curvature, underlined}
	For $x_0\in B'\times\{0\}$, we have 
		\begin{align*}
			\partial'_I\rGamma^{\underline{i}}_{\underline{j}k}(x_0)
			&=\int_0^1\tau^{|I|'}
			(\partial'_I(x\mapsto x^{\prime l}\rR^{\underline{i}}_{\underline{j}lk}(x)))(\tau x_0)\,d\tau\\
		\intertext{if $k\le n'$, and}
			\partial'_I\rGamma^{\underline{i}}_{\underline{j}k}(x_0)
			&=\int_0^1\tau^{|I|'-1}
			(\partial'_I(x\mapsto x^{\prime l}\rR^{\underline{i}}_{\underline{j}lk}(x)))(\tau x_0)\,d\tau\\
		\intertext{if $k>n'$ and $|I|'>0$. For $x_0=(x_0',x_0'')\in B'\times B''$, we have}
			\partial_I\rGamma^{\underline{i}}_{\underline{j}k}(x_0)
			&=\partial'_I\rGamma^{\underline{i}}_{\underline{j}k}(x_0',0)
			+x_0^{\prime\prime l}\int_0^1
			(\partial_I(x\mapsto\rR^{\underline{i}}_{\underline{j}lk}(x)))(x_0',\tau x_0'')\,d\tau\\
		\intertext{if $k\le n'$ and $|I|''=0$,}
			\partial_I\rGamma^{\underline{i}}_{\underline{j}k}(x_0)
			&=\partial'_I\rGamma^{\underline{i}}_{\underline{j}k}(x_0',0)\notag\\
			&\phantom{=\text{}}\text{}+\int_0^1\tau^{|I|''-1}
			(\partial_I(x\mapsto x^{\prime\prime l}\rR^{\underline{i}}_{\underline{j}lk}(x)))
			(x_0',\tau x_0'')\,d\tau\\
		\intertext{if $k\le n'$ and $|I|''>0$, and}
			\partial_I\rGamma^{\underline{i}}_{\underline{j}k}(x_0)
			&=\int_0^1\tau^{|I|''}
			(\partial_I(x\mapsto x^{\prime\prime l}\rR^{\underline{i}}_{\underline{j}lk}(x)))(x_0',\tau x_0'')\,d\tau
		\end{align*}
	if $k>n'$.
\end{cor}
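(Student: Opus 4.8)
The plan is to obtain each of the five identities by differentiating the corresponding integral representation in Corollary~\ref{c:equations relating Cristoffel symbols and curvature, underlined} under the integral sign. This is legitimate because, for each fixed base point, the integrands are smooth functions of $(\tau,x)$ on $[0,1]$ times a neighborhood of the base point, so the $\tau$-integrals are smooth in $x$ and may be differentiated termwise to all orders. The key simplification is to absorb the radial prefactors into the curvature factor: setting $\Phi^{\underline{i}}_{\underline{j}k}(x)=x^{\prime l}\rR^{\underline{i}}_{\underline{j}lk}(x)$ and $\Psi^{\underline{i}}_{\underline{j}k}(x)=x^{\prime\prime l}\rR^{\underline{i}}_{\underline{j}lk}(x)$, the elementary identities $\tau\,x^{\prime l}=(\tau x)^{\prime l}$ and $\tau\,x^{\prime\prime l}=(x',\tau x'')^{\prime\prime l}$ let me rewrite each formula as a pure radial average of $\Phi$ or $\Psi$ along the appropriate scaling.

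Concretely, \eqref{rGamma, B' times 0, k le n'} becomes $\rGamma^{\underline{i}}_{\underline{j}k}(x)=\int_0^1\Phi^{\underline{i}}_{\underline{j}k}(\tau x)\,d\tau$ and \eqref{rGamma, B' times 0, k > n'} becomes $\rGamma^{\underline{i}}_{\underline{j}k}(x)=\int_0^1\tau^{-1}\Phi^{\underline{i}}_{\underline{j}k}(\tau x)\,d\tau$, while on $B'\times B''$ the leafwise integral in \eqref{rGamma, B' times B'', k le n'} becomes $\int_0^1\tau^{-1}\Psi^{\underline{i}}_{\underline{j}k}(x',\tau x'')\,d\tau$ and \eqref{rGamma, B' times B'', k > n'} becomes $\rGamma^{\underline{i}}_{\underline{j}k}(x)=\int_0^1\Psi^{\underline{i}}_{\underline{j}k}(x',\tau x'')\,d\tau$. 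I would then apply $\partial'_I$, respectively $\partial_I$. On the transversal the argument is $\tau x$, so the chain rule gives $\partial'_I[\Phi^{\underline{i}}_{\underline{j}k}(\tau x)]=\tau^{|I|'}(\partial'_I\Phi^{\underline{i}}_{\underline{j}k})(\tau x)$, which produces the weights $\tau^{|I|'}$ and $\tau^{|I|'-1}$; on $B'\times B''$ only the leafwise slot is scaled, so $\partial_I[\Psi^{\underline{i}}_{\underline{j}k}(x',\tau x'')]=\tau^{|I|''}(\partial_I\Psi^{\underline{i}}_{\underline{j}k})(x',\tau x'')$, giving the weights $\tau^{|I|''}$ and $\tau^{|I|''-1}$. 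The transversal value $\rGamma^{\underline{i}}_{\underline{j}k}(x',0)$ in \eqref{rGamma, B' times B'', k le n'} depends only on $x'$, so it is differentiated solely by the $x'$-part of $\partial_I$ and contributes $\partial'_I\rGamma^{\underline{i}}_{\underline{j}k}(x_0',0)$ exactly when no leafwise derivative falls on it; in the case $|I|''=0$ the prefactor $x^{\prime\prime l}$ is untouched by $\partial_I=\partial'_I$ and is pulled outside the integral, whereas for $|I|''>0$ it is caught by the product rule and stays inside $\partial_I(x\mapsto x^{\prime\prime l}\rR^{\underline{i}}_{\underline{j}lk})$.

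The only delicate point is the bookkeeping of the powers of $\tau$, and in particular the cancellation of the factor $\tau^{-1}$ created when the radial prefactor is absorbed into $\Phi$ or $\Psi$. This cancellation requires at least one derivative in the scaled variables: it forces the hypothesis $|I|'>0$ in the transverse case $k>n'$ and the split at $|I|''=0$ in the case $k\le n'$ on $B'\times B''$. These are precisely the conditions under which the differentiated integrand, carrying the weight $\tau^{|I|'-1}$ or $\tau^{|I|''-1}$, remains continuous up to $\tau=0$, so that the differentiation under the integral sign performed above is valid and the resulting integrals converge.
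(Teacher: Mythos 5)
Your proposal is correct and is essentially the paper's own (one-line) proof: the paper likewise obtains the corollary by differentiating the integral identities of Corollary~\ref{c:equations relating Cristoffel symbols and curvature, underlined} under the integral sign, and your absorption of the radial prefactors into $\Phi$ and $\Psi$ is a clean way to organize the powers of $\tau$ and to justify the hypothesis $|I|'>0$ in the transverse case $k>n'$.

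One point you mention only in a passing clause deserves to be made explicit, because it exposes a typo in the statement itself. In the case $k\le n'$ and $|I|''>0$ your computation is right: the summand $\rGamma^{\underline{i}}_{\underline{j}k}(x',0)$ of \eqref{rGamma, B' times B'', k le n'} is independent of $x''$, so $\partial_I$ annihilates it, and the fourth identity should read $\partial_I\rGamma^{\underline{i}}_{\underline{j}k}(x_0)=\int_0^1\tau^{|I|''-1}(\partial_I(x\mapsto x^{\prime\prime l}\rR^{\underline{i}}_{\underline{j}lk}(x)))(x_0',\tau x_0'')\,d\tau$, \emph{without} the printed term $\partial'_I\rGamma^{\underline{i}}_{\underline{j}k}(x_0',0)$. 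A quick check: if \eqref{rGamma, B' times B'', k le n'} holds with $\rR^{\underline{i}}_{\underline{j}lk}\equiv1$ and $\rGamma^{\underline{i}}_{\underline{j}k}(x',0)$ not identically zero---possible for $k\le n'$, since \eqref{rGamma^underline i_underline j k(x',0)=0} only applies for $k>n'$---then for $I$ a single leafwise derivative the printed right-hand side exceeds the true value by exactly $\rGamma^{\underline{i}}_{\underline{j}k}(x_0',0)$. So your derivation proves the corrected statement rather than the printed one; the discrepancy is harmless for the application in Theorem~\ref{t:bounded geometry}, where the spurious term is in any case bounded via the first identity, but you should flag the correction explicitly instead of silently producing the right formula. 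All remaining cases, including the legitimacy of differentiating under the integral sign and the continuity of the weights $\tau^{|I|'-1}$ and $\tau^{|I|''-1}$ up to $\tau=0$, are handled correctly.
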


The following result is a version with non-underlined indices of Proposition~\ref{p:equations relating Cristoffel symbols and curvature, underlined}, and follows with the same arguments.

\begin{prop}\label{p:equations relating Cristoffel symbols and curvature, non-underlined}
	On $B'\times\{0\}$, we have
		\begin{alignat*}{2}
			\RR'\rGamma^i_{jk}+\rGamma^i_{jk}
			&=\rR^i_{jlk}\,x^{\prime l}\quad
			&\text{if}\quad k\le n'\;,\\
			\RR'\rGamma^i_{jk}
			&=\rR^i_{jlk}\,x^{\prime l}\quad
			&\text{if}\quad k>n'\;,
		\end{alignat*}	
	and, on $B'\times B''$,
		\begin{alignat*}{2}
			\RR''\rGamma^i_{jk}
			&=\rR^i_{jlk}\,x^{\prime\prime l}\quad
			&\text{if}\quad k\le n'\;,\\
			\RR''\rGamma^i_{jk}+\rGamma^i_{jk}
			&=\rR^i_{jlk}\,x^{\prime\prime l}\quad
			&\text{if}\quad k>n'\;.
		\end{alignat*}
\end{prop}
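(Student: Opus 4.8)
The plan is to repeat \emph{verbatim} the argument of Proposition~\ref{p:equations relating Cristoffel symbols and curvature, underlined}, replacing the connection form $\theta^{\underline i}_{\underline j}$ of the orthonormal frame by the connection form $\theta^i_j=\rGamma^i_{jk}\,dx^k$ of the coordinate frame, and using the second structure equation of~\eqref{d theta^i_j - theta^i_k wedge theta^k_j} in place of the first. Every ingredient needed is already available with non-underlined indices: the vanishing statements~\eqref{theta^i_j(RR') = 0} and~\eqref{theta^i_j(RR'') = 0}, and the Lie-derivative formulas~\eqref{LL_RR' dx^prime i = dx^prime i}.

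Working first on $B'\times\{0\}$, I would note that $\theta^i_j(\RR')=0$ by~\eqref{theta^i_j(RR') = 0}, so Cartan's formula $\LL_{\RR'}=\iota_{\RR'}d+d\,\iota_{\RR'}$ reduces to $\LL_{\RR'}\theta^i_j=\iota_{\RR'}\,d\theta^i_j$. Substituting $d\theta^i_j$ from the second equation of~\eqref{d theta^i_j - theta^i_k wedge theta^k_j}, the quadratic term $\theta^i_k\wedge\theta^k_j$ is annihilated by $\iota_{\RR'}$ because both factors satisfy $\theta^i_k(\RR')=\theta^k_j(\RR')=0$, leaving $\LL_{\RR'}\theta^i_j=\iota_{\RR'}\rR^i_j=\rR^i_{jlk}\,x^{\prime l}\,dx^k$, exactly as in~\eqref{LL_RR' theta^i_j}.

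Next I would expand the left-hand side directly as $\LL_{\RR'}\theta^i_j=(\RR'\rGamma^i_{jk})\,dx^k+\rGamma^i_{jk}\,\LL_{\RR'}dx^k$. By~\eqref{LL_RR' dx^prime i = dx^prime i} we have $\LL_{\RR'}dx^{\prime k}=dx^{\prime k}$ while $\LL_{\RR'}dx^{\prime\prime k}=0$, so matching the coefficient of each $dx^k$ produces the extra summand $\rGamma^i_{jk}$ precisely when $k\le n'$. This yields the first two displayed identities. The two identities on $B'\times B''$ follow from the identical computation with $\RR''$ in place of $\RR'$, now using $\theta^i_j(\RR'')=0$ from~\eqref{theta^i_j(RR'') = 0} together with the $\RR''$ parts of~\eqref{LL_RR' dx^prime i = dx^prime i}; here the roles of $k\le n'$ and $k>n'$ are interchanged, so the undifferentiated term appears for $k>n'$.

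There is no genuine obstacle; the content is entirely formal once the vanishing statements~\eqref{theta^i_j(RR') = 0} and~\eqref{theta^i_j(RR'') = 0} are in hand. The only points requiring a little care are that \emph{both} factors in $\theta^i_k\wedge\theta^k_j$ must be killed by the radial field for the quadratic term to drop out under $\iota_{\RR'}$ (and $\iota_{\RR''}$), and the index bookkeeping in $\LL_{\RR'}dx^k$ and $\LL_{\RR''}dx^k$, which is exactly what accounts for the appearance of the undifferentiated Christoffel symbol in precisely one of the two cases in each pair.
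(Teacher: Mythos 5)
Your proof is correct and is precisely the paper's argument: the paper proves the underlined version via Cartan's formula together with \eqref{theta^i_j(RR') = 0}, \eqref{theta^i_j(RR'') = 0}, \eqref{d theta^i_j - theta^i_k wedge theta^k_j} and \eqref{LL_RR' dx^prime i = dx^prime i}, and then states that the non-underlined proposition ``follows with the same arguments,'' which is exactly the verbatim transcription you carry out. Your points of care --- that \emph{both} factors of $\theta^i_k\wedge\theta^k_j$ must vanish on the radial field, and the bookkeeping in $\LL_{\RR'}dx^k$ and $\LL_{\RR''}dx^k$ producing the undifferentiated Christoffel term in exactly one case of each pair --- match the paper's computation \eqref{LL_RR' theta^i_j} step for step.
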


A direct consequence of Proposition~\ref{p:equations relating Cristoffel symbols and curvature, non-underlined} is
	\begin{gather*}
		\rGamma^i_{jk}(0)=0\;,\\
		\rGamma^i_{jk}(x',0)=0\quad\forall x'\in B'\quad\text{if}\quad k>n'\;.
	\end{gather*}
Then the following corollaries of Proposition~\ref{p:equations relating Cristoffel symbols and curvature, non-underlined} can be proved like Corollaries~\ref{c:equations relating Cristoffel symbols and curvature, underlined} and~\ref{c:equations relating the derivatives of Cristoffel symbols and curvature, underlined}.

\begin{cor}\label{c:equations relating Cristoffel symbols and curvature, non-underlined}
	For $x\in B'\times\{0\}$, we have 
		\begin{alignat*}{2}
			\rGamma^i_{jk}(x)
			&=x^{\prime l}\int_0^1\tau\rR^i_{jlk}(\tau x)\,d\tau\quad
			&\text{if}\quad k\le n'\;,\\
			\rGamma^i_{jk}(x)
			&=x^{\prime l}\int_0^1\rR^i_{jlk}(\tau x)\,d\tau\quad
			&\text{if}\quad k>n'\;.
		\intertext{For $x=(x',x'')\in B'\times B''$, we have}
			\rGamma^i_{jk}(x)
			&=\rGamma^i_{jk}(x',0)
			+x^{\prime\prime l}\int_0^1\rR^i_{jlk}(x',\tau x'')\,d\tau\quad
			&\text{if}\quad k\le n'\;,\\
			\rGamma^i_{jk}(x)
			&=x^{\prime\prime l}\int_0^1\tau\rR^i_{jlk}(x',\tau x'')\,d\tau\quad
			&\text{if}\quad k>n'\;.
		\end{alignat*}
\end{cor}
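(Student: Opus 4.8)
The plan is to mirror the proofs of Corollaries~\ref{c:equations relating Cristoffel symbols and curvature, underlined} and~\ref{c:equations relating the derivatives of Cristoffel symbols and curvature, underlined} verbatim, replacing the underlined quantities $\rGamma^{\underline i}_{\underline j k}$ and $\rR^{\underline i}_{\underline j l k}$ by their non-underlined counterparts $\rGamma^i_{jk}$ and $\rR^i_{jlk}$, and replacing the radial equations~\eqref{RR' rGamma, k le n', underlined}--\eqref{RR'' rGamma, k>n', underlined} by the analogous ones recorded in Proposition~\ref{p:equations relating Cristoffel symbols and curvature, non-underlined}. Each of the four identities will be produced by integrating the appropriate first-order ODE along the radial segment $t\mapsto tx$ (for $x\in B'\times\{0\}$) or $t\mapsto(x',tx'')$ (for $x=(x',x'')\in B'\times B''$), via the elementary relations~\eqref{tf'(t)=RR'h(tx)} and~\eqref{tf'(t)=RR''h(x',tx'')} between the radial derivative of a function and the action of $\RR'$ or $\RR''$.

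Concretely, fix $x\in B'\times\{0\}$. For $k\le n'$ I would set $f(t)=t\,\rGamma^i_{jk}(tx)$; then $f(0)=0$ and, by~\eqref{tf'(t)=RR'h(tx)} together with the first equation of Proposition~\ref{p:equations relating Cristoffel symbols and curvature, non-underlined}, $f'(t)=(\rGamma^i_{jk}+\RR'\rGamma^i_{jk})(tx)=t\,x^{\prime l}\rR^i_{jlk}(tx)$, so integrating from $0$ to $1$ gives the stated formula. For $k>n'$ I would instead put $f(t)=\rGamma^i_{jk}(tx)$, so that $f'(t)=\tfrac1t\,\RR'\rGamma^i_{jk}(tx)=x^{\prime l}\rR^i_{jlk}(tx)$ by~\eqref{tf'(t)=RR'h(tx)} and the second equation of Proposition~\ref{p:equations relating Cristoffel symbols and curvature, non-underlined}; integrating from $f(0)=\rGamma^i_{jk}(0)=0$ yields the result. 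The two identities on $B'\times B''$ are obtained identically, now differentiating in the leafwise direction via~\eqref{tf'(t)=RR''h(x',tx'')} and using the last two equations of Proposition~\ref{p:equations relating Cristoffel symbols and curvature, non-underlined}; for $k\le n'$ the initial value $f(0)=\rGamma^i_{jk}(x',0)$ survives into the formula, whereas for $k>n'$ it is $f(0)=\rGamma^i_{jk}(x',0)=0$ by the vanishing of $\rGamma^i_{jk}$ recorded immediately after Proposition~\ref{p:equations relating Cristoffel symbols and curvature, non-underlined}.

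The only point requiring a little care—and the analogue of the sole subtlety in the underlined case—is the apparent $1/t$ singularity of $f'$ in the cases $k>n'$ on $B'\times\{0\}$ and $k\le n'$ on $B'\times B''$. This is removable: the right-hand side of the relevant radial equation, $x^{\prime l}\rR^i_{jlk}$ (resp.\ $x^{\prime\prime l}\rR^i_{jlk}$), vanishes linearly along the radial direction, so evaluated at $tx$ (resp.\ at $(x',tx'')$) it acquires a factor $t$ that cancels the $1/t$, leaving $f'(t)=x^{\prime l}\rR^i_{jlk}(tx)$ (resp.\ $x^{\prime\prime l}\rR^i_{jlk}(x',tx'')$), which is smooth up to $t=0$; hence $f$ extends smoothly and the fundamental theorem of calculus applies without issue. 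Since Proposition~\ref{p:equations relating Cristoffel symbols and curvature, non-underlined} and the stated vanishing of $\rGamma^i_{jk}$ are already available, no further input is needed, and there is no genuine obstacle beyond bookkeeping of the indices and the radial variable.
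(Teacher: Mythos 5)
Your proposal is correct and coincides with the paper's own proof: the paper disposes of this corollary in one line, stating that it ``can be proved like Corollaries~\ref{c:equations relating Cristoffel symbols and curvature, underlined} and~\ref{c:equations relating the derivatives of Cristoffel symbols and curvature, underlined}'', i.e.\ exactly your mirroring of the underlined argument via Proposition~\ref{p:equations relating Cristoffel symbols and curvature, non-underlined}, the radial relations \eqref{tf'(t)=RR'h(tx)} and \eqref{tf'(t)=RR''h(x',tx'')}, and the vanishing of $\rGamma^i_{jk}$ at $0$ and on $B'\times\{0\}$ for $k>n'$. Your handling of the removable $1/t$ singularity (the factor $t$ coming from evaluating $x^{\prime l}\rR^i_{jlk}$, resp.\ $x^{\prime\prime l}\rR^i_{jlk}$, along the radial segment) is precisely the mechanism implicit in the paper's underlined-case computation, so nothing is missing.
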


\begin{cor}\label{c:equations relating the derivatives of Cristoffel symbols and curvature, non-underlined}
	For $x_0\in B'\times\{0\}$, we have 
		\begin{align*}
			\partial'_I\rGamma^i_{jk}(x_0)
			&=\int_0^1\tau^{|I|'}
			(\partial'_I(x\mapsto x^{\prime l}\rR^i_{jlk}(x)))(\tau x_0)\,d\tau\\
		\intertext{if $k\le n'$ and $|I|'>0$, and}
			\partial'_I\rGamma^i_{jk}(x_0)
			&=\int_0^1\tau^{|I|'-1}
			(\partial'_I(x\mapsto x^{\prime l}\rR^i_{jlk}(x)))(\tau x_0)\,d\tau\\
		\intertext{if $k>n'$. For $x_0=(x_0',x_0'')\in B'\times B''$, we have}
			\partial_I\rGamma^i_{jk}(x_0)
			&=\partial'_I\rGamma^i_{jk}(x_0',0)
			+x_0^{\prime\prime l}\int_0^1
			(\partial_I(x\mapsto\rR^i_{jlk}(x)))(x_0',\tau x_0'')\,d\tau\\
		\intertext{if $k\le n'$ and $|I|''=0$,}
			\partial_I\rGamma^i_{jk}(x_0)
			&=\partial'_I\rGamma^i_{jk}(x_0',0)\notag\\
			&\phantom{=\text{}}\text{}+\int_0^1\tau^{|I|''-1}
			(\partial_I(x\mapsto x^{\prime\prime l}\rR^i_{jlk}(x)))(x_0',\tau x_0'')\,d\tau\\
		\intertext{if $k\le n'$ and $|I|''>0$, and}
			\partial_I\rGamma^i_{jk}(x_0)
			&=\int_0^1\tau^{|I|''}
			(\partial_I(x\mapsto x^{\prime\prime l}\rR^i_{jlk}(x)))(x_0',\tau x_0'')\,d\tau
		\end{align*}
	if $k>n'$.
\end{cor}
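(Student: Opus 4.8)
The plan is to derive this exactly as Corollary~\ref{c:equations relating the derivatives of Cristoffel symbols and curvature, underlined} is derived from Corollary~\ref{c:equations relating Cristoffel symbols and curvature, underlined}: by differentiating under the integral sign the four integral representations of $\rGamma^i_{jk}$ furnished by Corollary~\ref{c:equations relating Cristoffel symbols and curvature, non-underlined}. The device that keeps the computation clean is to absorb the radial factor into the integrand beforehand. Writing $g(y)=y^{\prime l}\rR^i_{jlk}(y)$ and using the homogeneity $(\tau x)^{\prime l}=\tau\,x^{\prime l}$, one has $x^{\prime l}\rR^i_{jlk}(\tau x)=\tau^{-1}g(\tau x)$, so on $B'\times\{0\}$ the representations become $\rGamma^i_{jk}(x)=\int_0^1 g(\tau x)\,d\tau$ for $k\le n'$ and $\rGamma^i_{jk}(x)=\int_0^1\tau^{-1}g(\tau x)\,d\tau$ for $k>n'$; on $B'\times B''$ the factor $x^{\prime\prime l}$ is absorbed in the same way into $g(y)=y^{\prime\prime l}\rR^i_{jlk}(y)$. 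Every integrand is then $g(\tau x)$ or $g(x',\tau x'')$ times an explicit power of $\tau$.

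First I would handle the two formulas on $B'\times\{0\}$, where only transverse derivatives occur. By the chain rule each $\partial'_j$ applied to $g(\tau x)$ brings down a factor $\tau$, so $\partial'_I\bigl(g(\tau x)\bigr)=\tau^{|I|'}(\partial'_I g)(\tau x)$; combining this with the power of $\tau$ left by the absorption yields the exponents $\tau^{|I|'}$ when $k\le n'$ and $\tau^{|I|'-1}$ when $k>n'$. Next I would treat the two formulas on $B'\times B''$, splitting $\partial_I=\partial'_I\partial''_I$: the leafwise derivatives $\partial''_I$ act on the scaled argument $\tau x''$ and contribute $\tau^{|I|''}$, while the transverse derivatives $\partial'_I$ act on the unscaled argument $x'$ and contribute no power of $\tau$; this produces the exponents $\tau^{|I|''}$ for $k>n'$ and $\tau^{|I|''-1}$ for $k\le n'$ with $|I|''>0$, and the derivative-free integrand for $k\le n'$ with $|I|''=0$. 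Differentiation under the integral is justified throughout because $\rR^i_{jlk}\in C^\infty$ and the integration is over the compact interval $[0,1]$.

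Two points need attention. On $B'\times B''$ the lower limit of the radial integration leaves the additive boundary contribution $\rGamma^i_{jk}(x',0)$; its transverse derivative supplies the leading term $\partial'_I\rGamma^i_{jk}(x_0',0)$ in the stated formulas, which I would then rewrite through the already-established $B'\times\{0\}$ identities. The genuinely delicate point is integrability at $\tau=0$ in the cases where absorption has produced a factor $\tau^{-1}$: there the apparent singularity is removable precisely when the radial factor carried by $g$ cancels it, and it is this that the positivity hypotheses $|I|'>0$ or $|I|''>0$ encode. I expect the main obstacle to be bookkeeping rather than anything conceptual---deciding case by case which indices scale with $\tau$, collecting the resulting powers correctly, and checking the behaviour of the integrand as $\tau\to0$---since at bottom everything is the fundamental theorem of calculus applied to the radial identities of Proposition~\ref{p:equations relating Cristoffel symbols and curvature, non-underlined}.
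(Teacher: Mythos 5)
Your route is exactly the paper's route: the corollary is obtained by differentiating the four integral representations of Corollary~\ref{c:equations relating Cristoffel symbols and curvature, non-underlined}, just as Corollary~\ref{c:equations relating the derivatives of Cristoffel symbols and curvature, underlined} is obtained from Corollary~\ref{c:equations relating Cristoffel symbols and curvature, underlined}, and your absorption device $g(y)=y^{\prime l}\rR^i_{jlk}(y)$ together with $\partial'_I\bigl(g(\tau x)\bigr)=\tau^{|I|'}(\partial'_I g)(\tau x)$ and $\partial_I\bigl(g(x',\tau x'')\bigr)=\tau^{|I|''}(\partial_I g)(x',\tau x'')$ is correct bookkeeping. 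The two formulas on $B'\times\{0\}$, the case $k\le n'$ with $|I|''=0$, and the case $k>n'$ on $B'\times B''$ all come out as printed, and your observation that the apparent $\tau^{-1}$ is removable is right: when $|I|'=0$ one has $(\partial'_I(x\mapsto x^{\prime l}\rR^i_{jlk}))(\tau x_0)=\tau\,x_0^{\prime l}\rR^i_{jlk}(\tau x_0)$, so the integrand is smooth. (Minor point: the printed positivity hypotheses actually sit on the cases whose exponents $\tau^{|I|'}$, $\tau^{|I|''-1}$ are already nonnegative, so integrability is never what they are needed for; this part of your discussion is misdirected but harmless.)

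The genuine gap is your sentence asserting that the boundary contribution's ``transverse derivative supplies the leading term $\partial'_I\rGamma^i_{jk}(x_0',0)$ in the stated formulas.'' For $k\le n'$ and $|I|''>0$ this step fails: $\rGamma^i_{jk}(x',0)$ is independent of $x''$, so $\partial_I=\partial'_I\partial''_I$ annihilates it, and your own computation yields $\partial_I\rGamma^i_{jk}(x_0)=\int_0^1\tau^{|I|''-1}\bigl(\partial_I(x\mapsto x^{\prime\prime l}\rR^i_{jlk}(x))\bigr)(x_0',\tau x_0'')\,d\tau$ with \emph{no} boundary term, which is not the identity in the statement. Nor can the printed term be recovered by any correct differentiation: take $n'=n''=1$ and set $\rGamma=x'+x'x''$, $\rR=x'$; these satisfy the radial equation $\RR''\rGamma=\rR\,x''$ of Proposition~\ref{p:equations relating Cristoffel symbols and curvature, non-underlined}, yet for $\partial_I=\partial''$ the left side is $x_0'$ while the printed right side is $\rGamma(x_0',0)+x_0'=2x_0'$. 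So the statement as printed (inherited verbatim from its underlined twin) carries a spurious term in the $k\le n'$, $|I|''>0$ case; it is harmless for the only use made of the corollary, the boundedness induction in Theorem~\ref{t:bounded geometry}, since $\partial'_I\rGamma^i_{jk}(x_0',0)$ is itself uniformly bounded by the slice formulas, but your proposal neither derives that term nor notices the discrepancy. To repair the write-up, either drop the boundary term when $|I|''>0$ and prove the corrected identity, or keep the paper's formula and explicitly flag that the extra term is spurious as an identity while preserving all the estimates it is used for.
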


Let
	\[
		F^i_{jkl}
		=\rGamma^m_{kl}\rT^{\underline i}_{mj}
		-\rGamma^m_{kj}\rT^{\underline i}_{ml}+(\rT^{\underline i})_{kj;l}
		-(\rT^{\underline i})_{kl;j}+
		\rT^m_{lj}\,\rT^{\underline i}_{km}\;.
	\]

\begin{prop}\label{p:equations relating a^i_j, curvature and torsion}
	On $B'\times\{0\}$, we have
		\begin{alignat}{2}
			({\RR'}^2+\RR')a^i_j
			&=(\rR^{\underline i}_{\underline{k}lj}+F^i_{jkl})x^{\prime l}x^{\prime k}
			+\rT^{\underline i}_{kj}x^{\prime k}&\quad\text{if}\quad &i,j\le n'\;,
			\label{RR' ^2 a^i_j+RR' a^i_j, i,j le n'}\\
			({\RR'}^2+\RR')a^i_j
			&=F^i_{jkl}x^{\prime l}x^{\prime k}
			+\rT^{\underline i}_{kj}x^{\prime k}&\quad\text{if}\quad &j\le n'<i\;,
			\label{RR' ^2 a^i_j+RR' a^i_j, j le n' < i}\\
			({\RR'}^2-\RR')a^i_j
			&=(\rR^{\underline i}_{\underline{k}lj}
			+F^i_{jkl})x^{\prime l}x^{\prime k}&\quad\text{if}\quad &i\le n'<j\;,
			\label{RR' ^2 a^i_j-RR' a^i_j, i le n' < j}\\
			({\RR'}^2-\RR')a^i_j
			&=F^i_{jkl}x^{\prime l}x^{\prime k}&\quad\text{if}\quad &n'<i,j\;. 
			\label{RR' ^2 a^i_j-RR' a^i_j, n' < i,j}\\
		\intertext{On $B'\times B''$, we have}
			({\RR''}^2-\RR'')a^i_j
			&=F^i_{jkl}x^{\prime\prime l}x^{\prime\prime k}&\quad\text{if}\quad&i,j\le n'\;,
			\label{RR'' ^2 a^i_j-RR'' a^i_j, i,j le n'}\\
			({\RR''}^2-\RR'')a^i_j
			&=(\rR^{\underline i}_{\underline{k}lj}
			+F^i_{jkl})x^{\prime\prime l}x^{\prime\prime k}&\quad\text{if}\quad&j\le n'<i\;,
			\label{RR'' ^2 a^i_j-RR'' a^i_j, j le n' < i}\\
			({\RR''}^2+\RR'')a^i_j
			&=F^i_{jkl}x^{\prime\prime l}x^{\prime\prime k}&\quad\text{if}\quad&i\le n'<j\;,
			\label{RR'' ^2 a^i_j+RR'' a^i_j, i le n' < j}\\
			({\RR''}^2+\RR'')a^i_j
			&=(\rR^{\underline i}_{\underline{k}lj}
			+F^i_{jkl})x^{\prime\prime l}x^{\prime\prime k}&\quad\text{if}\quad&n'<i,j\;.
			\label{RR'' ^2 a^i_j+RR'' a^i_j, n' < i,j}
		\end{alignat}
\end{prop}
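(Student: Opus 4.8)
The plan is to derive each identity as a second-order radial equation for the frame coefficients $a^i_j=\theta^i(\partial_j)$, obtained by applying the Lie derivative along the radial field twice to the coframe $\theta^i$ and feeding in the structure equations. I will treat the $\RR'$-equations on $B'\times\{0\}$ in detail; the $\RR''$-equations on $B'\times B''$ follow verbatim after exchanging the roles of the two index blocks and using~\eqref{theta^i_j(RR'') = 0} in place of~\eqref{theta^i_j(RR') = 0}.

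First I would record the action of $\LL_{\RR'}$ on $\theta^i$. Writing $\theta^i=a^i_k\,dx^k$ and using~\eqref{LL_RR' dx^prime i = dx^prime i}, one gets $\LL_{\RR'}\theta^i=(\RR'a^i_k+a^i_k)\,dx^{\prime k}+(\RR'a^i_k)\,dx^{\prime\prime k}$, so that $(\LL_{\RR'}\theta^i)(\partial_j)=(\RR'+1)a^i_j$ for $j\le n'$ and $=\RR'a^i_j$ for $j>n'$. Iterating, $(\LL_{\RR'}^2\theta^i)(\partial_j)$ equals $(\RR'^2+2\RR'+1)a^i_j$ for $j\le n'$ and $\RR'^2a^i_j$ for $j>n'$; hence in both cases $(\RR'^2\pm\RR')a^i_j=(\LL_{\RR'}^2\theta^i)(\partial_j)-(\LL_{\RR'}\theta^i)(\partial_j)$, with the sign $+$ when $j\le n'$ and $-$ when $j>n'$. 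This already fixes the left-hand sides and the sign pattern of the four $\RR'$-cases (and, symmetrically, of the four $\RR''$-cases).

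Next I would compute the right-hand side by Cartan's formula. From the first structure equation~\eqref{d theta^i}, $\LL_{\RR'}\theta^i=\iota_{\RR'}d\theta^i+d(\theta^i(\RR'))=-\theta^{\underline i}_{\underline j}(\RR')\,\theta^j+\theta^j(\RR')\,\theta^{\underline i}_{\underline j}+\iota_{\RR'}\rT^{\underline i}+d(\theta^i(\RR'))$. On $B'\times\{0\}$ the radial-gauge vanishing $\theta^{\underline i}_{\underline j}(\RR')=0$ and the values $\theta^{\prime j}(\RR')=x^{\prime j}$, $\theta^{\prime\prime j}(\RR')=0$ from~\eqref{theta^i_j(RR') = 0} reduce the first two terms, leaving the connection $1$-form $\theta^{\underline i}_{\underline j}$ contracted against $x^{\prime j}$ together with the torsion contribution $\iota_{\RR'}\rT^{\underline i}$, whose evaluation on $\partial_j$ is $\rT^{\underline i}_{kj}x^{\prime k}$. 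Applying $\LL_{\RR'}$ once more and using $\LL_{\RR'}d=d\LL_{\RR'}$, the curvature enters solely through $\LL_{\RR'}\theta^{\underline i}_{\underline j}$, which by~\eqref{LL_RR' theta^i_j} equals $\rR^{\underline i}_{\underline j kl}x^{\prime k}\,dx^l$ on $B'\times\{0\}$; this is the origin of the term $\rR^{\underline i}_{\underline k lj}x^{\prime l}x^{\prime k}$. Here I would use the crucial fact that, by~\eqref{theta^i_j(RR') = 0}, on the slice $\RR'$ has the same numerical components $x^{\prime k}$ ($k\le n'$) in the coordinate frame and in the orthonormal frame $s_i$, so that $x^{\prime k}$ legitimately contracts the underlined slot in $\rR^{\underline i}_{\underline k lj}x^{\prime k}$. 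Equivalently, one may bypass re-differentiating the structure equation and simply substitute the first-order Christoffel–curvature relations of Propositions~\ref{p:equations relating Cristoffel symbols and curvature, underlined} and~\ref{p:equations relating Cristoffel symbols and curvature, non-underlined} to convert $\RR'\rGamma$ into curvature.

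The remaining work, which I expect to be the main obstacle, is to show that the leftover lower-order terms assemble into exactly $F^i_{jkl}x^{\prime l}x^{\prime k}+\rT^{\underline i}_{kj}x^{\prime k}$. The covariant derivatives of torsion $(\rT^{\underline i})_{kj;l}$ and $(\rT^{\underline i})_{kl;j}$ come from $\LL_{\RR'}(\iota_{\RR'}\rT^{\underline i})$ together with~\eqref{rT^underline i_;m} (which accounts for the \emph{partial} rather than full covariant derivative, since the $\underline i$-index contributes nothing in the radial gauge); the products $\rGamma^m_{\cdot\cdot}\rT^{\underline i}_{\cdot\cdot}$ arise from expressing coordinate derivatives of torsion components covariantly, and the torsion–torsion term $\rT^m_{lj}\rT^{\underline i}_{km}$ from the corresponding cross terms. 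This step demands careful bookkeeping: conversion between underlined and non-underlined indices via $a^i_j$, $b^i_j$, tracking the cancellation of the first-order torsion against the antisymmetric part of the coordinate Christoffel symbols, and checking the eight index-range cases. The case distinctions are governed by three vanishing phenomena: the sign of the $\RR'$-term is fixed by whether $j$ lies in the radial block; the curvature term $\rR^{\underline i}_{\underline k lj}x^{\prime l}x^{\prime k}$ is automatically absent unless $i$ lies in the appropriate block, since $\rR$ preserves the vertical/horizontal splitting (cf.~\eqref{theta^i_j=0 if i le n' and j>n'}); and the explicit torsion term survives only when $j$ lies in the radial block, being moreover killed in every $\RR''$-equation by $\rT(V,W)=0$ from~\eqref{rT(V,W)}, because the leafwise coordinate fields $\partial''_j$ are vertical throughout $U$.
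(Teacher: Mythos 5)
Your proposal is correct and is essentially the paper's own proof: since $\RR'r'=r'$, your operator $\LL_{\RR'}^2-\LL_{\RR'}$ coincides with the paper's $r'\,\LL_{\RR'}\,\frac{1}{r'}\,\LL_{\RR'}$, and the inputs you feed it --- the structure equation~\eqref{d theta^i}, the radial identities~\eqref{theta^i_j(RR') = 0} and~\eqref{theta^i_j(RR'') = 0}, the relation $\LL_{\RR'}\theta^{\underline{i}}_{\underline{j}}=\rR^{\underline i}_{\underline{j}kl}x^{\prime k}\,dx^l$ of~\eqref{LL_RR' theta^i_j}, and the three vanishing mechanisms governing the eight cases --- are exactly those of the paper. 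The one step you defer, assembling the lower-order terms into $F^i_{jkl}$, is carried out there by expanding $d\,\iota_{\partial'_k}\rT^{\underline i}=dx^j\wedge\breve\nabla_{\partial_j}(\iota_{\partial'_k}\rT^{\underline i})$ with the auxiliary torsion-free connection $\breve\nabla=\rnabla-\frac{1}{2}\rT$, which mechanizes precisely the bookkeeping you anticipate and yields $F^i_{jkl}$ after antisymmetrization; note only that the partial covariant derivatives $(\rT^{\underline i})_{kj;l}$, $(\rT^{\underline i})_{kl;j}$ enter because $d$ treats $\rT^{\underline i}$ as a scalar-valued form (the $\underline i$-slot is never differentiated), not by your radial-gauge argument, which would justify dropping the $s_i$-slot correction only in the radially contracted term $(\rT^{\underline i})_{kj;l}x^{\prime l}$ and not in the free-index term $(\rT^{\underline i})_{kl;j}$.
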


\begin{proof}
	We only prove~\eqref{RR' ^2 a^i_j+RR' a^i_j, i,j le n'}--\eqref{RR' ^2 a^i_j-RR' a^i_j, n' < i,j} because the proof of~\eqref{RR'' ^2 a^i_j-RR'' a^i_j, i,j le n'}--\eqref{RR'' ^2 a^i_j+RR'' a^i_j, n' < i,j} is analogous, observing that $\rT^{\underline i}_{kj}=0$ for $k,j>n'$ by~\eqref{rT(V,W)}. 
	
	On $B'\times\{0\}$,
		\begin{multline*}
			\LL_{\RR'}\theta^{\prime i}=\iota_{\RR'}d\theta^{\prime i}+d\iota_{\RR'}\theta^{\prime i}\\
			=\iota_{\RR'}(-\theta^{\underline{i}}_{\underline{k}}\wedge\theta^k+\rT^{\underline i})+dx^{\prime i}
			=(\theta^{\underline{i}}_{\underline{k}}+\iota_{\partial'_k}\rT^{\underline i})x^{\prime k}+dx^{\prime i}
		\end{multline*}
	 by~\eqref{d theta^i} and~\eqref{theta^i_j(RR') = 0}, and
	 	\[
			\LL_{\RR'}\theta^{\prime\prime i}
			=\iota_{\RR'}d\theta^{\prime\prime i}+d\iota_{\RR'}\theta^{\prime\prime i}
			=\iota_{\RR'}(-\theta^{\underline{i}}_{\underline{k}}\wedge\theta^k+\rT^{\underline i})
			=\iota_{\partial'_k}\rT^{\underline i}x^{\prime k}
		\]
	by~\eqref{d theta^i},~\eqref{theta^i_j(RR') = 0} and~\eqref{theta^i_j=0 if i le n' and j>n'}. Hence, by~\eqref{RR'(x^prime i /r')=0} and~\eqref{LL_RR' theta^i_j},
		\begin{align}
			r'\,\LL_{\RR'}\,\frac{1}{r'}\,\LL_{\RR'}\theta^{\prime i}
			&=(\LL_{\RR'}\theta^{\underline{i}}_{\underline{k}}
			+\LL_{\RR'}\iota_{\partial'_k}\rT^{\underline i})x^{\prime k}\notag\\
			&=(\rR^{\underline i}_{\underline{k}lj}x^{\prime l}\,dx^j
			+\iota_{\RR'}d\iota_{\partial'_k}\rT^{\underline i}
			+d\iota_{\RR'}\iota_{\partial'_k}\rT^{\underline i})x^{\prime k}\;,
			\label{1st expression of r' LL_RR' frac 1 r' LL_RR' theta^prime i}\\
			r'\,\LL_{\RR'}\,\frac{1}{r'}\,\LL_{\RR'}\theta^{\prime\prime i}
			&=\LL_{\RR'}\iota_{\partial'_k}\rT^{\underline i}\;x^{\prime k}
			=(\iota_{\RR'}d\iota_{\partial'_k}\rT^{\underline i}
			+d\iota_{\RR'}\iota_{\partial'_k}\rT^{\underline i})x^{\prime k}\;.
			\label{1st expression of r' LL_RR' frac 1 r' LL_RR' theta^prime prime i}
		\end{align}
	Since $\breve\nabla$ is torsion free, we have \cite[Proposition~4.1]{Poor1981}
		\begin{equation}\label{1st expression of d iota_partial'_k rT^underline i}
			d\iota_{\partial'_k}\rT^{\underline i}
			=dx^j\wedge\breve\nabla_{\partial_j}(\iota_{\partial'_k}\rT^{\underline i})
			=dx^j\wedge(\iota_{\breve\nabla_{\partial_j}\partial'_k}\rT^{\underline i}
			+\iota_{\partial'_k}\breve\nabla_{\partial_j}\rT^{\underline i})\;.
		\end{equation}
	But
		\[
			\breve\nabla_{\partial_j}\partial'_k=\rnabla_{\partial_j}\partial'_k-\frac{1}{2}\rT(\partial_j,\partial'_k)
			=(\rGamma^m_{kj}-\rT^m_{jk}/2)\partial_m\;,
		\]
	and
		\begin{align*}
			(\breve\nabla_{\partial_j}\rT^{\underline i})(\partial_\alpha,\partial_\beta)
			&=\partial_j(\rT^{\underline i}(\partial_\alpha,\partial_\beta))
			-\rT^{\underline i}(\breve\nabla_{\partial_j}\partial_\alpha,\partial_\beta)
			-\rT^{\underline i}(\partial_\alpha,\breve\nabla_{\partial_j}\partial_\beta)\\
			&=\partial_j(\rT^{\underline i}(\partial_\alpha,\partial_\beta))
			-\rT^{\underline i}(\rnabla_{\partial_j}\partial_\alpha,\partial_\beta)
			-\rT^{\underline i}(\partial_\alpha,\rnabla_{\partial_j}\partial_\beta)\\
			&\phantom{=\text{}}\text{}
			+\frac{1}{2}\left(\rT^{\underline i}(\rT(\partial_j,\partial_\alpha),\partial_\beta)
			+\rT^{\underline i}(\partial_\alpha,\rT(\partial_j,\partial_\beta))\right)\\
			&=(\rnabla_{\partial_j}\rT^{\underline i})(\partial_\alpha,\partial_\beta)
			+(\rT^m_{j\alpha}\,\rT^{\underline i}_{m\beta}
			+\rT^m_{j\beta}\,\rT^{\underline i}_{\alpha m})/2\\
			&=(\rT^{\underline i})_{\alpha\beta;j}
			+(\rT^m_{j\alpha}\,\rT^{\underline i}_{m\beta}
			+\rT^m_{j\beta}\,\rT^{\underline i}_{\alpha m})/2\;,
		\end{align*}
	yielding
		\[
			\breve\nabla_{\partial_j}\rT^{\underline i}
			=\frac{1}{2}\left((\rT^{\underline i})_{\alpha\beta;j}
			+(\rT^m_{j\alpha}\,\rT^{\underline i}_{m\beta}
			+\rT^m_{j\beta}\,\rT^{\underline i}_{\alpha m})/2\right)dx^\alpha\wedge dx^\beta\;.
		\]
	So
		\begin{align*}
			\iota_{\breve\nabla_{\partial_j}\partial'_k}\rT^{\underline i}
			&=(\rGamma^m_{kj}-\rT^m_{jk}/2)\rT^{\underline i}_{m\beta}\,dx^\beta\;,\\
			\iota_{\partial'_k}\breve\nabla_{\partial_j}\rT^{\underline i}
			&=\left((\rT^{\underline i})_{k\beta;j}
			+(\rT^m_{jk}\,\rT^{\underline i}_{m\beta}
			+\rT^m_{j\beta}\,\rT^{\underline i}_{km})/2\right)dx^\beta\;,
		\end{align*}
	obtaining
		\begin{equation}\label{d iota_partial'_k rT^underline i}
			d\iota_{\partial'_k}\rT^{\underline i}
			=\left(\rGamma^m_{kj}\rT^{\underline i}_{m\beta}+(\rT^{\underline i})_{k\beta;j}
			+\rT^m_{j\beta}\,\rT^{\underline i}_{km}/2\right)dx^j\wedge dx^\beta
		\end{equation}
	by~\eqref{1st expression of d iota_partial'_k rT^underline i}, and therefore
		\begin{equation}\label{iota_RR' d iota_partial'_k rT^underline i}
			\iota_{\RR'}d\iota_{\partial'_k}\rT^{\underline i}=F^i_{jkl}x^{\prime l}\,dx^j\;.
		\end{equation}
	Moreover
		\begin{equation}\label{d iota_RR' iota_partial'_k rT^underline i x^prime k}
			d\iota_{\RR'}\iota_{\partial'_k}\rT^{\underline i}\;x^{\prime k}
			=-\iota_{\RR'}\iota_{\partial'_k}\rT^{\underline i}\,dx^{\prime k}
			=-\rT^{\underline i}_{kl}x^{\prime l}\,dx^{\prime k}
			=\rT^{\underline i}_{lk}x^{\prime l}\,dx^{\prime k}
		\end{equation}
	because $\iota_{\RR'}\iota_{\partial'_k}\rT^{\underline i}\;x^{\prime k}=\iota_{\RR'}\iota_{\RR'}\rT^{\underline i}=0$.
	Then, by~\eqref{1st expression of r' LL_RR' frac 1 r' LL_RR' theta^prime i},~\eqref{1st expression of r' LL_RR' frac 1 r' LL_RR' theta^prime prime i},~\eqref{iota_RR' d iota_partial'_k rT^underline i} and~\eqref{d iota_RR' iota_partial'_k rT^underline i x^prime k},
		\begin{align}
			r'\,\LL_{\RR'}\,\frac{1}{r'}\,\LL_{\RR'}\theta^{\prime i}
			&=((\rR^{\underline i}_{\underline{k}lj}+F^i_{jkl})x^{\prime l}
			+\rT^{\underline i}_{kj})x^{\prime k}\,dx^{\prime j}\notag\\
			&\phantom{=\text{}}\text{}+(\rR^{\underline i}_{\underline{k}lj}
			+F^i_{jkl})x^{\prime l}x^{\prime k}\,dx^{\prime\prime j}\;,
			\label{r' LL_RR' frac 1 r' LL_RR' theta^prime i}\\
			r'\,\LL_{\RR'}\,\frac{1}{r'}\,\LL_{\RR'}\theta^{\prime\prime i}
			&=(F^i_{jkl}x^{\prime l}
			+\rT^{\underline i}_{kj})x^{\prime k}\,dx^{\prime j}\notag\\
			&\phantom{=\text{}}\text{}+F^i_{jkl}x^{\prime l}x^{\prime k}\,dx^{\prime\prime j}\;.
			\label{r' LL_RR' frac 1 r' LL_RR' theta^prime prime i}
		\end{align}
	On the other hand,
		\begin{multline*}
			\LL_{\RR'}\theta^i=\LL_{\RR'}(a^i_j\,dx^j)=\RR'a^i_j\;dx^j+a^i_j\LL_{\RR'}dx^j\\
			=\RR'a^i_j\;dx^j+a^i_j\,dx^{\prime j}=(\RR'a^i_j+a^i_j)\,dx^{\prime j}+\RR'a^i_j\,dx^{\prime\prime j}
		\end{multline*}
	by~\eqref{LL_RR' dx^prime i = dx^prime i}, yielding
		\begin{equation}\label{r' LL_RR' frac 1 r' LL_RR' theta^i}
			r'\,\LL_{\RR'}\,\frac{1}{r'}\,\LL_{\RR'}\theta^i=({\RR'}^2a^i_j+\RR'a^i_j)\,dx^{\prime j}
			+({\RR'}^2a^i_j-\RR'a^i_j)\,dx^{\prime\prime j}
		\end{equation}
	by~\eqref{RR' x^prime i /r' =0} and~\eqref{RR'(x^prime i /r')=0}. Now~\eqref{RR' ^2 a^i_j+RR' a^i_j, i,j le n'}--\eqref{RR' ^2 a^i_j-RR' a^i_j, n' < i,j} follow by equating the corresponding coefficients in~\eqref{r' LL_RR' frac 1 r' LL_RR' theta^prime i}--\eqref{r' LL_RR' frac 1 r' LL_RR' theta^i}.
\end{proof}

Given $|t|\le1$, let $\xi_{ij}(t,x)=a^i_j(tx)$ for $x\in B'\times\{0\}$, and $\hat\xi_{ij}(t,x)=a^i_j(x',tx'')$ for $x=(x',x'')\in B'\times B''$. For functions $f=f(t,x)$, the derivatives with respect to $t$ are denoted by $f'$.

\begin{cor}\label{c:xi_ij'(t,x)}
	For $x\in B'\times\{0\}$, we have
		\begin{align}
			\xi_{ij}'(t,x)&=tx^{\prime l}x^{\prime k}\int_0^1u^2
			(\rR^{\underline i}_{\underline{k}lj}+F^i_{jkl})(tux)\,du
			+x^{\prime k}\int_0^1u\rT^{\underline{i}}_{kj}(tux)\,du
			\label{xi_ij'(t,x), B' times 0, i,j le n'}\\
		\intertext{if $i,j\le n'$,}
			\xi_{ij}'(t,x)&=tx^{\prime l}x^{\prime k}\int_0^1u^2
			F^i_{jkl}(tux)\,du
			+x^{\prime k}\int_0^1u\rT^{\underline{i}}_{kj}(tux)\,du
			\label{xi_ij'(t,x), B' times 0, j le n' < i}\\
		\intertext{if $j\le n'<i$,}
			\xi_{ij}'(t,x)&=\xi_{ij}'(0,x)+tx^{\prime l}x^{\prime k}\int_0^1(\rR^{\underline i}_{\underline{k}lj}
			+F^i_{jkl})(tux)\,du
			\label{xi_ij'(t,x), B' times 0, i le n' < j}\\
		\intertext{if $i\le n'<j$, and}
			\xi_{ij}'(t,x)&=\xi_{ij}'(0,x)+tx^{\prime l}x^{\prime k}\int_0^1F^i_{jkl}(tux)\,du
			\label{xi_ij'(t,x), B' times 0, n' < i,j}\\
		\intertext{if $n'<i,j$. For $x=(x',x'')\in B'\times B''$, we have}
			\hat\xi_{ij}'(t,x)&=\hat\xi_{ij}'(0,x)+tx^{\prime\prime l}x^{\prime\prime k}\int_0^1
			F^i_{jkl}(x',tux'')\,du
			\label{zeta_ij'(t,x), B' times B'', i,j le n'}\\
		\intertext{if $i,j\le n'$,}
			\hat\xi_{ij}'(t,x)&=\hat\xi_{ij}'(0,x)+tx^{\prime\prime l}x^{\prime\prime k}\int_0^1
			(\rR^{\underline i}_{\underline{k}lj}+F^i_{jkl})(x',tux'')\,du
			\label{zeta_ij'(t,x), B' times B'', j le n' < i}\\
		\intertext{if $j\le n'<i$,}
			\hat\xi_{ij}'(t,x)&=tx^{\prime\prime l}x^{\prime\prime k}\int_0^1u^2F^i_{jkl}(x',tux'')\,du
			\label{zeta_ij'(t,x), B' times B'', i le n' < j}\\
		\intertext{if $i\le n'<j$, and}
			\hat\xi_{ij}'(t,x)&=tx^{\prime\prime l}x^{\prime\prime k}\int_0^1u^2
			(\rR^{\underline i}_{\underline{k}lj}+F^i_{jkl})(x',tux'')\,du
			\label{zeta_ij'(t,x), B' times B'', n' < i,j}
		\end{align}
	if $n'<i,j$.
\end{cor}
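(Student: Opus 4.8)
The plan is to integrate the second-order radial differential equations of Proposition~\ref{p:equations relating a^i_j, curvature and torsion} along the rays $t\mapsto tx$ on $B'\times\{0\}$ and $t\mapsto(x',tx'')$ on $B'\times B''$, converting them into the stated one-variable integral formulas by means of~\eqref{tf'(t)=RR'h(tx)} and~\eqref{tf'(t)=RR''h(x',tx'')}.

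First I would record how the radial operators act after restriction to a ray. Fix $x\in B'\times\{0\}$ and set $\phi(t)=a^i_j(tx)$, so that $\xi_{ij}(t,x)=\phi(t)$ and $\xi_{ij}'(t,x)=\phi'(t)$. Applying~\eqref{tf'(t)=RR'h(tx)} to $a^i_j$ gives $(\RR'a^i_j)(tx)=t\phi'(t)$, and applying it once more to the function $\RR'a^i_j$ gives $({\RR'}^2a^i_j)(tx)=t\frac{d}{dt}\bigl(t\phi'(t)\bigr)=t\phi'(t)+t^2\phi''(t)$. Hence $\bigl(({\RR'}^2+\RR')a^i_j\bigr)(tx)=2t\phi'+t^2\phi''=\frac{d}{dt}\bigl(t^2\phi'\bigr)$, whereas $\bigl(({\RR'}^2-\RR')a^i_j\bigr)(tx)=t^2\phi''$. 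These two identities are exactly what separates the $+\RR'$ cases from the $-\RR'$ cases.

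Next I would substitute the right-hand sides of~\eqref{RR' ^2 a^i_j+RR' a^i_j, i,j le n'}--\eqref{RR' ^2 a^i_j-RR' a^i_j, n' < i,j}, evaluated at $tx$, recalling that each factor $x^{\prime l}$ becomes $tx^{\prime l}$ along the ray, so that the curvature term acquires a factor $t^2$ and the torsion term a factor $t$. In a $+\RR'$ case like~\eqref{RR' ^2 a^i_j+RR' a^i_j, i,j le n'} this reads $\frac{d}{dt}(t^2\phi')=t^2(\rR^{\underline i}_{\underline{k}lj}+F^i_{jkl})(tx)\,x^{\prime l}x^{\prime k}+t\,\rT^{\underline i}_{kj}(tx)\,x^{\prime k}$; integrating from $0$ to $t$ (the boundary term $t^2\phi'$ vanishing at $0$), dividing by $t^2$, and substituting $s=tu$ produces~\eqref{xi_ij'(t,x), B' times 0, i,j le n'}, the weights $u^2$ and $u$ arising from the substitution. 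In a $-\RR'$ case like~\eqref{RR' ^2 a^i_j-RR' a^i_j, i le n' < j} the same computation gives $t^2\phi''=t^2(\rR^{\underline i}_{\underline{k}lj}+F^i_{jkl})(tx)\,x^{\prime l}x^{\prime k}$, whence $\phi''$ equals a continuous function; integrating once from $0$ to $t$ now picks up the constant $\phi'(0)=\xi_{ij}'(0,x)$ and, after $s=tu$, yields~\eqref{xi_ij'(t,x), B' times 0, i le n' < j} with weight $u^0$. The two remaining formulas follow identically, the only bookkeeping being whether the torsion term survives, which happens precisely when $j\le n'$.

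The assertions for $\hat\xi_{ij}$ on $B'\times B''$ are proved in the same way, with $\RR'$, $x^{\prime l}$ and~\eqref{tf'(t)=RR'h(tx)} replaced by $\RR''$, $x^{\prime\prime l}$ and~\eqref{tf'(t)=RR''h(x',tx'')}, and using~\eqref{RR'' ^2 a^i_j-RR'' a^i_j, i,j le n'}--\eqref{RR'' ^2 a^i_j+RR'' a^i_j, n' < i,j}, in which the torsion term is already absent since $\rT^{\underline i}_{kj}=0$ for $k,j>n'$; note that the roles of the $+\RR''$ and $-\RR''$ cases are interchanged relative to the $B'\times\{0\}$ situation, which is why the weights $u^2$ and $u^0$ now attach to the opposite index ranges. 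The main (and essentially only) delicate point is the passage from $t^2\phi''=(\text{continuous})\cdot t^2$ to $\phi''=(\text{continuous})$ across $t=0$, together with keeping track of which cases genuinely carry the integration constant $\xi_{ij}'(0,x)$; both are dispatched by continuity of the integrands, so that no Rauch-type comparison input is required.
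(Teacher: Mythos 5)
Your proposal is correct and follows essentially the same route as the paper: both convert the radial identities of Proposition~\ref{p:equations relating a^i_j, curvature and torsion} into ODEs along rays via~\eqref{tf'(t)=RR'h(tx)} and~\eqref{tf'(t)=RR''h(x',tx'')}, recognize $({\RR'}^2+\RR')a^i_j(tx)$ as $\frac{d}{dt}\bigl(t^2\xi_{ij}'\bigr)$ and $({\RR'}^2-\RR')a^i_j(tx)$ as $t^2\xi_{ij}''$, and then integrate and substitute $\tau=tu$ to produce the weights $u^2$, $u$ and $u^0$ and the integration constants $\xi_{ij}'(0,x)$, $\hat\xi_{ij}'(0,x)$ in exactly the cases stated. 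Your bookkeeping of where the torsion term survives ($j\le n'$ on $B'\times\{0\}$, never on $B'\times B''$ since $\rT^{\underline i}_{kj}=0$ for $k,j>n'$) and of the interchange of the $+$ and $-$ cases in the hatted situation matches the paper precisely.
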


\begin{proof}
	Let us prove~\eqref{xi_ij'(t,x), B' times 0, j le n' < i} and~\eqref{xi_ij'(t,x), B' times 0, n' < i,j}. By using~\eqref{tf'(t)=RR'h(tx)} with $h=a^i_j$ and $h=\RR'a^i_j$, we get 
		\begin{equation}\label{t xi_ij'(t,x)}
			t\xi_{ij}'(t,x)=\RR'a^i_j(tx)\;,\quad t^2\xi_{ij}''(t,x)+t\xi_{ij}'(t,x)={\RR'}^2a^i_j(tx)\;.
		\end{equation}
	In the case $j\le n'<i$, we get
		\begin{multline*}
			(t^2\xi_{ij}')'(t,x)=t^2\xi_{ij}''(t,x)+2t\xi_{ij}'(t,x)=({\RR'}^2+\RR')a^i_j(tx)\\
			=t^2F_{ijkl}(tx)x^{\prime l}x^{\prime k}+t\rT^{\underline{i}}_{kj}(tx)x^{\prime k}
		\end{multline*}
	by~\eqref{t xi_ij'(t,x)} and~\eqref{RR' ^2 a^i_j+RR' a^i_j, j le n' < i}, obtaining~\eqref{xi_ij'(t,x), B' times 0, j le n' < i} by integration:
		\begin{align*}
			t^2\xi_{ij}'(t,x)&=x^{\prime l}x^{\prime k}\int_0^t\tau^2F_{ijkl}(\tau x)\,d\tau
			+x^{\prime k}\int_0^t\tau\rT^{\underline{i}}_{kj}(\tau x)\,d\tau\\
			&=t^3x^{\prime l}x^{\prime k}\int_0^1u^2F_{ijkl}(tux)\,du
			+t^2x^{\prime k}\int_0^1u\rT^{\underline{i}}_{kj}(tux)\,du\;.
		\end{align*}
	In the case $n'<i,j$, we get
		\[
			t^2\xi_{ij}''(t,x)=({\RR'}^2-\RR')a^i_j(tx)=t^2F_{ijkl}(tx)x^{\prime l}x^{\prime k}
		\]
	by~\eqref{t xi_ij'(t,x)} and~\eqref{RR' ^2 a^i_j-RR' a^i_j, n' < i,j}, obtaining~\eqref{xi_ij'(t,x), B' times 0, n' < i,j} by integration:
		\[
			\xi_{ij}'(t,x)-\xi_{ij}'(0,x)=x^{\prime l}x^{\prime k}\int_0^tF_{ijkl}(\tau x)\,d\tau
			=tx^{\prime l}x^{\prime k}\int_0^1F_{ijkl}(tux)\,du\;.
		\] 
	
	 The other equalities of the statement have similar proofs, using the corresponding equalities of Proposition~\ref{p:equations relating a^i_j, curvature and torsion}, and using~\eqref{tf'(t)=RR''h(x',tx'')} instead of~\eqref{tf'(t)=RR'h(tx)} to get~\eqref{zeta_ij'(t,x), B' times B'', i,j le n'}--\eqref{zeta_ij'(t,x), B' times B'', n' < i,j}.
\end{proof}

\begin{cor}\label{c:(partial_alpha xi_ij)'(t,x)}
	There are polynomials
		\begin{itemize}
			
			\item $P_{Iij}^{\alpha\beta}$ {\rm(}respectively, $\widehat{P}_{Iij}^{\alpha\beta}${\rm)} {\rm(}depending only on $I$, $i$, $j$, $\alpha$ and $\beta${\rm)} in $t$, $u$, $x$, and in the value of the functions $\rR^{\underline{*}}_{\underline{*}\underline{*}\underline{*}}$, $\rT^{\underline{*}}_{\underline{*}\underline{*}}$, $\rT^{\underline{*}}_{\underline{*}\underline{*};\underline{*}}$, $\rGamma^{\underline{*}}_{\underline{*}\underline{*}}$, $\rGamma^*_{**}$ and $a^*_*$ at $tux$ {\rm(}respectively, $(x',tux'')${\rm)}; and
			
			\item $Q_{Iij}$ {\rm(}respectively, $\widehat{Q}_{Iij}${\rm)} {\rm(}depending only on $I$, $i$ and $j${\rm)} in $t$, $x$, and in the value of the partial derivatives up to order $|I|-1$ of $\rR^{\underline{*}}_{\underline{*}\underline{*}\underline{*}}$, $\rT^{\underline{*}}_{\underline{*}\underline{*}}$, $\rT^{\underline{*}}_{\underline{*}\underline{*};\underline{*}}$, $\rGamma^{\underline{*}}_{\underline{*}\underline{*}}$, $\rGamma^*_{**}$ and $a^*_*$ at $tux$ {\rm(}respectively, $(x',tux'')${\rm)},
			
		\end{itemize}
	such that the following properties hold. For $x\in B'\times\{0\}$, we have
		\begin{align*}
			(\partial'_I\xi_{ij})'(t,x)&=\int_0^1P_{Iij}^{\alpha\beta}\,\partial'_I\xi_{\alpha\beta}(tu,x)\,du
			+\int_0^1Q_{Iij}\,du\\
		\intertext{if $j\le n'$, and}
			(\partial'_I\xi_{ij})'(t,x)&=(\partial'_I\xi_{ij})'(0,x)
			+\int_0^1P_{Iij}^{\alpha\beta}\,\partial'_I\xi_{\alpha\beta}(tu,x)\,du
			+\int_0^1Q_{Iij}\,du\\
		\intertext{if $j>n'$. For $x\in B'\times B''$, we have}
			(\partial_I\hat\xi_{ij})'(t,x)
			&=\partial'_I\hat\xi_{ij}'(0,x)+
			\int_0^1\widehat{P}_{Iij}^{\alpha\beta}\,\partial_I\hat\xi_{\alpha\beta}(tu,x)\,du
			+\int_0^1\widehat{Q}_{Iij}\,du\\
		\intertext{if $j\le n'$, and}
			(\partial_I\hat\xi_{ij})'(t,x)
			&=\int_0^1\widehat{P}_{Iij}^{\alpha\beta}\,\partial_I\hat\xi_{\alpha\beta}(tu,x)\,du
			+\int_0^1\widehat{Q}_{Iij}\,du
		\end{align*}
	if $j>n'$. 
\end{cor}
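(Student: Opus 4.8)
The plan is to establish all four families of identities simultaneously by induction on $|I|$, using Corollary~\ref{c:xi_ij'(t,x)} as the base case $|I|=0$ and obtaining each induction step by applying a single coordinate derivative to the formula already in hand.

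For the base case I would first put the right-hand sides of Corollary~\ref{c:xi_ij'(t,x)} into the required shape. Every summand there carries at least one factor $a^\alpha_\beta(tux)=\xi_{\alpha\beta}(tu,x)$: using \eqref{rT^underline i_kl}--\eqref{rGamma^i_jk} and \eqref{rT^underline i_;m}, the mixed and coordinate coefficients $\rR^{\underline i}_{\underline k lj}$, $\rT^{\underline i}_{kj}$, $(\rT^{\underline i})_{kj;l}$, $F^i_{jkl}$ are expressed through the fully underlined quantities $\rR^{\underline *}_{\underline*\underline*\underline*}$, $\rT^{\underline*}_{\underline*\underline*}$, $\rT^{\underline*}_{\underline*\underline*;\underline*}$, $\rGamma^{\underline*}_{\underline*\underline*}$, through $\rGamma^*_{**}$, and through the transition matrices $a^*_*$ and $b^*_*=(a^*_*)^{-1}$. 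Factoring one $a^\alpha_\beta$ out of each term exhibits the formulas as $\int_0^1 P^{\alpha\beta}_{0ij}\,\xi_{\alpha\beta}(tu,x)\,du$ with $Q_{0ij}=0$, the $P^{\alpha\beta}_{0ij}$ being polynomials in $t,u,x$ and in the values at $tux$ of the listed quantities; here $b^*_*$ is carried as one more such quantity, its derivatives reduced to those of $a^*_*$ by $\partial b=-b\,(\partial a)\,b$.

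For the induction step I would differentiate the formula for $(\partial'_J\xi_{ij})'$ (resp.\ $(\partial_J\hat\xi_{ij})'$) with $J=I-e_m$ by $\partial'_m$ (resp.\ $\partial_m$), interchanging $\partial_t$ with the spatial derivative and differentiating under the integral sign. The chain rule gives $\partial'_m[\partial'_J\xi_{\alpha\beta}(tu,x)]=\partial'_I\xi_{\alpha\beta}(tu,x)$, so the term $\int_0^1 P^{\alpha\beta}_{Jij}\,\partial'_m\partial'_J\xi_{\alpha\beta}(tu,x)\,du$ reproduces the leading integral with the same zeroth-order coefficient $P^{\alpha\beta}_{Iij}=P^{\alpha\beta}_{Jij}$. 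Every other contribution — the derivatives falling on $P^{\alpha\beta}_{Jij}$, on $Q_{Jij}$, and on the polynomial prefactors — is gathered into $Q_{Iij}$; and the $t=0$ boundary term of Corollary~\ref{c:xi_ij'(t,x)} passes through unchanged because $\partial'_m$ commutes with evaluation at $t=0$, which explains why that term is present in exactly the same cases (namely $j>n'$ for $\xi$ and $j\le n'$ for $\hat\xi$).

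The main obstacle is to verify that, after this differentiation, the derivative of $a^*_*$ of top order enters only through the designated $\partial'_I\xi_{\alpha\beta}$ term and never raises the order of $Q_{Iij}$. The one genuinely dangerous summand is the one in which the whole of $\partial'_I$ falls on the Christoffel factor $\rGamma^p_{kl}$ concealed inside $F^i_{jkl}$, for by the second relation of \eqref{rGamma^i_jk} this factor contains the first derivative $\partial_l a^\alpha_k$ and would naively contribute a derivative of $a$ of order $|I|+1$. The point is that in $F^i_{jkl}$ the index $l$ is contracted against $x^{\prime l}$ (resp.\ $x^{\prime\prime l}$); the radial identities \eqref{tf'(t)=RR'h(tx)} and \eqref{tf'(t)=RR''h(x',tx'')}, together with the commutation rule $[\RR',\partial'_I]=-|I|'\partial'_I$, turn the contracted radial derivative $x^{\prime l}\partial_l(\partial'_J a)$ back into a quantity of order $|I|$ in $a$, i.e.\ into $\partial'_I\xi$ plus lower-order remainders, so that it joins the $P\,\partial'_I\xi$ term. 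The top-order coordinate derivatives of the Christoffel symbols produced along the way are lowered by one order by means of the integral formulas of Corollaries~\ref{c:equations relating the derivatives of Cristoffel symbols and curvature, underlined} and~\ref{c:equations relating the derivatives of Cristoffel symbols and curvature, non-underlined}, and the remaining curvature, torsion and Christoffel data are those already controlled by bounded geometry and by the earlier corollaries. Carrying out this reduction carefully, and checking that nothing of order $|I|$ survives in $Q_{Iij}$ outside the single $\partial'_I\xi$ term, is the delicate heart of the argument and is the adapted-coordinate analogue of the corresponding step in Schick's treatment.
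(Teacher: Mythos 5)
Your skeleton is essentially the paper's: rewrite the integrands of Corollary~\ref{c:xi_ij'(t,x)} via \eqref{rT^underline i_kl}, \eqref{rR^i_jkl} and \eqref{rT^underline i_;m} as polynomials in $\rR^{\underline{*}}_{\underline{*}\underline{*}\underline{*}}$, $\rT^{\underline{*}}_{\underline{*}\underline{*}}$, $\rT^{\underline{*}}_{\underline{*}\underline{*};\underline{*}}$, $\rGamma^{\underline{*}}_{\underline{*}*}$, $\rGamma^*_{**}$ and values of $a^*_*$, then differentiate, letting the Leibniz terms in which all of $\partial'_I$ lands on a single $a$-factor constitute the $P\,\partial'_I\xi$ integral, everything else going into $Q$. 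The paper does this in one stroke, applying $\partial'_I$ directly, with no induction on $|I|$; your induction wrapper is redundant though workable in principle. Two slips in it: the claim $P^{\alpha\beta}_{Iij}=P^{\alpha\beta}_{Jij}$ fails at the step $|I|=1$, where the new derivative may hit one of the \emph{other} $a$-values inside $P^{\alpha\beta}_{0ij}$, producing a term $\partial'_I\xi_{\alpha'\beta'}(tu,x)$ that is an order-$|I|$ derivative of $a$ and so must be moved into the leading coefficient rather than into $Q_{Iij}$ (for $|I|\ge2$ such terms are order-one derivatives of $a$ and may legitimately sit in $Q$); and your auxiliary primitive $b^*_*$ is neither in the statement's list nor needed, since the conversions actually used (e.g.\ $\rT^{\underline{i}}_{kj}=\rT^{\underline{i}}_{\underline{\alpha}\underline{\beta}}a^\alpha_ka^\beta_j$ and the displayed expansion of $F^i_{jkl}$ in the paper's proof) involve only $a^*_*$.

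The genuine gap is your ``delicate heart'' paragraph. The dangerous summand you describe does not exist in the correct setup, and your rescue of it would not prove the stated formula. The design of the corollary --- made explicit in the Remark immediately following it --- is that $F^i_{jkl}$ is expressed as a polynomial in $\rGamma^*_{**}$, $\rGamma^{\underline{*}}_{\underline{*}*}$, the $\rT^{\underline{*}}$-data and \emph{values} of $a^*_*$, with no derivative of $a^*_*$ present, precisely so that top-order $a$-derivatives enter only through the designated $\partial'_I\xi_{\alpha\beta}$ term; the derivatives of $\rGamma^*_{**}$ that consequently land in $Q_{Iij}$ are bounded later by Corollaries~\ref{c:equations relating Cristoffel symbols and curvature, non-underlined} and~\ref{c:equations relating the derivatives of Cristoffel symbols and curvature, non-underlined}, not eliminated by expanding $\rGamma$ through \eqref{rGamma^i_jk}. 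By re-expanding $\rGamma^m_{kl}$ into $\partial_la^\alpha_k$ you contradict your own base case (where $\rGamma^*_{**}$ was kept primitive) and manufacture the order-$(|I|+1)$ problem yourself; and the proposed repair fails on its own terms: $x^{\prime l}\partial'_l\partial'_Ja$ is a combination of the derivatives $\partial'_{J+e_l}a$ over all $l$, i.e.\ of $\partial'_{I'}\xi_{\alpha\beta}$ for multi-indices $I'\ne I$ of order $|I|$, which cannot be absorbed into the single term $P^{\alpha\beta}_{Iij}\,\partial'_I\xi_{\alpha\beta}(tu,x)$ of the statement, while the commutator route via $[\RR',\partial'_I]=-|I|'\,\partial'_I$ trades the radial derivative for the $t$-derivative $(\partial'_I\xi_{\alpha\beta})'$ of the unknown at intermediate times, which is likewise not among the admissible terms. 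Deleting that paragraph and keeping $\rGamma^*_{**}$ primitive throughout --- as the statement's list of arguments for $P$ and $Q$ already dictates --- reduces your argument to the paper's proof.
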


\begin{proof}
	This follows by taking partial derivatives of the equalities of Corollary~\ref{c:xi_ij'(t,x)}, using the expressions of $\rR^{\underline i}_{\underline{k}lj}$ and $\rT^{\underline{i}}_{kj}$ given in~\eqref{rR^i_jkl} and~\eqref{rT^underline i_kl}, and since
		\begin{multline*}
			F^i_{jkl}=\rGamma^m_{kl}\rT^{\underline{i}}_{\underline{\alpha}\underline{\delta}}
			a^\alpha_ma^\beta_j
			+\rGamma^m_{kj}\rT^{\underline{i}}_{\underline{\alpha}\underline{\delta}}
			a^\alpha_ma^\beta_l
			+(\rT^{\underline{i}}_{\underline{\alpha}\underline{\beta};\underline{\gamma}}
			a^\gamma_l
			-\rT^{\underline{m}}_{\underline{\alpha}\underline{\beta}}
			\rGamma^{\underline{i}}_{\underline{m}l})
			a^\alpha_ka^\beta_j\\
			\text{}-(\rT^{\underline{i}}_{\underline{\alpha}\underline{\beta};\underline{\gamma}}a^\gamma_j
			-\rT^{\underline{m}}_{\underline{\alpha}\underline{\beta}}
			\rGamma^{\underline{i}}_{\underline{m}j})a^\alpha_ka^\beta_l
			+\rT^{\underline{\gamma}}_{\underline{\alpha}\underline{\beta}}
			\rT^{\underline{i}}_{\underline{\delta}\underline{\gamma}}a^\alpha_la^\beta_ja^\delta_k
		\end{multline*}
	by~\eqref{rT^underline i_;m} and~\eqref{rT^underline i_kl}.
\end{proof}

\begin{rem}
	From~\eqref{rT^underline i_kl} and~\eqref{rGamma^i_jk}, we can get an expression of $F^i_{jkl}$ involving the Christoffel symbols $\rGamma^{\underline{*}}_{\underline{*}*}$ (with two underlined coefficients). But this expression also involves derivatives of $a^*_*$. For this reason, the functions $\rGamma^*_{**}$ are used in Corollary~\ref{c:(partial_alpha xi_ij)'(t,x)}. Thus Corollaries~\ref{c:equations relating Cristoffel symbols and curvature, non-underlined} and~\ref{c:equations relating the derivatives of Cristoffel symbols and curvature, non-underlined} will be needed to find bounds of the partial derivatives of $\rGamma^*_{**}$, so that Corollary~\ref{c:(partial_alpha xi_ij)'(t,x)} can be applied to find bounds of the partial derivatives of $a_*^*$.
\end{rem}

Let us use the multi-index notation also for covariant derivatives; for example, for any multi-index $I=(i_1,\dots,i_n)$, let
	\begin{align*}
		\rnabla_{\partial_1}^{i_1}\cdots\rnabla_{\partial_n}^{i_n}\rR
		&=\frac{1}{2}\rR^i_{jkl;I}\,dx^k\wedge dx^l\otimes\partial_i\otimes dx^j\\
		&=\frac{1}{2}\rR^{\underline{i}}_{\underline{j}kl;I}\,dx^k\wedge dx^l\otimes s_i\otimes\theta^j\;,\\
		\rnabla_{\partial_1}^{i_1}\cdots\rnabla_{\partial_n}^{i_n}\rT
		&=\frac{1}{2}\rT^i_{kl;I}\,dx^k\wedge dx^l\otimes\partial_i\\
		&=\frac{1}{2}\rT^{\underline{i}}_{kl;I}\,dx^k\wedge dx^l\otimes s_i\;,
	\end{align*}
with 
	\begin{alignat*}{2}
		\rR^i_{jkl;I}&=-\rR^i_{jlk;I}\;,&\quad
		\rR^{\underline{i}}_{\underline{j}kl;I}&=-\rR^{\underline{i}}_{\underline{j}lk;I}\;,\\
		\rT^i_{kl;I}&=-\rT^i_{lk;I}\;,&\quad\rT^{\underline{i}}_{kl;I}&=-\rT^{\underline{i}}_{lk;I}\;.
	\end{alignat*}
Obviously,
	\begin{equation}\label{rR^i_jkl;I}
		\rR^i_{jkl;I}
			=\partial_I\rR^i_{jkl}+P_{Iijkl}\;,\quad
			\rT^i_{kl;I}=\partial_I\rT^i_{kl}+Q_{Iikl}\;,
	\end{equation}
where $P_{Ijkl}$ (respectively, $Q_{Ikl}$) is a polynomial in the partial derivatives up to order $|I|-1$ of the functions $\rR^*_{***}$ (respectively, $\rT^*_{**}$) and $\rGamma^*_{**}$, which depends only on $I$, $i$, $j$, $k$ and $l$ {\rm(}respectively, $I$, $i$, $k$ and $l${\rm)}.

\begin{lem}\label{l:rR^underline i_underline j kl;I}
	We have
		\[
			\rR^{\underline{i}}_{\underline{j}kl;I}
			=\partial_I\rR^{\underline{i}}_{\underline{j}\underline{k}\underline{l}}+P_{Iijkl}\;,\quad
			\rT^{\underline{i}}_{kl;I}=\partial_I\rT^{\underline{i}}_{\underline{k}\underline{l}}+Q_{Iikl}\;,
		\]
	where $P_{Ijkl}$ {\rm(}respectively, $Q_{Ikl}${\rm)} is a polynomial in the partial derivatives up to order $|I|-1$ of the functions $\rR^{\underline{*}}_{\underline{*}\underline{*}\underline{*}}$ {\rm(}respectively, $\rT^{\underline{*}}_{\underline{*}\underline{*}}${\rm)}, $\rGamma^{\underline{*}}_{\underline{*}*}$ and $a^*_*$, which depends only on $I$, $i$, $j$, $k$ and $l$ {\rm(}respectively, $I$, $i$, $k$ and $l${\rm)}.
\end{lem}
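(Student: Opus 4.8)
The plan is to deduce the lemma from the fully non-underlined identities~\eqref{rR^i_jkl;I}, which are already in hand, by converting every index between the coordinate frame $\partial_1,\dots,\partial_n$ and the orthonormal frame $s_1,\dots,s_n$ using the transition functions $a^i_j$, $b^i_j=(a^i_j)^{-1}$ and the relations~\eqref{rT^underline i_kl}--\eqref{rGamma^i_jk}. The two assertions have the same shape, so I would carry out the computation only for $\rT$; the curvature case is the identical argument with one extra underlined lower index carried along. The whole statement I would organize as an induction on $|I|$, the substance being the bookkeeping below rather than any genuinely new geometric input.

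For the inductive step I would first move the upper index into the orthonormal frame by the first identity of~\eqref{rT^underline i_kl}, writing $\rT^{\underline i}_{kl;I}=a^\alpha_i\,\rT^\alpha_{kl;I}$, and then insert the non-underlined expansion~\eqref{rR^i_jkl;I}, i.e. $\rT^\alpha_{kl;I}=\partial_I\rT^\alpha_{kl}+Q^\flat_{I\alpha kl}$, where $Q^\flat$ is a polynomial in the partial derivatives up to order $|I|-1$ of the coordinate components $\rT^*_{**}$ and the coordinate Christoffel symbols $\rGamma^*_{**}$. Next I would substitute $\rT^\alpha_{kl}=b^\gamma_\alpha\,\rT^{\underline\gamma}_{\underline p\underline q}\,a^p_k a^q_l$ (the zeroth order cases of~\eqref{rT^underline i_kl} and~\eqref{rT^underline i_kl;m_1 dots m_r}) and expand by the Leibniz rule. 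The single term in which all $|I|$ derivatives fall on the orthonormal component reproduces the asserted leading term $\partial_I\rT^{\underline i}_{\underline k\underline l}$, the contraction $a^\alpha_i b^\gamma_\alpha=\delta^\gamma_i$ together with the transition factors $a^p_k a^q_l$ effecting the passage to the fully orthonormal component; every other term carries at least one derivative on a transition function, and hence involves at most $|I|-1$ derivatives of $\rT^{\underline*}_{\underline*\underline*}$ together with derivatives of $a$ and $b$.

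It then remains to recast the correction $Q^\flat$ in the ingredients allowed by the statement, the only offending object being the coordinate Christoffel symbols $\rGamma^*_{**}$. These I would eliminate through~\eqref{rGamma^i_jk}: the relation $\rGamma^{\underline i}_{jk}=\partial_k a^i_j+\rGamma^{\underline i}_{\underline\alpha k}a^\alpha_j$, together with $\rGamma^i_{jk}=b^i_\alpha\rGamma^{\underline\alpha}_{jk}$ and $\rGamma^{\underline i}_{\underline\alpha k}=\rGamma^{\underline i}_{\underline\alpha\underline\gamma}a^\gamma_k$, rewrites every $\rGamma^*_{**}$ as a polynomial in $a$, its first derivatives, $b$, and $\rGamma^{\underline*}_{\underline*\,*}$; differentiating up to $|I|-1$ times keeps all curvature, torsion and $\rGamma^{\underline*}_{\underline*\,*}$ factors at order $\le|I|-1$ and produces only further derivatives of the transition functions. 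Collecting the terms yields the required polynomials $P_{Iijkl}$ and $Q_{Iikl}$. I expect the main difficulty to be purely organizational, namely keeping the order count honest, so that exactly one term survives at top order while all remaining corrections genuinely drop to order $\le|I|-1$ in the curvature and torsion, and checking that the inverse matrix $b=a^{-1}$ and the derivatives of $a$ produced by the Christoffel conversion are admissible constituents of $P$ and $Q$; the relation $\rGamma^{\underline i}_{jk}$ of~\eqref{rGamma^i_jk} and the normalization~\eqref{rGamma^underline i_underline j k(0)=0} are exactly what make this accounting close.
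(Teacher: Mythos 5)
Your strategy --- bootstrapping the underlined identity out of the non-underlined identity \eqref{rR^i_jkl;I} by Leibniz-expanding the frame conversions under $\partial_I$ --- is not the paper's route, and it fails precisely at the step you present as the heart of the computation. The paper proves the lemma by Schick's argument (\cite[Lemma~2.18]{Schick2001}): an induction on $|I|$ run directly in the orthonormal frame, where, since $\rnabla_{\partial_m}s_j=\rGamma^{\underline{j}}_{\underline{j}m}s_i$ reads $\rnabla_{\partial_m}s_j=\rGamma^{\underline{i}}_{\underline{j}m}s_i$ and $\rnabla_{\partial_m}\theta^j=-\rGamma^{\underline{j}}_{\underline{i}m}\theta^i$, each new covariant derivative contributes
\begin{equation*}
	\rT^{\underline{i}}_{\underline{k}\underline{l};Im}
	=\partial_m\rT^{\underline{i}}_{\underline{k}\underline{l};I}
	+\rGamma^{\underline{i}}_{\underline{\alpha}m}\rT^{\underline{\alpha}}_{\underline{k}\underline{l};I}
	-\rGamma^{\underline{\alpha}}_{\underline{k}m}\rT^{\underline{i}}_{\underline{\alpha}\underline{l};I}
	-\rGamma^{\underline{\alpha}}_{\underline{l}m}\rT^{\underline{i}}_{\underline{k}\underline{\alpha};I}\;,
\end{equation*}
so only the admissible symbols $\rGamma^{\underline{*}}_{\underline{*}*}$ ever appear, the admissible form of the correction is preserved at every stage, and the transition functions $a^*_*$ enter only through the \emph{undifferentiated} zeroth-order conversions \eqref{rT^underline i_kl} and \eqref{rR^i_jkl}. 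In your expansion, by contrast, the claim that the term with all $|I|$ derivatives on $\rT^{\underline{\gamma}}_{\underline{p}\underline{q}}$ ``reproduces the asserted leading term $\partial_I\rT^{\underline{i}}_{\underline{k}\underline{l}}$'' is false: $\partial_I$ does not commute with multiplication by the transition matrices, so that term is $\partial_I\rT^{\underline{i}}_{\underline{p}\underline{q}}\,a^p_ka^q_l$, and the discrepancy $\partial_I\rT^{\underline{i}}_{\underline{p}\underline{q}}\bigl(a^p_ka^q_l-\delta^p_k\delta^q_l\bigr)$ contains derivatives of order $|I|$ of $\rT^{\underline{*}}_{\underline{*}\underline{*}}$, which cannot be absorbed into $Q_{Iikl}$, whose budget is order $\le|I|-1$.

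The same order-count failure infects your corrections. The Leibniz terms in which all derivatives fall on $b^\alpha_\gamma$ or on $a^p_k$ carry derivatives of order $|I|$ of the transition functions (allowed: $\le|I|-1$), and $b=a^{-1}$ is not a polynomial in $a$, hence is not an admissible constituent of $P$ or $Q$ at all. Your proposed repair does not close this: \eqref{rGamma^i_jk} trades one derivative of $a$ for the non-underlined symbols $\rGamma^*_{**}$, which are equally absent from the lemma's list, and \eqref{rGamma^underline i_underline j k(0)=0} is a normalization at the single point $0$ --- it cannot rescue an identity that must hold on all of $B'\times B''$. Nor is the accounting merely cosmetic: in the proof of Theorem~\ref{t:bounded geometry} the lemma is invoked with $|I|=r+1$ at a stage of the induction where only the derivatives of $a^*_*$ up to order $r$ are controlled (the order-$(r+1)$ bounds on $a^*_*$ are obtained only afterwards, via Corollary~\ref{c:(partial_alpha xi_ij)'(t,x)}), so any occurrence of order-$|I|$ derivatives of $a^*_*$ in $P$, $Q$ would render the main argument circular. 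Structurally, converting the coordinate-frame identity \eqref{rR^i_jkl;I} must differentiate the frame change $|I|$ times, so your route cannot meet the $|I|-1$ budget; the induction has to be run in the orthonormal frame from the start, as in the paper, with the $a^*_*$ appearing only as undifferentiated multipliers (note that, carried out this way, the mixed components on the left acquire the harmless dressed leading term $\partial_I\rT^{\underline{i}}_{\underline{p}\underline{q}}\,a^p_ka^q_l$, which is stripped using the uniformly bounded matrices $a$, $b$ in the application).
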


\begin{proof}
	The first equality is a version of \cite[Lemma~2.18]{Schick2001}, which follows with the same arguments, and the second equality has a similar proof.
\end{proof}

\section{Bounded geometry}\label{s:bounded geometry}

Recall the following terminology for a Riemannian manifold $N$. The {\em injectivity radius\/} of $N$ at a point $p$ is the supremum of all $r>0$ such that the exponential map is defined and is a diffeomorphism on the open $r$-ball of center $0$ in $T_pN$. The {\em injectivity radius\/} of $N$ is the infimum of the injectivity radii at all of its points.

The {\em leafwise injectivity radius\/} of $\FF$ is the injectivity radius of the disjoint union of the leaves.  On the other hand, given a defining cocycle $\{U_a,\pi_a,h_{ab}\}$ of $\FF$ and a covering $\{Q_a\}$ of $M$ consisting of compact sets $Q_a\subset U_a$, the {\em transverse injectivity radius\/} of $\FF$, with respect to $\{U_a,\pi_a,h_{ab}\}$ and $\{Q_a\}$, is the infimum of the injectivity radii of $\Sigma=\bigsqcup_a\Sigma_a$ at the points of $\bigsqcup_a\pi_a(Q_a)$, with respect to the induced metric. The condition of having a positive transverse injectivity radius is independent of $\{U_a,\pi_a,h_{ab}\}$ and $\{Q_a\}$ satisfying the stated conditions.

\begin{defn}\label{d:bounded geometry}
	It is said that $\FF$ is of {\em bounded geometry\/} if it has positive leafwise and transverse injectivity radii, and the functions $|\nabla^mR|$, $|\nabla^m\sT|$ and $|\nabla^m\sA|$ are uniformly bounded on $M$ for every $m\in\N$.
\end{defn}

\begin{rem}
	If $\FF$ is of bounded geometry, then the disjoint union of its leaves is of bounded geometry by~\eqref{rR(V,W)} and~\eqref{rnabla_V W}.
\end{rem}

\begin{exs}
	\begin{itemize}
		
		\item[(i)] Trivially, Riemannian foliations on compact manifolds are of bounded geometry for any bundle-like metric. 
		
		\item[(ii)] Let $\FF$ be a Riemannian foliation of bounded geometry on a connected manifold $M$ with a bundle-like metric $g$. Then the lift of $\FF$ to any connected covering $\widetilde M$ of $M$, equipped with the lift $\tilde g$ of $g$, is of bounded geometry.
		
		\item[(iii)] Let $H$ be a connected Lie group equipped with a left-invariant Riemannian metric $\tilde g$, and let $L\subset H$ be a connected Lie subgroup. Then the right translates of $L$ define a Riemannian foliation $\widetilde\FF$ on $H$, which is of bounded geometry with the bundle-like metric $\tilde g$. If moreover $L\vartriangleleft H$ and $\Gamma\subset H$ is a discrete subgroup, then $\widetilde\FF$ and $\tilde g$ descend to $\Gamma\backslash H$, defining a Riemannian foliation $\FF$ of bounded geometry with a bundle-like metric $g$.
		
		\item[(iv)] Let $\FF$ be a codimension one foliation almost without holonomy on a compact manifold $M$; thus the leaves with non-trivial holonomy group are compact. Suppose that $\FF$ has a finite number of leaves with non-trivial holonomy groups, whose union is a closed submanifold $C\subset M$. Then the restriction of $\FF$ to $M\setminus C$ is Riemannian and has bounded geometry for some bundle-like metric \cite{AlvKordyLeichtnam:atffff}.
		
		\item[(v)] Changes of a Riemannian foliation or bundle-like metric in a compact region preserves the bounded geometry condition.
		
	\end{itemize}
\end{exs}

\begin{thm}\label{t:bounded geometry}
  With the above notation, $\FF$ is of bounded geometry if and only if there is a normal foliation chart $x_p=(x_p^1,\dots,x_p^n):U_p\to B'\times B''$ at each $p\in M$ such that the balls $B'$ and $B''$ are independent of $p$, and the corresponding metric coefficients $g^p_{ij}$ and $g_p^{ij}$, as family of smooth functions on $B'\times B''$ parametrized by $i$, $j$ and $p$, lie in a bounded subset of the Fr\'echet space $C^\infty(B'\times B'')$.
\end{thm}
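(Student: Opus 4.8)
The plan is to reduce the substantive direction to uniform control of the frame-change functions $a^i_j$ and $b^i_j$ of Section~\ref{s: coefficients}. Since $g_{ij}=a^\alpha_ia^\alpha_j$, $g^{ij}=b^i_\alpha b^j_\alpha$ by~\eqref{g_ij} and $(b^i_j)=(a^i_j)^{-1}$, it is enough to bound the $a^i_j$ in $C^\infty(B'\times B'')$ uniformly in $p$ and to bound $|\det(a^i_j)|$ away from $0$ uniformly: the first bounds the $g_{ij}$, and the two together bound the $b^i_j$, hence the $g^{ij}$, placing $\{g^p_{ij},g_p^{ij}\}$ in a bounded subset of $C^\infty(B'\times B'')$.

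The implication from the chart condition to bounded geometry is the classical argument. Uniform two-sided bounds on the $g_{ij}$ and $g^{ij}$ turn the Levi-Civita Christoffel symbols, the curvature $R$, and every iterated covariant derivative $\nabla^mR$ into universally bounded polynomial expressions in the $g_{ij}$, $g^{ij}$ and their coordinate derivatives, so all $|\nabla^mR|$ are uniformly bounded; the projections $\bV$ and $\bH$ are written through the $g_{ij}$, $g^{ij}$ in~\eqref{sV, sH}, whence $\sT$, $\sA$ and all $|\nabla^m\sT|$, $|\nabla^m\sA|$ are bounded as well. Taking the chart domains as a defining cocycle, the uniform size of $B''$ with controlled leafwise metric yields a positive leafwise injectivity radius, and the uniform size of $B'$ with its induced transverse metric a positive transverse injectivity radius.

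For the converse, assume $\FF$ has bounded geometry. First I would pass to the adapted data: using~\eqref{rT(V,W)}--\eqref{rT(X,V)} and~\eqref{rR(V,W)}--\eqref{rR(X,V)} to express $\rT$ and $\rR-R$ through $\sT$, $\sA$ and their covariant derivatives, and converting $\nabla$-derivatives into $\rnabla$-derivatives by~\eqref{nabla-rnabla}, one obtains uniform bounds on all $|\rnabla^m\rR|$ and $|\rnabla^m\rT|$, hence on the fully underlined coefficients $\rR^{\underline i}_{\underline j\underline k\underline l;J}$ and $\rT^{\underline i}_{\underline k\underline l;J}$ in the orthonormal frame. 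Positivity of the two injectivity radii, with Proposition~\ref{p:kappa_*=id}, Corollary~\ref{c:kappa is a chart} and Proposition~\ref{p:kappa is foliated}, lets me fix $B'$ and $B''$ so that every $\kappa_p$ is a diffeomorphism onto a normal foliation chart $U_p$. The order-zero bound on the $a^i_j$ then comes from geometry: the fields $\partial'_i$ restrict to adapted Jacobi fields along leafwise geodesics by Proposition~\ref{partial'_i(x',tx'') is Jacobi}, so Proposition~\ref{p:|X|^2+|Y|^2} bounds each $|\partial'_i|$, and thus the transverse columns of $(a^k_i)$, uniformly; the leafwise columns are controlled within each plaque by the bounded geometry of the leaves, which follows from that of $\FF$ via~\eqref{rR(V,W)} and~\eqref{rnabla_V W}. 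This a priori bound is exactly what keeps the quadratically nonlinear radial equations for $a$ from blowing up across $B'\times B''$.

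The higher-order bounds are then an induction on $m=|I|$ driven by Corollary~\ref{c:(partial_alpha xi_ij)'(t,x)}, where each $\partial_I a^i_j$ solves a radial linear ODE whose principal (Grönwall) coefficient depends only on order-zero data, already bounded, and whose inhomogeneous term depends only on derivatives of order $\le m-1$ of $a$, of $\rR$, $\rT$ and its first covariant derivative, and of the Christoffel symbols. The needed order-$(\le m-1)$ curvature and torsion coefficients are produced by Lemma~\ref{l:rR^underline i_underline j kl;I} (a covariant derivative, bounded by the previous steps, plus a lower-order polynomial in $\rGamma$ and $a$), and the Christoffel symbols and their derivatives by the radial curvature integrals of Corollaries~\ref{c:equations relating the derivatives of Cristoffel symbols and curvature, underlined} and~\ref{c:equations relating the derivatives of Cristoffel symbols and curvature, non-underlined}; a Grönwall estimate closes the induction and gives the uniform $C^\infty$-bounds on the $a^i_j$. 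The step I expect to be the genuine obstacle is the uniform \emph{lower} bound on $|\det(a^i_j)|$ over all of $B'\times B''$, i.e.\ keeping the chart non-degenerate away from its centre, which is the point handled by Rauch comparison in the classical theory. Here I would combine the transverse non-degeneracy on $B'\times\{0\}$ (from the bounds on $\rR$), the constancy of the horizontal lengths $|\bH\partial'_i|$ along leafwise geodesics from Corollary~\ref{c:|sH X| is constant}, and the leafwise non-degeneracy coming from the bounded geometry of the leaves, to bound the Jacobian away from zero; this controls the $b^i_j$ and $g^{ij}$ and completes the argument.
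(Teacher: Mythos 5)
Your proposal is correct and follows essentially the same route as the paper: reduce to uniform $C^\infty$-bounds on the frame coefficients $a^i_j$ (with the inverse handled by an order-zero non-degeneracy bound plus Schick's lemma on inverse matrices), anchor the order-zero bounds in the adapted Jacobi field estimates of Propositions~\ref{partial'_i(x',tx'') is Jacobi} and~\ref{p:|X|^2+|Y|^2} together with Corollary~\ref{c:|sH X| is constant} (which is exactly the paper's substitute for Rauch comparison), and close the higher-order induction by Gr\"onwall via Corollary~\ref{c:(partial_alpha xi_ij)'(t,x)}, Lemma~\ref{l:rR^underline i_underline j kl;I} and the Christoffel-symbol corollaries. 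The only cosmetic difference is that the paper explicitly imports the classical Eichhorn--Schick bounds for the transverse coefficients $\partial'_Ig_{ij}$ on $B'\times\{0\}$ as the base data, where you instead invoke the curvature bounds directly, which amounts to re-running the same classical argument through the $\xi$-equations on $B'\times\{0\}$.
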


\begin{proof}
	To prove the ``if'' part, take normal foliation coordinates $x_p:U_p\to B'\times B''$ at each $p\in M$ satisfying the conditions of the statement. Then the injectivity radius of the leaf $L_p$ at $p$ is obviously greater or equal than the Euclidean radius of $B''$ since each restriction $x_p'':U_p\cap L_p\to B''$ is a system of normal coordinates of $L_p$ at $p$. Thus $\FF$ has a positive leafwise injectivity radius.  
	
	After shrinking the open covering $\{U_p\}$ if necessary, we can assume that there is a defining cocycle of the form $\{U_p,x_p',h_{pq}\}$, with $p,q\in M$ and $x_p':U_p\to B'$. Then, with the metric induced by each $\pi_p'$, the injectivity radius of $B'$ at the point $\pi_p(p)=0$ equals the Euclidean radius of $B'$. So the condition to have positive transverse injectivity radius is satisfied with $\{U_p,x_p',h_{pq}\}$ and the covering of $M$ by the sets $Q_p=\{p\}$. On the other hand, by using~\eqref{sV, sH} and the expression of the Christoffel symbols of the Levi-Civita connection in terms of the metric coefficients \cite[Chapter~2, page~56]{doCarmo1992}, it follows that the coefficients of $R$, $\sT$ and $\sA$ with respect to the foliation charts $x_p$ have polynomial expressions in terms of the coefficients $g^p_{ij}$ and $g_p^{ij}$ and their partial derivatives, obtaining easily from the hypothesis that $|\nabla^mR|$, $|\nabla^m\sT|$ and $|\nabla^m\sA|$ are uniformly bounded on $M$ for every $m\in\N$.
	
	Now assume that $\FF$ is of bounded geometry to prove the ``only if'' part. Since $\FF$ has positive leafwise and transverse injectivity radii, it easily follows that there are normal foliation coordinates $x_p:U_p\to B'\times B''$ at each $p\in M$ with $B'$ and $B''$ independent of $p$. We will use the following terminology: for functions of the metric coefficients and their partial derivatives with respect to these charts, saying that they are uniformly bounded on $B\times B'$ will mean that they are uniformly bounded as functions of $x\in B'\times B''$ and $p\in M$. This kind of terminology will be also used on $B'\times\{0\}$, as well as for the condition of being uniformly bounded away from zero. Let us drop the index $p$ from the notation, like in Section~\ref{s: coefficients}.  
	
	By~\eqref{g_ij}, it is enough to prove that the functions $\partial_Ia^i_j$ and $\partial_Ib^i_j$ are uniformly bounded on $B'\times B''$, independently of $p$, for each multi-index $I$. 
	
	According to \cite{Eichhorn1991,Schick1996,Schick2001}, the functions $\partial'_Ig_{ij}$ and $\partial'_Ig^{ij}$ are uniformly bounded on $B'\times\{0\}$, independently of $p$, for all $I$ and $i,j\le n'$. Moreover $g_{ij}=g^{ij}=\delta^i_j$ on $B'\times\{0\}$ if $\max\{i,j\}>n'$. Then, by  Propositions~\ref{partial'_i(x',tx'') is Jacobi} and~\ref{p:|X|^2+|Y|^2}, and Corollary~\ref{c:|sH X| is constant}, the functions $g_{ii}$ are uniformly bounded and uniformly bounded away from zero, independently of $p$, on $B'\times B''$. Therefore all functions $g_{ij}$ and $g^{ij}$ are uniformly bounded, independently of $p$, on $B'\times B''$. This also means that the functions $a^i_j$ and $b^i_j$ are uniformly bounded, independently of $p$. Then, according to \cite[Lemma~2.17]{Schick2001}, it is indeed enough to prove that the functions $\partial'_Ia^i_j$ are uniformly bounded on $B'\times B''$ for each $I$, independently of $p$. To establish this property, we show by induction on $r\in\N$ that the partial derivatives up to order $r$ of $a^*_*$, $\rR^{\underline{*}}_{\underline{*}\underline{*}\underline{*}}$, $\rT^{\underline{*}}_{\underline{*}\underline{*}}$, $\rR^*_{***}$ and $\rT^*_{**}$, as well as the partial derivatives up to order $r-1$ of $\rGamma^{\underline{*}}_{\underline{*}*}$ and $\rGamma^*_{**}$, are uniformly bounded on $B'\times B''$, independently of $p$.

In the case $r=0$, we have already indicated that $a^*_*$ is uniformly bounded, independently of $p$. Also $\rT^{\underline{*}}_{\underline{*}\underline{*}}$ and $\rR^{\underline{*}}_{\underline{*}\underline{*}\underline{*}}$ are uniformly bounded, independently of $p$, because the frame $s_1,\dots,s_n$ is orthonormal, and $|\rT|$ and $|\rR|$ are uniformly bounded on $M$ by~\eqref{rT(V,W)}--\eqref{rR(X,V)}. Hence $\rR^*_{***}$ and $\rT^*_{**}$ are uniformly bounded, independently of $p$, by~\eqref{rT^underline i_kl}--\eqref{rR^i_jkl;m_1 dots m_r}. In this case there is nothing to prove about $\rGamma^{\underline{*}}_{\underline{*}*}$ and $\rGamma^*_{**}$.
	
	Now suppose that the stated property holds for some natural $r$, and let us show it for $r+1$. Since $|\nabla^mR|$, $|\nabla^m\sT|$ and $|\nabla^m\sA|$ are uniformly bounded on $M$ for all $m$, it follows by~\eqref{nabla-rnabla}--\eqref{rR(X,V)} that $|\rnabla^m\rT|$ and $|\rnabla^m\rR|$ are uniformly bounded on $M$ for all $m$. So all functions $\rT^{\underline{*}}_{\underline{*}\underline{*};\underline{*}\dots\underline{*}}$ and $\rR^{\underline{*}}_{\underline{*}\underline{*}\underline{*};\underline{*}\dots\underline{*}}$ are uniformly bounded, independently of $p$, because $s_1,\dots,s_n$ is orthonormal. By~\eqref{rT^underline i_kl}--\eqref{rR^i_jkl;m_1 dots m_r}, it follows that $\rT^{\underline{*}}_{**;*\dots*}$, $\rR^{\underline{*}}_{\underline{*}**;*\dots*}$, $\rT^*_{**;*\dots*}$ and $\rR^*_{***;*\dots*}$ are also uniformly bounded, independently of $p$. Using~\eqref{rR^i_jkl;I}, Lemma~\ref{l:rR^underline i_underline j kl;I} and the induction hypothesis, we get that the partial derivatives up to order $r$ of $\rT^{\underline{*}}_{\underline{*}\underline{*}}$, $\rR^{\underline{*}}_{\underline{*}\underline{*}\underline{*}}$, $\rT^*_{**}$ and $\rR^*_{***}$ are uniformly bounded, independently of $p$. Hence the partial derivatives up to order $r$ of $\rGamma^{\underline{*}}_{\underline{*}*}$ and $\rGamma^*_{**}$ are uniformly bounded, independently of $p$, by Corollaries~\ref{c:equations relating Cristoffel symbols and curvature, underlined},~\ref{c:equations relating the derivatives of Cristoffel symbols and curvature, underlined},~\ref{c:equations relating Cristoffel symbols and curvature, non-underlined} and~\ref{c:equations relating the derivatives of Cristoffel symbols and curvature, non-underlined}. Then, using again~\eqref{rR^i_jkl;I} and Lemma~\ref{l:rR^underline i_underline j kl;I}, we get uniform bounds, independent of $p$, also for the partial derivatives of order $r+1$ of $\rT^{\underline{*}}_{\underline{*}\underline{*}}$, $\rR^{\underline{*}}_{\underline{*}\underline{*}\underline{*}}$, $\rT^*_{**}$ and $\rR^*_{***}$. Therefore, by Corollary~\ref{c:(partial_alpha xi_ij)'(t,x)} and the induction hypothesis, there are some $C_1,C_2\ge0$, independent of $p$, such that
		\begin{align*}
			|(\partial'_I\xi_{ij})'(t,x_0)|
			&\le C_1\max_{\alpha,\beta,\ 0\le\tau\le t}|\partial'_I\xi_{\alpha\beta}(\tau,x_0)|+C_2\;,\\
			|(\partial_I\hat\xi_{ij})'(t,x)|
			&\le C_1\max_{\alpha,\beta,\ 0\le\tau\le t}|\partial_I\hat\xi_{\alpha\beta}(\tau,x)|+C_2\;,
		\end{align*} 
	for all $x_0\in B'\times\{0\}$, $x\in B'\times B''$, $I$, $i$ and $j$ with $|I|=r+1$. Using these inequalities and the argument of the end of the proof of \cite[Theorem~2.5(a1)]{Schick2001}, we get that $\partial_Ia^i_j(x)=\partial_I\hat\xi_{ij}(1,x)$ is uniformly bounded, independently of $p$. 
\end{proof}

\begin{rem}
	By Theorem~\ref{t:bounded geometry} and~\eqref{theta_XV}, if $\FF$ is of bounded geometry in our sense, then it satisfies Sanguiao's definition of bounded geometry  \cite[Definition~2.7]{Sanguiao2008}.
\end{rem}

Suppose from now on that $\FF$ is of bounded geometry, and consider the foliation charts $x_p:U_p\to B'\times B''$ given by Theorem~\ref{t:bounded geometry}. 

\begin{prop}\label{p: r}
	There is some $r>0$ such that the injectivity radius of $M$ is $\ge r$ and $B(p,r)\subset U_p$ for all $p\in M$; in particular, $M$ is of bounded geometry. 
\end{prop}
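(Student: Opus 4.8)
The plan is to read off from Theorem~\ref{t:bounded geometry} that, in the normal foliation charts $x_p\colon U_p\to B'\times B''$, the metric is uniformly controlled, and then to deduce the three assertions in turn. Since the families $(g^p_{ij})$ and $(g_p^{ij})$ lie in a bounded subset of $C^\infty(B'\times B'')$, there are constants $0<\lambda\le\Lambda$, independent of $p$, with $\lambda\,|\xi|^2\le g^p_{ij}\xi^i\xi^j\le\Lambda\,|\xi|^2$ for all $\xi\in\R^n$ and all $p$; that is, the metric is uniformly elliptic in these coordinates, and all partial derivatives of $g^p_{ij}$ are uniformly bounded as well. Because the Christoffel symbols of the Levi-Civita connection, the curvature $R$, and every covariant derivative $\nabla^mR$ are universal polynomial expressions in the $g^p_{ij}$, the $g_p^{ij}$ and their partial derivatives, it follows at once that $|\nabla^mR|$ is uniformly bounded on $M$ for every $m$.

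Next I would establish the inclusion $B(p,r)\subset U_p$ together with completeness. Let $\rho_0>0$ be the Euclidean distance from $0$ to the boundary of $B'\times B''$, namely the smaller of the two radii. By the lower ellipticity bound, the Riemannian length of any path read in coordinates is at least $\sqrt{\lambda}$ times its Euclidean length; hence any path issuing from $p$ and reaching $M\setminus U_p$ has Riemannian length at least $\sqrt{\lambda}\,\rho_0$. Thus $B(p,r')\subset U_p$ with $r'=\sqrt{\lambda}\,\rho_0$, uniformly in $p$. Moreover, for $q$ with $d(p,q)<r'$ no path of length $<r'$ can leave $U_p$, so all such paths stay in the chart and give $\sqrt{\lambda}\,d_{\mathrm{Eucl}}(0,x_p(q))\le d(p,q)$; consequently, for any $r<r'$, the closed ball $\overline{B(p,r)}$ is contained in the preimage under $x_p$ of the compact Euclidean ball of radius $r/\sqrt{\lambda}<\rho_0$, and is therefore compact. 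Since all closed balls of this one fixed radius $r$ are compact, $M$ is complete by the usual Cauchy-sequence argument.

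For the injectivity radius I would combine the bounds just obtained. Uniform ellipticity also gives a uniform volume lower bound: from $d(p,q)\le\sqrt{\Lambda}\,d_{\mathrm{Eucl}}(0,x_p(q))$ one gets $B(p,s)\supseteq x_p^{-1}(B_{\mathrm{Eucl}}(0,s/\sqrt{\Lambda}))$ for small $s$, whence $\vol(B(p,s))\ge\lambda^{n/2}\omega_n(s/\sqrt{\Lambda})^n$. Together with the uniform sectional curvature bound coming from $|\nabla^mR|\le C_m$ and the completeness just proved, the standard lower estimate for the injectivity radius in terms of a curvature bound and a volume bound (Cheeger--Gromov--Taylor) yields a positive number $r$, independent of $p$, with injectivity radius at $p$ at least $r$. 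Shrinking $r$ so that also $r<r'$, both displayed assertions hold simultaneously, and the uniform bounds on $|\nabla^mR|$ together with the positive injectivity radius show that $M$ is of bounded geometry.

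I expect the only genuine difficulty to be the injectivity radius estimate: the uniform charts and the distance bound prevent the coordinate neighbourhoods from collapsing, but they do not by themselves exclude short geodesic loops, and it is precisely for this that the volume lower bound and the curvature bound must be fed into the Cheeger--Gromov--Taylor estimate. The remaining steps---uniform ellipticity, polynomial control of $\nabla^mR$, the distance estimate, and completeness---are routine once the output of Theorem~\ref{t:bounded geometry} is in hand.
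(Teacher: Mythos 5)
Your proof is correct, but it follows a genuinely different route from the paper's. The paper's proof is soft: it takes a smooth open embedding $\phi$ of $B'\times B''$ into an auxiliary closed manifold $N$, glues the pushed-forward metrics $g_p=\phi_*x_{p*}(g|_{U_p})$ to a fixed background metric on $N$ by a partition of unity, observes via Theorem~\ref{t:bounded geometry} that the resulting family $\{g'_p\}$ is bounded in the $C^\infty$ Fr\'echet space of metrics on $N$, and then invokes the continuity of the injectivity radius on closed manifolds (Ehrlich, Sakai) to get a uniform $r>0$ at the point $q=\phi(0)$; the inclusion $B(p,r)\subset U_p$ comes from arranging the $g'_p$-ball at $q$ to lie in the region where $g'_p$ agrees with $g_p$. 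You instead argue quantitatively: uniform two-sided ellipticity $\lambda\le(g^p_{ij})\le\Lambda$ (which does follow from boundedness of both $(g^p_{ij})$ and $(g_p^{ij})$ in $C^\infty(B'\times B'')$), the length comparison giving $B(p,\sqrt{\lambda}\,\rho_0)\subset U_p$ and compactness of closed balls of a fixed radius, hence completeness, a uniform volume lower bound for small balls, and finally the Cheeger--Gromov--Taylor estimate to rule out short geodesic loops --- which, as you rightly note, is the one point that chart control alone cannot settle. Both arguments are sound; yours buys explicit, in principle computable constants and avoids the auxiliary closed manifold and the compactness-of-metrics step, while the paper's is shorter modulo the cited continuity theorems and avoids the CGT machinery. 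One small economy available to you: the uniform bounds on $|\nabla^mR|$ need not be re-derived from the charts, since they are part of the standing hypothesis that $\FF$ is of bounded geometry (Definition~\ref{d:bounded geometry}), although your re-derivation is harmless and keeps the argument self-contained from the chart data alone.
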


\begin{proof}
	If $B'\times B''$ were a closed manifold, then the result would follow immediately from the well-known  continuity of the injectivity radius on closed manifolds \cite{Ehrlich1974,Sakai1983}. Since $B'\times B''$ is not compact, we embed it into a closed manifold to apply that result.
	
	Let $\phi:B'\times B''\to N$ be a smooth open embedding into any fixed closed manifold $N$, necessarily of dimension $n$. Let $q=\phi(0)$ and $V=\phi(B'\times B'')$, and let $W$ be an open neighborhood of $q$ in $N$ with $\overline{W}\subset V$. Let $\{\lambda,\mu\}$ be a partition of unity of $N$ subordinated to the open covering $\{V,N\setminus\overline{W}\}$. For each $p\in M$, let $g_p=g|_{U_p}$, which is considered as a metric in $V$ via $\phi x_p$. Fix any metric $g'$ on $N$. According to Theorem~\ref{t:bounded geometry}, the metrics $g'_p=\lambda g_p+\mu g'$ form a bounded subset in the Fr\'echet space of Riemannian metrics on $N$ with the $C^\infty$ topology. By the continuity of the injectivity radius on closed manifolds  \cite{Ehrlich1974,Sakai1983}, it follows that there is some $r>0$ such that the injectivity radius of $(N,g'_p)$ at $q$ is $\ge r$ for all $p\in M$. Moreover we can assume that the open $g'_p$-ball of center $q$ and radius $r$ is contained in $W$ for all $p$. Then the result follows.
\end{proof}

Let us use the notation $\partial_{p,i}$, $\partial_{p,I}$ and $\rGamma^i_{p,jk}$ to indicate that $\partial_i$, $\partial_I$ and $\rGamma^i_{jk}$ are defined by $x_p$.

\begin{prop}\label{p: changes of normal foliation coordinates}
	For each $m\in\N$, there is some $C_m>0$ such that $|\partial_I(x_qx_p^{-1})|\le C_m$ on $x_p(U_p\cap U_q)$ for all $p,q\in M$ and multi-indices $I$ with $|I|\le m$. 
\end{prop}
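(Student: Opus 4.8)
The plan is to follow Schick's argument for the transition maps of ordinary normal charts \cite{Schick2001}, reducing the statement to a version of the inverse function theorem with uniform control. The transition map is $\Phi=x_qx_p^{-1}=x_q\circ\kappa_p$, and its inverse is $\Phi^{-1}=x_p\circ\kappa_q$. The point is that $\tilde\kappa_q:=x_p\circ\kappa_q=\Phi^{-1}$ is \emph{directly computable as a forward construction}: to evaluate $\tilde\kappa_q(X)$ one runs the $\rnabla$-geodesics and $\rnabla$-parallel transports defining $\kappa_q(X)$ entirely inside the $x_p$-chart, so that the geodesic ordinary differential equations are integrated in $x_p$-coordinates and the answer is produced in $x_p$-coordinates, with no inverse exponential involved. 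Thus the strategy is: (i) prove that $\tilde\kappa_q$ has uniform $C^m$ bounds, independent of $p$ and $q$; (ii) prove that its differential is uniformly bounded below on the fixed domain; and (iii) conclude that $\Phi=\tilde\kappa_q^{-1}$ has the asserted uniform $C^m$ bounds, since for a map whose differential is uniformly invertible the $C^m$ norm of its inverse is controlled by the $C^m$ norm of the map.

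For (i), recall that $\kappa_q$ is assembled from the adapted exponential $\rexp$ and the $\rnabla$-parallel transport $\rP$, both of which, read in $x_p$-coordinates, are governed by the Christoffel symbols $\rGamma^i_{p,jk}$; the initial data at $q$ are transcribed through the frame-change coefficients $a^i_j$, $b^i_j$. In the proof of Theorem~\ref{t:bounded geometry} it was established that the partial derivatives of all orders of $\rGamma^i_{p,jk}$, $a^i_j$ and $b^i_j$ are uniformly bounded on $B'\times B''$, independently of $p$, so these coefficients lie in a bounded subset of $C^\infty$. By Proposition~\ref{p: r} the integrations run over intervals of fixed length; hence the standard estimates for smooth dependence of solutions of ordinary differential equations on initial conditions, which depend only on the $C^m$ bounds of the coefficients and the length of the interval, yield uniform $C^m$ bounds for $\tilde\kappa_q$. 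For (ii), the differential of $\tilde\kappa_q$ at the centre is a frame change (invertible and uniformly bounded), because $\kappa_{q*}=\id$ there by Proposition~\ref{p:kappa_*=id}; and since $(a^i_j)=(b^i_j)^{-1}$ with both families bounded, equivalently $g^p_{ij}$ and $g_p^{ij}$ are bounded, the differential remains uniformly nondegenerate throughout $B'\times B''$. Applying the uniform inverse function theorem then completes (iii).

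The hard part will be keeping every geodesic used to compute $\kappa_q(X)$ inside $U_p$, where the $x_p$-coordinate bounds are available: although the endpoints lie in $U_p\cap U_q\subset U_p$, the $\rnabla$-geodesic joining $q$ to such an endpoint may leave $U_p$, so the construction cannot always be read in a single chart. This is controlled by Proposition~\ref{p: r}: overlapping charts have uniformly bounded diameters, so $U_p\cap U_q\neq\emptyset$ forces $d(p,q)$ to be uniformly bounded, and one propagates the estimate along a uniformly bounded number of overlapping normal foliation charts. A cleaner alternative, exploiting Proposition~\ref{p:kappa is foliated}, is to write $\Phi(x',x'')=(\bar\Phi(x'),\Psi(x',x''))$: the transverse part $\bar\Phi$ is then a transition of normal charts for the bounded-geometry transverse metric $\check g$, and each $\Psi(x',\cdot)$ is a transition of normal charts of a leaf, both covered by the classical estimates of Eichhorn and Schick \cite{Eichhorn1991,Schick2001}; the genuinely new content is confined to the transverse derivatives of $\Psi$, which are again controlled by the uniform $C^\infty$ bounds on $\rGamma^i_{p,jk}$, $a^i_j$ and $b^i_j$ through the adapted connection.
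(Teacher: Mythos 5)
Your proposal is correct and takes essentially the same approach as the paper: the paper likewise reads the ODEs defining $\kappa_q$ (the $\rnabla$-geodesics and $\rnabla$-parallel transport, with coefficients $\rGamma^i_{p,jk}$ uniformly bounded via Theorem~\ref{t:bounded geometry}) in the $x_p$-chart and then invokes \cite[Corollary~A.25]{Schick1996}, which packages exactly your ODE-dependence and uniform inverse function steps. Your fix for geodesics leaving $U_p$ is also the paper's: it first treats the case $q\in U_p$, obtaining bounds only around $x_p(q)$, and then handles an arbitrary overlap by factoring $x_qx_p^{-1}=(x_qx_{q_0}^{-1})(x_{q_0}x_p^{-1})$ through a chart centered at a point $q_0\in U_p\cap U_q$ and applying the chain rule.
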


\begin{proof}
	We adapt the proof of \cite[Theorem~A.22]{Schick1996}. 
	
	Consider first the case where $q\in U_p$. By~\eqref{sV, sH} and the expression of the Christoffel symbols of $\nabla$ in terms of the metric coefficients \cite[Chapter~2, page~56]{doCarmo1992}, the functions $\rGamma_{p,jk}^i$ are given by expressions involving the functions $g_p^{ij}$, $g^p_{ij}$ and their partial derivatives. Moreover the $\rnabla$-geodesics in $U_p$ are given by a second order ODE, and the $\rnabla$-parallel transport along these $\rnabla$-geodesics is given by a first order ODE, both of them involving the functions $\rGamma^i_{p,jk}$. Then, by Theorem~\ref{t:bounded geometry} and \cite[Corollary~A.25]{Schick1996}, there is some $C_{0,m}>0$, independent of $p$ and $q$, such that $|\partial_{p,I}(x_qx_p^{-1})|\le C_{0,m}$ around $x_p(q)$ for all multi-index $I$ with $|I|\le m$.
	
	In the arbitrary case, given any $q_0\in U_p\cap U_q$, we apply the above case to $x_qx_{q_0}^{-1}$ and $x_{q_0}x_p^{-1}$, and use the chain rule with the expression $x_qx_p^{-1}=(x_qx_{q_0}^{-1})(x_{q_0}x_p^{-1})$ around $x_p(q_0)$.	
\end{proof}

For $m\in\N$, let $C_b^m(M)$ be the set of $C^m$ functions $f$ on $M$ such that there is some $C_m\ge0$ so that $|\nabla^kf|\le C_m$ if $k\le m$. Let $\| f\|_{C_b^m}$ be the smallest constant $C_m$ satisfying this condition with each $f\in C_b^m(M)$. This defines a norm $\|\ \|_{C_b^m}$ on $C_b^m(M)$, obtaining a Banach space. Observe that $C^{m+1}_b(M)\subset C^m_b(M)$, continuously, for each $m$, and let $C^\infty_b(M):=\bigcap_mC_b^m(M)$ with the corresponding Fr\'echet topology. By Theorem~\ref{t:bounded geometry}, the norm $\|\ \|_{C_b^m}$ is equivalent to the norm $\|\ \|'_{C_b^m}$, where $\| f\|'_{C_b^m}$ is the smallest constant $C'_m\ge0$ such that $|\partial_{p,I}f|\le C'_m$ on $U_p$ for all $p\in M$ and multi-indices $I$ with $|I|\le m$. By Proposition~\ref{p: changes of normal foliation coordinates},  another equivalent norm is defined in the same way by taking only a subset of points $p$ so that the corresponding sets $U_p$ cover $M$.

Let $r'_0$ and $r''_0$ denote the Euclidean radii of the balls $B'$ and $B''$. For $0<r'\le r'_0$ and $0<r''\le r''_0$, let $B'_{r'}$ and $B''_{r''}$ denote the balls in $\R^{n'}$ and $\R^{n''}$ centered at the origin with radii $r'$ and $r''$, respectively, and set $U_{p,r',r''}=x_p^{-1}(B'_{r'}\times B''_{r''})$. Observe that
	\begin{equation}\label{U_p,r',r'' subset B(p,r'+r'')}
		U_{p,r',r''}\subset B(p,r'+r'')\;.
	\end{equation}

\begin{prop}\label{p: partition of unity}
	Let $r',r''>0$ with $2r'\le r'_0$ and $2r''\le r''_0$. Then the following properties hold:
		\begin{itemize}
	
			\item[(i)] There is a collection of points $p_i$ in $M$, and there is some $N\in\N$ such that the sets $U_{p_i,r',r''}$ cover $M$, and each intersection of $N+1$ sets $U_{p_i,2r',2r''}$ is empty.
	
			\item[(ii)] There is a partition of unity $\{\phi_i\}$ subordinated to the open covering $\{U_{p_i,2r',2r''}\}$, which is bounded in the Fr\'echet space $C_b^\infty(M)$.
		
		\end{itemize}
\end{prop}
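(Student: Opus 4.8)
The plan is to build the covering in (i) from a maximal separated net and to extract the uniform local finiteness from a volume--packing estimate, and then to manufacture the partition of unity in (ii) by transporting a single fixed bump function through the charts $x_{p_i}$ and normalizing.

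First I would fix the metric data. By Theorem~\ref{t:bounded geometry} the coefficients $g^p_{ij}$ and $g_p^{ij}$ are uniformly bounded, so in each chart $x_p$ the metric $g$ is uniformly equivalent to the Euclidean metric and $\sqrt{\det(g^p_{ij})}$ is bounded above and away from zero, uniformly in $p$. Combined with Proposition~\ref{p: r} (which gives $B(p,r)\subset U_p$ and bounded geometry of $M$), this yields two facts used throughout: (a) there is $\epsilon>0$, independent of $p$, with $B(p,\epsilon)\subset U_{p,r',r''}$ (a Riemannian $\epsilon$-ball lands in a Euclidean ball of radius $<\min\{r',r''\}$ because $g$ is uniformly bounded below, and it stays inside $U_p$ once $\epsilon\le r$); and (b) for each fixed radius $\rho\le r$ the volume $\vol B(p,\rho)$ lies between two positive constants independent of $p$.

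For (i), I would choose by Zorn's lemma a maximal $\epsilon$-separated family $\{p_i\}\subset M$ (pairwise distances $\ge\epsilon$). Maximality forces the balls $B(p_i,\epsilon)$ to cover $M$, hence the sets $U_{p_i,r',r''}\supset B(p_i,\epsilon)$ cover $M$ by (a). For the overlap bound, suppose a point $y$ lies in $N+1$ of the enlarged sets $U_{p_{i_k},2r',2r''}$. By~\eqref{U_p,r',r'' subset B(p,r'+r'')} each such set is contained in $B(p_{i_k},2r'+2r'')$, so all the $p_{i_k}$ lie in a single ball $B(p_{i_0},4(r'+r''))$. Since the $p_{i_k}$ are $\epsilon$-separated, the balls $B(p_{i_k},\epsilon/2)$ are pairwise disjoint and contained in $B(p_{i_0},4(r'+r'')+\epsilon/2)$; comparing volumes via (b) bounds $N+1$ by a constant independent of $i_0$ and of the family, which fixes $N$.

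For (ii), fix once and for all a function $\psi\in C^\infty(B'\times B'')$ with $\psi\equiv1$ on $\overline{B'_{r'}\times B''_{r''}}$ and $\supp\psi\subset B'_{2r'}\times B''_{2r''}$ (possible since $2r'\le r'_0$ and $2r''\le r''_0$), and set $\tilde\phi_i=\psi\circ x_{p_i}$, extended by zero off $U_{p_i}$. In the chart $x_{p_i}$ this is the fixed function $\psi$, so using the equivalence of $\|\cdot\|_{C_b^m}$ with the coordinate norm together with the uniformly bounded coordinate changes of Proposition~\ref{p: changes of normal foliation coordinates}, the family $\{\tilde\phi_i\}$ is bounded in $C_b^\infty(M)$. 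The sum $\Phi=\sum_i\tilde\phi_i$ is uniformly locally finite by (i), satisfies $\Phi\ge1$ everywhere (because $\tilde\phi_i\equiv1$ on $U_{p_i,r',r''}$ and these cover $M$), and is bounded in $C_b^\infty(M)$ (at most $N$ nonzero, uniformly bounded terms near each point). Since $\Phi\ge1$, the reciprocal $1/\Phi$ is again bounded in $C_b^\infty(M)$, its covariant derivatives being universal polynomials in those of $\Phi$ divided by powers of $\Phi\ge1$. Then $\phi_i=\tilde\phi_i/\Phi$ is the desired partition of unity: $\sum_i\phi_i=1$, $\supp\phi_i\subset U_{p_i,2r',2r''}$, and $\{\phi_i\}$ is bounded in $C_b^\infty(M)$ as a product of two $C_b^\infty$-bounded families.

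The main obstacle is uniformity, concentrated in two places: the volume--packing estimate (b) that makes $N$ independent of the point and of $p$, and the passage from ``fixed in one chart'' to ``uniformly bounded in $C_b^\infty(M)$'', which is exactly where Theorem~\ref{t:bounded geometry} and Proposition~\ref{p: changes of normal foliation coordinates} (the equivalence of the intrinsic norm with a coordinate norm whose transition derivatives are uniformly bounded) are essential. The remaining steps are the standard net-and-normalization bookkeeping.
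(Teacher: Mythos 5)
Your proposal is correct and follows essentially the same route as the paper: for (i), a maximal separated net (equivalently, the paper's maximal family of disjoint balls $B(p_i,r_1/2)$) plus a uniform volume--packing comparison, and for (ii), a fixed bump function transported through the charts $x_{p_i}$ and normalized by its sum, with Theorem~\ref{t:bounded geometry} and Proposition~\ref{p: changes of normal foliation coordinates} supplying the uniform $C_b^\infty$-bounds. The only slip is cosmetic: your auxiliary fact (b) is stated for radii $\rho\le r$, whereas the packing step needs an upper volume bound for $B(p_{i_0},4(r'+r'')+\epsilon/2)$, whose radius may exceed $r$; this holds for any fixed radius by bounded geometry of $M$ (Proposition~\ref{p: r}), which is exactly what the paper's appeal to \cite[Lemma~3.20]{Schick1996} provides.
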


\begin{proof}
	By Proposition~\ref{p: r}, there is some $r_1>0$ such that $B(p,r_1)\subset U_{p,r',r''}$ for all $p\in M$. Take a family of points $p_i$ in $M$ so that $\{B(p_i,r_1/2)\}$ is a maximal collection of disjoint balls of radius $r_1/2$, whose existence is given by Zorn's lemma. Then $\{B(p_i,r_1)\}$ covers $M$, and therefore $\{U_{p_i,r',r''}\}$ also covers $M$. Let $r_2=2r'+2r''$. By Proposition~\ref{p: r} and \cite[Lemma~3.20]{Schick1996}, there is some $N\in\N$ such that
		\begin{equation}\label{vol}
			\vol B(p,r_1+r_2)\le N\vol B(p,r_1/2)
		\end{equation}
	for all $p\in M$. If some point $p$ belongs to $N+1$ sets $U_{p_i,2r',2r''}$, then it belongs to $N+1$ balls $B(p_i,r_2)$ by~\eqref{U_p,r',r'' subset B(p,r'+r'')}. It follows that $B(p,r_1+r_2)$ contains $N+1$ of the disjoint balls $B(p_i,r_1/2)$, which contradicts~\eqref{vol}. This shows~(i).
	
	To prove~(ii), take $C^\infty$ functions, $\rho'$ on $B'$ and $\rho''$ on $B''$, such that $0\le\rho'\le1$, $0\le\rho''\le1$, $\supp\rho'\subset B'_{2r'}$, $\supp\rho''\subset B''_{2r''}$, $\rho'=1$ on $B_{r'}$, and $\rho''=1$ on $B_{r''}$. Let $\psi_i$ be the $C^\infty$ function on $M$ supported in $U_{p_i}$ such that $\psi_ix_{p_i}^{-1}(x',x'')=\rho'(x')\rho''(x'')$ for all $(x',x'')\in B'\times B''$. Clearly, $\{\psi_i\}$ is a bounded subset of $C^\infty_b(M)$, and, by~(i), $\psi=\sum_i\psi_i$ is a positive function in $C^\infty_b(M)$, as well as $1/\psi$. Hence~(ii) follows with $\phi_i=\psi_i/\psi$.
\end{proof}

\bibliographystyle{amsplain}


\providecommand{\bysame}{\leavevmode\hbox to3em{\hrulefill}\thinspace}
\providecommand{\MR}{\relax\ifhmode\unskip\space\fi MR }
\providecommand{\MRhref}[2]{%
  \href{http://www.ams.org/mathscinet-getitem?mr=#1}{#2}
}
\providecommand{\href}[2]{#2}

\end{document}